\newtheorem*{theorem*}{Theorem}
\newtheorem{theorem}{Theorem}
\newtheorem{lemma}[theorem]{Lemma}
\newtheorem{proposition}[theorem]{Proposition}
\newtheorem{corollary}[theorem]{Corollary}
\theoremstyle{definition}
\newtheorem{definition}[theorem]{Definition}
\theoremstyle{remark}
\newtheorem{remark}[theorem]{Remark}
\newtheorem{question}[theorem]{Question}
\newtheorem{example}[theorem]{Example}
\newtheorem{conjecture}[theorem]{Conjecture}
\numberwithin{theorem}{section}
\newcommand\cP{{\mathcal P}}
\newcommand\CC{{\vmathbb C}}
\newcommand\FF{{\vmathbb F}}
\newcommand\HH{{\vmathbb H}}
\newcommand\KK{{\vmathbb K}}
\newcommand\NN{{\vmathbb N}}
\newcommand\PP{{\vmathbb P}}
\newcommand\QQ{{\vmathbb Q}}
\newcommand\RR{{\vmathbb R}}
\renewcommand\SS{{\vmathbb S}}
\newcommand\TT{{\vmathbb T}}
\newcommand\WW{{\vmathbb W}}
\newcommand\ZZ{{\vmathbb Z}}
\newcommand\ba{{\bm a}}
\newcommand\bb{{\bm b}}
\newcommand\bc{{\bm c}}
\newcommand\bd{{\bm d}}
\newcommand\be{{\bm e}}
\newcommand\bu{{\bm u}}
\newcommand\bv{{\bm v}}
\newcommand\bw{{\bm w}}
\newcommand\by{{\bm y}}
\newcommand\bz{{\bm z}}
\newcommand\bX{{\bm X}}
\newcommand\blambda{{\bm \lambda}}
\newcommand{\0}{{\vmathbb 0}}
\newcommand{\1}{{\vmathbb 1}}
\newcommand{\zerovec}{{\underline{\vmathbb 0}}}
\newcommand\SetOf[2]{\left\{\left.#1\vphantom{#2}\ \right|\ #2\vphantom{#1}\right\}}
\newcommand{\overbar}[1]{\mkern 1.5mu\overline{\mkern-2.5mu#1\mkern-1.5mu}\mkern 2.5mu}
\newcommand\pseries[1]{#1\{\!\{t\}\!\}}
\newcommand\hseries[2]{#1\llbracket t^{#2} \rrbracket}
\newcommand\fp[2]{\big((#1) \; , \, (#2)\big)} 
\DeclareMathOperator{\lc}{lc}
\DeclareMathOperator{\mult}{mult}
\DeclareMathOperator{\sgn}{sgn}
\DeclareMathOperator{\ph}{ph}
\DeclareMathOperator{\val}{val}
\DeclareMathOperator{\sval}{sval}
\DeclareMathOperator{\fval}{fval}
\DeclareMathOperator{\phval}{phval}
\DeclareMathOperator{\trop}{trop}
\DeclareMathOperator{\ftrop}{ftrop}
\DeclareMathOperator{\aff}{aff}
\DeclareMathOperator{\initial}{in}
\title{Geometry of tropical extensions of hyperfields}
\author{James Maxwell\thanks{School of Mathematics, University of Bristol. E-mail: \texttt{james.maxwell@bristol.ac.uk}} \and Ben Smith\thanks{School of Mathematical Sciences, Lancaster University. E-mail: \texttt{b.smith9@lancaster.ac.uk}}}
\begin{document}
\maketitle

\begin{abstract}
We study the geometry of tropical extensions of hyperfields, including the ordinary, signed and complex tropical hyperfields.
We introduce the framework of `enriched valuations' as hyperfield homomorphisms to tropical extensions, and show that a notable family of them are relatively algebraically closed.
Our main results are hyperfield analogues of Kapranov's theorem and the Fundamental theorem of tropical geometry.
Utilising these theorems, we introduce fine tropical varieties and prove a structure theorem for them in terms of their initial ideals.
\end{abstract}

\section{Introduction}

Tropical geometry is a branch of combinatorial algebraic geometry that offers two approaches to studying algebraic varieties over valued fields $(\FF, \val)$.
Given an ideal $I \subseteq \FF[X_1, \dots, X_n]$, one approach is to study the variety $V(I)$ by considering its coordinate-wise image $\val(V(I)) \subseteq \RR^n$ in the valuation map.
An alternative approach is to consider $\val_*(I)$, the collection of polynomials over the tropical semiring $(\RR, \max, +)$ obtained by valuating the coefficients of all polynomials in $I$.
Given this tropicalised ideal, one can obtain its corresponding tropical variety $V(\val_*(I))$ in a more combinatorial manner.
The Fundamental theorem of tropical geometry~\cite{MS:21} states that $\val(V(I))$ and $V(\val_*(I))$ are equal, and so these two approaches are equivalent.

Hyperfields are structures that generalise fields by allowing the addition operation to be multivalued, i.e. the sum $a\boxplus b$ may be a set rather than a singleton.
Introduced by Krasner in the 50s~\cite{Kra:56, Kra:83}, they rose to prominence within tropical geometry via the articles of Viro~\cite{Viro:10,Viro:11} and the introduction of matroids over hyperfields~\cite{Baker+Bowler:18}.
Viro noted that enriching the tropical semiring with a hyperfield structure allowed tropical varieties to be defined as genuine algebraic varieties over the tropical hyperfield $\TT$.
This approach via hyperfields gives a number of other advantages.
For example, hyperfields offer a very flexible framework for constructing `tropical-like' spaces known as \emph{tropical extensions}.
Moreover, valuations can be rephrased as hyperfield homomorphisms to the tropical hyperfield.
This perspective allows one to define a wider family of `valuation-like' maps as homomorphisms to tropical extensions, that we refer to as \emph{enriched valuations}.
Our goal in this article is to effectively layout the framework of tropical extensions and enriched valuations, and to extend a number of key theorems of tropical geometry to this setting.

\subsection{Our results}
Given a hyperfield $\HH$ and an ordered abelian group $\Gamma$, one can define a new hyperfield $\HH \rtimes \Gamma$ called the \emph{tropical extension} of $\HH$ by $\Gamma$. 
This can be viewed as a tropical hyperfield with `coefficients' in $\HH$: see Definition~\ref{def:tropical+extension} for a precise definition.
Key examples of this construction include the tropical hyperfield $\TT \cong \KK \rtimes \RR$, the tropical extension of the Krasner hyperfield $\KK$ by $(\RR,+)$, and the signed tropical hyperfield $\SS \rtimes \RR$, the tropical extension of the hyperfield of signs $\SS$ by $(\RR,+)$.
Tropical extensions of semirings were pioneered in the articles of Akian, Gaubert and Gutermann~\cite{AGG:09, AGG:14} but have proved useful tools for studying hyperfields, such as proving classification results~\cite{BowlerSu:21}.
Moreover, they provide a framework for defining enriched valuations, which we lay out in Section~\ref{subsec:homo+val}.
These enriched valuations are hyperfield homomorphisms from a field $\FF$ to a tropical extension $\HH \rtimes \Gamma$ where $\HH$ records additional information about an element beyond its order.
The prototypical example of an enriched valuation is the signed valuation $\sval\colon\FF \rightarrow \SS \rtimes \Gamma$ that also records the sign of an element. 

A key property of a valuation map $\val \colon \FF \rightarrow \TT$ is that it is \emph{relatively algebraically closed}: for each polynomial $p \in \FF[X]$ such that the induced tropical polynomial $\val_*(p) \in \TT[X]$ has a tropical root $b \in \TT$, there exists a lift $a \in \val^{-1}(b)$ that is a root of $p$.
This property gives rise to a proof of Kapranov's theorem: for any polynomial $p \in \FF[X_1, \dots, X_n]$, the tropical hypersurface $V(\val_*(p))$ and the image $\val(V(p))$ of the algebraic hypersurface $V(p)$ coincide.
It has also been well studied in commutative algebra and $p$-adic analysis as Hensel's Lemma.
It was formulated for hyperfields in~\cite{Maxwell:21} as a tool for generalising Kapranov's theorem to all relatively algebraically closed maps $f \colon \HH \rightarrow \TT$.
Our first main theorem is showing that one can extend Kapranov's theorem to relatively algebraically closed maps between arbitrary hyperfields, not just those onto $\TT$.

To effectively state this, we briefly introduce some necessary concepts of algebraic geometry over hyperfields: see Section~\ref{sec:hyperfields} for full details.
Given a polynomial $p \in \HH[X_1, \dots, X_n]$ over a hyperfield, its evaluation at a point is a subset of $\HH$ as sums are multivalued.
We define its associated \emph{hypersurface} $V(p)$ to be the set of points $\ba \in \HH^n$ such that $\0 \in p(\ba)$.
If $p \in \HH[X]$ is a univariate polynomial, then we refer to $V(p)$ as the \emph{roots} of $p$.
Given a homomorphism of hyperfields $f \colon \HH_1 \rightarrow \HH_2$, a polynomial $p \in \HH_1[X_1, \dots, X_n]$ induces a polynomial $f_*(p) \in \HH_2[X_1, \dots, X_n]$ called the \emph{push-forward} of $p$.
We say a homomorphism is \emph{relatively algebraically closed} (RAC) if for all univariate polynomials $p \in \HH[X]$, the roots of its push-forward $f_*(p)$ pull-back to roots of $p$ (Definition~\ref{def:RAC}).

\begin{theorem*}[\ref{thm:Kapranov}]
Let $\HH_1, \HH_2$ be arbitrary hyperfields and $f\colon \HH_1 \rightarrow \HH_2$ a relatively algebraically closed homomorphism.
Then, for any polynomial $p \in \HH_1[X_1, \dots, X_n]$,
\[
V(f_*(p)) = \SetOf{(f(a_1), \dots, f(a_n)) \in \HH_2^n}{\ba \in V(p)} \, .
\]
\end{theorem*}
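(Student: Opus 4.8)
The plan is to prove the two inclusions separately. The containment $\supseteq$ is a formal consequence of $f$ being a hyperfield homomorphism and does not use relative algebraic closedness, whereas $\subseteq$ is the substance of the theorem and is exactly where Definition~\ref{def:RAC} enters.

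For the inclusion $\supseteq$, the key point is that $f$ commutes with evaluation of hyperpolynomials up to containment: from $f(xy) = f(x)f(y)$ and $f(x \boxplus y) \subseteq f(x) \boxplus f(y)$, an induction on the number of summands (justified by associativity of the hypersum) gives $f\big(\bigboxplus_i x_i\big) \subseteq \bigboxplus_i f(x_i)$, and applying this to the monomial expansion $p(\ba) = \bigboxplus_{\bu} c_{\bu}\ba^{\bu}$ yields $f\big(p(\ba)\big) \subseteq \bigboxplus_{\bu} f(c_{\bu}) f(\ba)^{\bu} = f_*(p)\big(f(\ba)\big)$. Hence $\ba \in V(p)$, i.e. $0 \in p(\ba)$, forces $0 = f(0) \in f_*(p)(f(\ba))$, so $(f(a_1),\dots,f(a_n)) \in V(f_*(p))$.

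For the inclusion $\subseteq$ I would induct on the number of variables $n$. The case $n = 1$ is precisely Definition~\ref{def:RAC}: a root $b$ of $f_*(p)$ has a lift $a \in f^{-1}(b) \cap V(p)$, so $b = f(a)$ lies in the right-hand side. For the inductive step, group the monomials of $p$ by the power of $X_n$, writing $p = \bigboxplus_{k} X_n^k\, q_k$ with each $q_k \in \HH_1[X_1,\dots,X_{n-1}]$ an honest polynomial. Given $\bb = (b_1,\dots,b_n) \in V(f_*(p))$, the relation $0 \in f_*(p)(\bb) = \bigboxplus_k b_n^k\, f_*(q_k)(b_1,\dots,b_{n-1})$ unwinds, via the definition of the iterated hypersum of these sets, to the existence of representatives $\sigma_k \in f_*(q_k)(b_1,\dots,b_{n-1})$ with $0 \in \bigboxplus_k b_n^k\sigma_k$. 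I would then feed $(b_1,\dots,b_{n-1})$ together with the tuple $(\sigma_k)_k$ into the inductive hypothesis to obtain a single point $\ba' \in \HH_1^{n-1}$ with $f(\ba') = (b_1,\dots,b_{n-1})$ and elements $\tau_k \in q_k(\ba')$ with $f(\tau_k) = \sigma_k$ for all $k$ simultaneously. The univariate polynomial $\bigboxplus_k \tau_k X_n^k$ over $\HH_1$ then has $f_*$ equal to $\bigboxplus_k \sigma_k X_n^k$, which vanishes at $b_n$; a second appeal to Definition~\ref{def:RAC} gives $a_n \in f^{-1}(b_n)$ with $0 \in \bigboxplus_k \tau_k a_n^k$, and since $\tau_k \in q_k(\ba')$ this hypersum is contained in $p(\ba)$ for $\ba := (\ba', a_n)$. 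Thus $\ba \in V(p)$ and $f(\ba) = \bb$, closing the induction.

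The main obstacle is that partial evaluation of a hyperpolynomial is genuinely multivalued: substituting elements into only some of the variables returns a tuple of coefficient-sets rather than a hyperpolynomial, so one cannot recurse naively. This is why the witnesses $\sigma_k$ must be carried along, and consequently why the inductive hypothesis has to be a strengthened, \emph{simultaneous} version of the statement --- a relatively-algebraically-closed property for a finite family of polynomials sharing one common point --- whose $n = 1$ case must itself be extracted from Definition~\ref{def:RAC} (and along the way one tracks that each $\sigma_k$ lies in the image of $f$, which is implicit in relative algebraic closedness once one knows that roots lift). In the classical setting over a field this wrinkle is invisible, since partial evaluations are single-valued; the novelty in the hyperfield case is precisely this bookkeeping, and it is the step I expect to require the most care.
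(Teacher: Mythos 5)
The inclusion $\supseteq$ is exactly Lemma~\ref{pushforwardgeneral} and your argument for it is correct. The issue is with your induction for the inclusion $\subseteq$.

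Your inductive step needs a \emph{simultaneous} lifting statement for a finite family of $(n{-}1)$-variable polynomials sharing one common point: given $q_0,\dots,q_m$, a point $\bb'$, and values $\sigma_k \in f_*(q_k)(\bb')$, you need a \emph{single} $\ba' \in f^{-1}(\bb')$ together with witnesses $\tau_k \in q_k(\ba')$ satisfying $f(\tau_k) = \sigma_k$ for \emph{all} $k$ at once. You acknowledge this and say its $n=1$ case "must itself be extracted from Definition~\ref{def:RAC}," but this is precisely where the argument breaks down. Definition~\ref{def:RAC} is only a statement about lifting a root (the value $\0$) of one polynomial at a time. For a single polynomial and a single nonzero value one can indeed manufacture a root-lifting problem (scale the variable so $b=\1_2$, then consider $q(X) \boxplus (-\tilde\sigma)X^N$ for large $N$ and use reversibility), but for several polynomials the obvious combinations fail: for instance if you merge them into $Q(X) = \bigboxplus_k X^{M_k}\hat q_k(X)$ with well-separated $M_k$, RAC only hands you an $a$ with $\0 \in \bigboxplus_k a^{M_k}\hat q_k(a)$, and a hypersum of sets can contain $\0$ without any individual summand containing $\0$ --- so you cannot conclude $\0 \in \hat q_k(a)$ for each $k$ with the same $a$. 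Nothing in the definition of RAC forces the lifts for different $q_k$ to coincide, and this simultaneity is exactly the content you would have to prove. As written, the inductive hypothesis you invoke is strictly stronger than anything you've justified, so the induction doesn't close.

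The paper avoids this entirely by making a one-shot reduction to the univariate case rather than inducting on variables. Fixing lifts $\lambda_i \in f^{-1}(b_i)$ and a generic exponent vector $D$, the substitution $\phi_D(x) = (\lambda_1 x^{d_1},\dots,\lambda_n x^{d_n})$ sends $p$ to a genuine univariate polynomial $\phi_D^*(p) \in \HH_1[X]$ (genericity of $D$ ensures all exponents $D\cdot I$ are distinct, so no set-valued coefficients appear), and the commuting square with $\psi_D$ on the target side shows $\1_2$ is a root of $f_*(\phi_D^*(p))$. One application of Definition~\ref{def:RAC} then produces $\tilde a$ with $f(\tilde a)=\1_2$ and $\0_1\in\phi_D^*(p)(\tilde a)$, and $\ba = \phi_D(\tilde a)$ is the desired lift. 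This sidesteps both the multivaluedness of partial evaluation and the need for any simultaneous lifting, which is why it works where the variable-by-variable induction stalls.
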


Prior to this work, all examples of RAC maps were either of the form $\HH \rightarrow \TT$ or $\HH \rightarrow \KK$. 
To motivate generalising Kapranov's theorem, we exhibit a number of new RAC maps.
We first show that any homomorphism from an algebraically closed field to a \emph{stringent hyperfield}, i.e. addition is multivalued only when summing additive inverses, is necessarily RAC (Corollary~\ref{cor:stringent+rac}).
Coupled with Bowler and Su's classification of stringent hyperfields~\cite{BowlerSu:21}, this covers all valuations and a new family of maps that we call \emph{fine valuations}, one class of enriched valuations.
We also show that RAC maps are closed under two operations, tropical extension and quotient (Propositions~\ref{prop:extension+rac} and~\ref{prop:RAC-factor-RAC}).
In particular, the former generalises the non-trivial RAC map in~\cite{Maxwell:21} from the tropical complex hyperfield to $\TT$.

One might hope to extend Kapranov's theorem to the Fundamental theorem for all RAC maps directly by considering ideals rather than single polynomials.
However, this is rather more subtle for hyperfields than over fields.
Attempting to imbue the set of polynomials $\HH[X_1, \dots, X_n]$ over a hyperfield $\HH$ with additional algebraic structure results in a lot of pathological behaviour.
In particular, it is not immediately clear what the correct definition of a polynomial ideal over $\HH$ should be.
As such, we restrict ourselves to RAC homomorphisms from fields where we have a well-defined notion of polynomial ideals.

\begin{theorem*}[\ref{thm:fund+thm+proj}]
    Let $f: \FF \rightarrow \HH$ be a relatively algebraically closed homomorphism from an algebraically closed field $\FF$ to a hyperfield $\HH$.
    Then, for any ideal $I \subseteq \FF[X_1 \dots X_n]$, 
    \[
    V(f_*(I)) = \SetOf{(f(a_1), \dots, f(a_n)) \in \HH^n}{\ba \in V(I)} \, .
    \]
\end{theorem*}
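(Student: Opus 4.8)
\medskip
\noindent The inclusion $\supseteq$ is immediate from Theorem~\ref{thm:Kapranov}: if $\ba\in V(I)$ then $\ba\in V(p)$ for every $p\in I$, so $(f(a_1),\dots,f(a_n))\in V(f_*(p))$ for all such $p$, hence it lies in $\bigcap_{p\in I}V(f_*(p))=V(f_*(I))$. The content is the reverse inclusion, which I would establish by reworking the classical proof of the Fundamental theorem (see~\cite{MS:21}) so that it uses only the multiplicative structure of $\HH$, the RAC hypothesis, and Theorem~\ref{thm:Kapranov}. We use freely that $\HH$ has no zero divisors, that $f^{-1}(0)=\{0\}$, and that $f$ is surjective (the $p=0$ instance of Theorem~\ref{thm:Kapranov}).

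\emph{Reduction to prime ideals.} Let $\mathfrak p_1,\dots,\mathfrak p_r$ be the minimal primes over $I$; as $I\subseteq\mathfrak p_i$ we get $\bigcup_iV(f_*(\mathfrak p_i))\subseteq V(f_*(I))$, and as $V(I)=\bigcup_iV(\mathfrak p_i)$ it suffices to prove the theorem for each $\mathfrak p_i$, provided we also show $V(f_*(I))\subseteq\bigcup_iV(f_*(\mathfrak p_i))$. This rests on the observation that the failure of a point to be a root is multiplicative: if $0\notin f_*(q)(\bw)$ and $0\notin f_*(q')(\bw)$ then $0\notin f_*(qq')(\bw)$, because each coefficient of $qq'$ is a sum of products of coefficients of $q$ and $q'$, so the homomorphism property together with distributivity and associativity of hypersums gives $f_*(qq')(\bw)\subseteq f_*(q)(\bw)\cdot f_*(q')(\bw)$, which omits $0$ since $\HH$ has no zero divisors. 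Now if $\bw\in V(f_*(I))$ lay in no $V(f_*(\mathfrak p_i))$, pick $q_i\in\mathfrak p_i$ with $0\notin f_*(q_i)(\bw)$; then $q:=\prod_iq_i\in\sqrt I$ satisfies $0\notin f_*(q^m)(\bw)$ for all $m$, contradicting $q^m\in I$.

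\emph{The prime case, by induction on the number of variables.} Suppose $I=\mathfrak p$ is prime. If $\mathfrak p$ is maximal then $V(\mathfrak p)=\{\ba\}$, and since $X_i-a_i\in\mathfrak p$, evaluating the univariate polynomials $f_*(X_i-a_i)$ forces $V(f_*(\mathfrak p))\subseteq\{(f(a_1),\dots,f(a_n))\}$; the case $\mathfrak p=(0)$ is surjectivity of $f$. Otherwise fix $\bw\in V(f_*(\mathfrak p))$ with $\mathfrak p\neq(0)$. If some coordinate $w_j=0$, then using $f^{-1}(0)=\{0\}$ one checks $f_*(p)(\bw)=f_*(p|_{X_j=0})(\bw'')$ for every $p\in\mathfrak p$, where $\bw''$ deletes the $j$-th coordinate; hence $\bw''\in V(f_*(J))$ for the ideal $J\subseteq\FF[X_i:i\neq j]$ obtained by setting $X_j=0$ in $\mathfrak p+(X_j)$, and the inductive hypothesis (after decomposing $J$ and recursing into a component whose $f_*$-variety contains $\bw''$) yields a lift of $\bw''$, to which we adjoin $a_j=0$. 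If instead $\bw$ has no zero coordinate, a \emph{monomial} change of coordinates --- which commutes with $f_*$ because $f$ is multiplicative --- reduces to $\bw=(1_\HH,\dots,1_\HH)$. As $\dim V(\mathfrak p)<n$ (since $\mathfrak p\neq(0)$), relabel so that $X_n$ is algebraic over $\FF(X_1,\dots,X_{n-1})$ modulo $\mathfrak p$, and let $\mathfrak p'=\mathfrak p\cap\FF[X_1,\dots,X_{n-1}]$, a prime with $V(\mathfrak p')=\overline{\proj(V(\mathfrak p))}$. Since the elements of $\mathfrak p'$ omit $X_n$, the projection $(w_1,\dots,w_{n-1})$ lies in $V(f_*(\mathfrak p'))$, so by induction it lifts to some $\ba'\in V(\mathfrak p')$; if $\ba'$ is chosen so that the fibre of $V(\mathfrak p)\to V(\mathfrak p')$ over it is finite and nonempty, this fibre is cut out by a nonzero $r\in\FF[X_n]$, and Theorem~\ref{thm:Kapranov} furnishes $a_n$ with $(\ba',a_n)\in V(\mathfrak p)$ and $f(a_n)=w_n$ as soon as $w_n\in V(f_*(r))$.

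\emph{The main obstacle.} Verifying $w_n\in V(f_*(r))$ is the heart of the argument, and the step I expect to demand the most care. The difficulty is that $f$ does not respect addition: cutting out the fibre over $\ba'$ substitutes $\ba'$ into the coefficients of elements of $\mathfrak p$, an operation $f$ controls only up to an inclusion of hypersums, so the fact that $\bw$ is an $f_*$-root of every element of $\mathfrak p$ need not survive specialisation to $r$. The remedy, in the spirit of the genericity arguments in \cite{MS:21, Maxwell:21}, is to take $\ba'$ generic, so that no unexpected $\FF$-level cancellation occurs among the finitely many coefficient polynomials at play; for such $\ba'$ the hypersums computed from the specialised coefficients coincide with those arising from $f_*(\mathfrak p)$, and the root condition transfers. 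Implementing this forces the induction to be run in a stronger form --- the lift may be taken outside any prescribed proper subvariety of $V(\mathfrak p')$ --- which in turn requires $V(\mathfrak p')\cap f^{-1}(\proj(\bw))$ to be Zariski dense in $V(\mathfrak p')$, i.e. the fibres of $f$ to be suitably large; this is where the RAC hypothesis and surjectivity of $f$ re-enter.
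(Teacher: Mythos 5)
Your first two steps agree in spirit with the paper: the easy inclusion is the paper's Lemma~\ref{lem:ideal-var-incl}, and your reduction to prime ideals via the multiplicativity $f_*(qq')(\bw)\subseteq f_*(q)(\bw)\odot f_*(q')(\bw)$ and the radical is essentially the paper's Lemma~\ref{lem:pushforward+reducible}. The divergence begins in the prime case: you induct on the number of variables and eliminate a variable by projecting $V(\mathfrak p)\to V(\mathfrak p')$, whereas the paper inducts on $\dim V(I)$ and cuts by a hypersurface. Concretely, the paper picks $\bw\in f^{-1}(\by)$, finds $q\in\mathfrak m_\bw\setminus I$ so that $\dim V(I+\langle q\rangle)$ drops, applies the inductive hypothesis to $I+\langle q\rangle$ to find some $p\in I+\langle q\rangle$ with $\0\notin f_*(p)(\by)$, splits $p=\hat p+\lambda q$ with $\hat p\in I$, and then applies Theorem~\ref{thm:Kapranov} to the single polynomial $\hat p\in I$. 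Because the witness lives in $I$ itself, Kapranov is the only analytic input and one never has to commute a specialisation with $f_*$.

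That commutation problem is exactly where your argument has a genuine gap, and the gap is not cosmetic. You need $w_n\in V(f_*(r))$, where $r$ generates the ideal $\{\,p(\ba',X_n):p\in\mathfrak p\,\}\subseteq\FF[X_n]$. What you know is that $\0\in f_*(p)(\bw)$ for all $p\in\mathfrak p$, but the coefficients of $p(\ba',X_n)$ are $\FF$-sums of monomials in $\ba'$, and $f$ of such a sum is only a \emph{member} of the corresponding hypersum over $\HH$. Crucially, that hypersum in $\HH$ depends only on $\bw'=f(\ba')$, not on $\ba'$; so choosing $\ba'$ generically does nothing to make the hypersum collapse to the image of the actual coefficient, which is the coincidence you would need. (Generic $\ba'$ controls cancellation in $\FF$, but the ambiguity lives in $\HH$.) The supporting claim --- that the lift $\ba'$ may be placed outside any prescribed proper subvariety of $V(\mathfrak p')$, i.e.\ that $f^{-1}(\bw')\cap V(\mathfrak p')$ is Zariski dense --- is also unjustified and is strictly stronger than the inductive hypothesis gives you: the theorem produces one lift, not a dense family, and I do not see how to obtain density without a circularity. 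Until these two points are resolved, the projection route does not close, and the paper's Nullstellensatz-plus-dimension argument is the cleaner path precisely because it sidesteps specialisation altogether.
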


Specialising $\HH = \TT$ and $f$ a valuation recovers the Fundamental theorem of tropical geometry.
We can also consider this theorem for our new family of RAC maps, namely fine valuations.
The prototypical example of such a map is an extension of the valuation map on the field of Puiseux series that records the entire leading term rather than just the leading exponent:
\begin{align*}
  \fval\colon\pseries{\CC} &\rightarrow (\CC^\times \times \QQ) \cup \{\infty\} \, , & \\
  \sum_{i=0}^\infty c_it^{a_i} &\mapsto c_0t^{a_0} \, , & a_0 < a_1 < a_2 < \dots \in \QQ \, .
\end{align*}
With the fundamental theorem in hand, this motivates the definition of \emph{fine tropical varieties} (Definition~\ref{def:fine+trop+variety}) as a refinement of tropical varieties.
These are initially defined as the images of algebraic varieties in the fine valuation map, but the previous theorem allows us to view them also as the solution set of polynomials over the `fine tropical hyperfield'.
Utilising this perspective, we give a structure theorem for fine tropical varieties, describing them as tropical varieties with algebraic varieties associated to each polyhedral cell determined by their initial ideals (Theorem~\ref{thm:fine+grobner}).
We end by motivating their study further by exhibiting applications to stable intersection of tropical varieties, as well as links to polyhedral homotopy continuation where they have already been studied under another guise.

\subsection{Related work}

Hyperfields are related to many other algebraic objects, and many applicable results may be stated in these terms.
Baker and Bowler's work developing matroids over hyperfields~\cite{Baker+Bowler:18} was quickly generalised to other `partial hyperstructures' including partial fields, pastures and tracts~\cite{Baker+Bowler:19}.
These also encompass the fuzzy rings introduced by Dress~\cite{Dress:86}, whose link with hyperfields was explicitly given in~\cite{Gian+Jun+Lor:17}.
All of these objects belong to a larger category of algebraic objects called blueprints~\cite{Lorscheid:12,Lorscheid:22}, fundamental objects within $\FF_1$-geometry.
Hyperfields are also closely related to semirings in a number of ways.
The tropical hyperfield $\TT$ (and other tropical extensions) was first studied as (extensions of) the tropical semifield.
Moreover, we can `lift' the tropical hyperfield to a power semiring whose elements are subsets of $\TT$, making hyperaddition a singlevalued operation: this object is isomorphic to Izhakian's extended tropical arithmetic~\cite{IZH:09}.
Lifting more general hyperfields to semirings leads to the notion of semiring systems \cite{Row:21,AGR:22,AGT:23}. 

Alternative hyperfields have been considered for the study of tropical geometry, although often in the language of semirings.
The most studied is the signed tropical hyperfield, first introduced to enrich $(\max, +)$-linear algebra~\cite{MaxPlus:90,Gaubert:92}.
It has proved useful for studying semialgebraic sets over real valued fields~\cite{AllamigeonGaubertSkomra:20,JSY:22}, where the valuation map is enriched to record the sign of an element.
Moreover, it has close ties to real algebraic geometry via Mazlov dequantization and Viro's patchworking method~\cite{Viro:84}. 
Applications in enumerative tropical geometry motivated the introduction of a complexified valuation map, which records both the valuation and the phase of an element.
This was used by Mikhalkin to enumerate curves in toric surfaces~\cite{Mikhalkin:05}, and motivated a complex analogue of Mazlov dequantization introduction by Viro~\cite{Viro:11}.
Other tropical extensions of semirings have been studied with regards to tropical linear algebra~\cite{AGG:09, AGG:14}.
Moreover, non-tropical hyperfields also arise naturally: one can view the theory of amoebas and coamoebas through the hyperfield lens~\cite{Purbhoo:08,Forsgaard:15,Nisse+Passare:17,deWolff:17,Forsgaard:21}, as images of varieties in the triangle and phase hyperfields respectively.

Our approach to geometry of hyperfields requires studying roots of polynomials over hyperfields.
First discussed by Viro with respect to tropical geometry~\cite{Viro:11}, the first systematic study was by Baker and Lorscheid~\cite{Baker+Lorscheid:18} who introduced the notion of the multiplicity of a root, and unified Descartes' rule of signs and Newton's polygon rule in the framework of hyperfields.
This inspired a flurry of progress in the study of roots and factorizations of polynomials over hyperfields~\cite{Agu+Lor:21,Gunn:22,GrossGunn:23,AGT:23}.
An alternative scheme-theoretic approach to geometry over hyperfields was developed in \cite{JJun,Jun:21}.


\subsection{Structure of the paper}

The structure of the paper is as follows.
In Section~\ref{sec:hyperfields}, we recall the necessary preliminaries of hyperfields.
We give a number of key examples and constructions, including factor hyperfields and tropical extensions of hyperfields.
We then recall homomorphisms of hyperfields and introduce enriched valuations, framed as homomorphisms from fields to tropical extensions.
We finally recall the necessary details to define affine and projective (pre)varieties over hyperfields, including polynomials and the deficiencies with polynomial sets.

Section~\ref{sec:rac} is dedicated to studying relatively algebraically closed maps.
We first motivate their study by observing deficiencies with algebraically closed hyperfields that do not arise for fields.
We then deduce that homomorphisms from algebraically closed fields to stringent hyperfields are necessarily RAC, giving rise to a new family of RAC maps, namely fine valuations.
We also show that RAC maps are closed under taking tropical extensions and certain quotients.

Section~\ref{sec:main+thms} is where we prove our main theorems, the generalisations of Kapranov's theorem and the Fundamental theorem (Theorems~\ref{thm:Kapranov} and~\ref{thm:fund+thm+proj}).
As an application of these theorems, we define fine tropical varieties in Section~\ref{sec:fine+tropical+varieties} and prove a structure theorem for them (Theorem~\ref{thm:fine+grobner}).
We then demonstrate two potential applications of fine tropical varieties, namely stable intersection and polyhedral homotopies.
We end with some unresolved questions and avenues for further study in Section~\ref{Sec:further-qs}.

\subsection*{Acknowledgements}
We thank Jeff Giansiracusa for useful conversations and feedback on an early draft of the article, and an anonymous referee for helpful and detailed comments.
We also thank Farhad Babaee and Yue Ren for helpful conversations, and Trevor Gunn for communicating related work.
JM and BS were both supported by the Heilbronn Institute for Mathematical Research.

\section{Hyperfields} \label{sec:hyperfields}
In this section, we recall the necessary preliminaries of hyperfields.
This includes key definitions, examples and constructions of hyperfields and the maps between them.
We also discuss how to understand polynomials over hyperfields and establish a framework for their varieties.
For an alternative treatment of hyperfields, see the articles of Viro~\cite{Viro:10, Viro:11}.

Given a set $\HH$, we denote the set of non-empty subsets of $\HH$ by $\cP^*(\HH)$. 
A \emph{hyperoperation} on $\HH$ is a map $\boxplus\colon\HH\times\HH\rightarrow \cP^*(\HH)$ that sends two elements $a,b \in \HH$ to a non-empty subset $a \boxplus b$ of $\HH$.
This map can be extended to strings of elements, or sums of subsets, via:
\begin{align*}
    a_1 \boxplus a_2 \boxplus \dots \boxplus a_k &:= \bigcup_{a' \in a_2 \boxplus \dots \boxplus a_k} a_1 \boxplus a' \quad , \quad a_i \in \HH \, , \\
    A \boxplus B &:= \bigcup_{a \in A, b \in B} a \boxplus b \quad , \quad A, B \subseteq \HH \, .
\end{align*}
For ease of notation, we shall refrain from using set brackets for singleton sets.
The hyperoperation $\boxplus$ is called \emph{associative} if it satisfies
\[
(a \boxplus b) \boxplus c = a \boxplus (b \boxplus c) \quad , \quad \forall a,b,c \in \HH \, , 
\]
and is called \emph{commutative} if it satisfies
\[
a \boxplus b = b \boxplus a \quad \forall a,b \in \HH \, .
\]
\begin{definition}

A (canonical) \emph{hypergroup} is a tuple $(\HH, \boxplus, \0)$ where $\boxplus$ is an associative and commutative hyperoperation satisfying:
\begin{itemize}
    \item (Identity) $\0 \boxplus a = a$ for all $a \in \HH$,
    \item (Inverses) For all $a \in \HH$, there exists a unique $(-a) \in \HH$ such that $\0 \in a \boxplus -a$,
    \item (Reversibility) $a \in b \boxplus c$ if and only if $c \in a \boxplus (-b)$.
\end{itemize}
A \emph{hyperring} is a tuple $(\HH, \boxplus, \odot, \0, \1)$, where $\boxplus$ is hyperaddition and $\odot$ is multiplication, satisfying,
\begin{itemize}
\item $(\HH, \boxplus, \0)$ is a canonical hypergroup,
\item $(\HH^\times, \odot, \1)$ is a commutative monoid, where $\HH^\times := \HH \setminus \0\,$,
\item $a\odot(b \boxplus c) = a\odot b \boxplus a \odot c$ for all $a,b,c \in \HH\quad$ (where $a\odot(b \boxplus c) = \bigcup_{d \in b\boxplus c} a \odot d\,$)
\item $\0 \odot a = \0$ for all $a \in \HH$. 
\end{itemize}

A \emph{hyperfield} is a hyperring such that $(\HH, \odot, \1)$ is an abelian group, i.e. for every $a \in \HH^{\times}$, there exists a unique element $a^{-1}$ such that $a \odot a^{-1} = \1$.
\end{definition}


\begin{example}
We briefly recall some natural examples of hyperfields.
Many of these examples are linked as we shall see later; for a diagram and description of their relations, see~\cite{AndersonDavis:19}.
    \begin{itemize}
        \item $\FF$ - An ordinary field $\FF$ can be trivially viewed as a hyperfield, where the hyperaddition is defined as $a \boxplus b = a+b$.
        \item $\KK$ - The \emph{Krasner hyperfield} is the set $\KK := \{\0,\1\}$ with the standard multiplication and hyperaddition defined as
\[
\0 \boxplus \0 = \0 \quad , \quad \0 \boxplus \1 = \1 \boxplus \0 = \1 \quad , \quad \1 \boxplus \1 = \{ \0 , \1\} \, .
\]
        \item $\SS$ - The \emph{sign hyperfield} is the set $\SS :=\{-\1,\0,\1\}$ with standard multiplication and hyperaddition defined as
\[
\0 \boxplus a = a \quad , \quad a \boxplus a = a \quad , \quad \1 \boxplus -\1 = \{-\1,\0,\1\} \quad \forall a \in \SS .
\]
        \item $\TT$ - The \emph{(min-)tropical hyperfield} is the set $\TT := \RR \cup \{\infty\}$ with hyperfield operations
\begin{align*}
g \boxplus h &= 
\begin{cases}
\min(g,h)  &\text{if} \quad g \neq h\\
\{ g' \in \TT \, | \, g' \ge g\} &\text{if} \quad g = h
\end{cases} \, , \\
g \odot h &= g + h \, .
\end{align*} 
The additive and multiplicative identity elements are $\0 = \infty$ and $\1 = 0$.

The $\max$-tropical hyperfield is the isomorphic hyperfield obtained by replacing $\min$ with $\max$ and $\infty$ with $-\infty$.
As the $\min$ convention is consistent with the theory of valuations, we will use this throughout unless stated otherwise.
        \item $\PP$ - The \emph{phase hyperfield} $\PP = \SetOf{z \in \CC}{|z| = 1} \cup \{0\}$ is the hyperfield with operations
\begin{align*}
    z_1 \odot z_2 &= z_1\cdot z_2 \\
    z_1 \boxplus z_2 &= \begin{cases}
        z_1 & z_1 = z_2 \\
        \{z_1, 0, -z_1\} & z_2 = -z_1 \\
        \text{shortest open arc between } z_1, z_2 & \text{otherwise}
    \end{cases}
\end{align*}
        \item $\Phi$ - The \emph{tropical phase hyperfield} $\Phi = \SetOf{z \in \CC}{|z| = 1} \cup \{0\}$ is the hyperfield with operations
\begin{align*}
    z_1 \odot z_2 &= z_1\cdot z_2 \\
    z_1 \boxplus z_2 &= \begin{cases}
        z_1 & z_1 = z_2 \\
         \SetOf{z \in \CC}{|z| = 1} \cup \{0\} & z_2 = -z_1 \\
        \text{shortest closed arc between } z_1, z_2 & \text{otherwise}
    \end{cases}
\end{align*}
        \item $\TT\RR$ - The \emph{signed tropical hyperfield} is the set $\TT\RR:= (\{\pm 1\} \times \RR) \cup \{\infty\}$, where $\{-1\} \times \RR$ is a `negative' copy of the tropical numbers.
Its hyperfield operations are
\begin{align*}
    (s_1,g_1) \boxplus (s_2,g_2) &=
    \begin{cases}
        (s_1,g_1), & \text{if} \, g_1 < g_2,\\
        (s_2,g_2), & \text{if} \, g_2 < g_1,\\
        (s_1,g_1), & \text{if} \, s_1 = s_2, \, \text{and} \, g_1 = g_2,\\
        \{(\pm 1, h) \mid h \geq g_1\} \cup \{\infty\}, & \text{if} \, s_1 = -s_2, \, \text{and} \, g_1 = g_2 ,
    \end{cases} \\
    (s_1,g_1) \odot (s_2,g_2) &= (s_1 \cdot s_2, g_1 + g_2) \, .
\end{align*}
The additive and multiplicative identity elements are $\0 = \infty$ and $\1 = (1,0)$.
As with $\TT$, we can obtain an isomorphic hyperfield by replacing $\min$ with $\max$ and $\infty$ with $-\infty$. 

Note that there is an alternative description of $\TT\RR$ as the \emph{real tropical hyperfield} with underlying set $\RR$.
The isomorphism between these spaces is given by
\[
\TT\RR \rightarrow \RR \quad , \quad (s,g) \mapsto s\cdot\exp(-g) \, ,
\]
where the sign change switches between the min and max convention. 
        \item $\TT\CC$ - The \emph{tropical complex hyperfield} $\TT\CC$ has the complex numbers $\CC$ as its underlying set, with multiplication given by standard complex multiplication and hyperaddition defined as:
\begin{equation} 
z \boxplus w = \nonumber
\begin{cases}
z & \quad \text{if}\quad |z| > |w|,\\
w & \quad \text{if} \quad |w| >|z|,\\
\text{Shortest closed arc connecting}\; z \; \text{and} \; w\; \text{with radius}\; |z|, & \quad \text{if} \quad |z|=|w|, \, z\neq -w,\\
\{ c \in \CC \mid |c| \le |z| \}, & \quad \text{if} \quad  z=-w.
\end{cases}
\end{equation}
    \end{itemize}
\end{example}

A very general method for constructing hyperfields is to quotient a field by a subgroup of its units.
\begin{definition}\label{def:factor}
A \emph{factor hyperfield} is a hyperfield $\HH = \FF/U$ arising as the quotient of a field $(\FF,+,\cdot)$ by a multiplicative subgroup $U \subseteq \FF^{\times}$.
The elements of $\HH$ are cosets $\overbar{a} := a \cdot U = \{ a \cdot u \mid u \in U\}$, and the operations are inherited from the field operations:
\begin{align*}
\overbar{a} \boxplus \overbar{b} &= \{ \overbar{c} \mid c \in \overbar{a}+\overbar{b}\} \, ,\\
\overbar{a} \odot \overbar{b} &= \overbar{a \cdot b}
\end{align*}
\end{definition}

\begin{example}\label{ex:factor+hyperfields}
Many of the examples we have seen can be realised as factor hyperfields.
For instance, we can realise the following as the quotients
\[
\FF \cong \FF/\1 \quad , \quad \KK \cong \FF/\FF^{\times} \quad , \quad \SS \cong \RR/\RR_{>0}  \quad , \quad \PP \cong \CC/\RR_{>0}
\] 
where $\FF\neq \FF_2$ is any field. 
\end{example}
\begin{remark}
Given some arbitrary hyperfield $\HH$ and multiplicative subgroup $U \subseteq \HH^\times$, we can define the quotient $\HH/U$ analogously as in Definition~\ref{def:factor}.
The proof that this construction gives a hyperfield is identical to the field case given in~\cite{Kra:83}. 
\end{remark}

Next we will present the definition of a \emph{tropical extension} of a hyperfield.
\begin{definition}\label{def:tropical+extension}
Let $\HH$ be a hyperfield and $(\Gamma,+)$ an (additive) ordered abelian group.
The \emph{tropical extension} of $\HH$ by $\Gamma$ is the set
\[
\HH\rtimes \Gamma = \{(c,g) \mid c \in \HH^\times \, , \, g \in \Gamma\} \cup \{\0\}
\]
where multiplication is given by $(c_1,g_1)\odot(c_2, g_2) = (c_1\odot_\HH c_2 , g_1+g_2)$ and addition given by
\[
(c_1 , g_1) \boxplus (c_2 , g_2) =
\begin{cases}
    (c_1 , g_1) & g_1 < g_2 \\
    (c_2 , g_2) & g_2 < g_1 \\
    \{(c,g) \mid c \in c_1 \boxplus_{\HH} c_2\} & g_1 = g_2 = g \, , \, \0_\HH \notin c_1 \boxplus c_2\\
    \{(c,g) \mid c \in (c_1 \boxplus_{\HH} c_2) \setminus \0_\HH\}\cup \{(b,h) \mid h > g \, , \, b \in \HH^\times\} \cup \{\0\} & g_1 = g_2 = g \, , \, \0_\HH \in c_1 \boxplus c_2\\
\end{cases}
\]
\end{definition}

\begin{example}\label{ex:trop-hyp-trop-ext}
    The tropical hyperfield can be realised as the tropical extension $\TT \cong \KK\rtimes \RR$ where $(\RR, +)$ is viewed as an ordered abelian group.
    We can extend this correspondence to the \emph{rank k tropical hyperfield} $\TT^{(k)} \cong \KK\rtimes \RR^{(k)}$ where $(\RR^{(k)}, +)$ is $k$-tuples of reals ordered lexicographically.
\end{example}
\begin{example}\label{ex:sgn-trop-hyp-trop-ext}
    The signed tropical hyperfield can be viewed as the tropical extension $\TT\RR \cong \SS\rtimes \RR$.
\end{example}

\begin{example}\label{ex:tc+trop+ext}
    The tropical complex hyperfield can be viewed as the tropical extension $\TT\CC \cong \Phi\rtimes \RR$.
    As the underlying set of $\TT\CC$ is the complex numbers, this isomorphism can be explicitly written as
    \begin{align*}
    \TT\CC &\rightarrow \Phi \rtimes \RR \\
    z &\mapsto 
    \begin{cases} 
    (\ph(z), -\log(|z|)) & z \neq 0 \\ 
    \0 & z = 0    
    \end{cases} \, ,
    \end{align*}
    where $\ph(z) = z/|z|$ is the \emph{phase map}.
\end{example}

As a brief application of this construction, we recall Bowler and Su's classification of \emph{stringent} hyperfields.

\begin{definition}
    A hyperfield $\HH$ is \emph{stringent} if $a \boxplus b$ is a (non-singleton) set if and only if $a = -b$.
\end{definition}

\begin{theorem}\cite{BowlerSu:21}\label{thn:stringent+classification}
    Let $\HH$ be a stringent hyperfield.
    Then $\HH$ is isomorphic to either $\KK\rtimes \Gamma, \SS\rtimes \Gamma$ or $\FF\rtimes \Gamma$ for some ordered abelian group $\Gamma$.
\end{theorem}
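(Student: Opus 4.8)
The plan is to classify stringent hyperfields by analyzing the structure of $a \boxplus a$ for $a \neq \0$, which by the stringency hypothesis is always a singleton. First I would set $\Gamma$ to be the quotient group $\HH^\times / U$, where $U$ is an appropriate subgroup of units that will turn out to be either all of $\HH^\times$ (giving the Krasner case), an index-two subgroup (giving the signed case), or such that $U$ itself forms a field under the inherited operations (giving the $\FF \rtimes \Gamma$ case). The key observation is that for a stringent hyperfield, the value $1 \boxplus 1$ is a single element $c$, and the multiplicative behavior of $c$ — whether $c = \0$, $c = \1$, $c = -\1$, or $c$ is a unit of infinite order — dictates which of the three families we land in.

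The key steps, in order, would be: (1) Show that the set $U = \{a \in \HH^\times \mid \0 \notin \1 \boxplus a\} \cup \{\0\}$, or an analogous ``non-leading'' locus, is closed under the hyperoperations in a single-valued way, hence forms a field $\FF$ (this uses stringency crucially: sums in $U$ never hit $\0$, so they're singletons). (2) Define a valuation-like map $\val \colon \HH^\times \to \Gamma := \HH^\times/\FF^\times$ and show it behaves like a valuation: $\val(ab) = \val(a) + \val(b)$, and for the hyperaddition, $\val(a \boxplus b) = \min(\val(a), \val(b))$ when the valuations differ. (3) Order $\Gamma$ using the rule that $\val(a) > \val(b)$ iff $a \boxplus b = b$ (i.e. the lower-valuation term dominates), and check this is a well-defined total order compatible with the group operation. (4) Split into cases according to $\FF$: if $\FF = \{\0,\1\}$ trivially then $\HH \cong \KK \rtimes \Gamma$; if $\FF \cong \{-\1,\0,\1\}$ with $\1 \boxplus \1 = \1$ then $\HH \cong \SS \rtimes \Gamma$; otherwise $\FF$ is a genuine field and $\HH \cong \FF \rtimes \Gamma$, building the isomorphism by choosing coset representatives. (5) Verify the constructed map is a hyperfield isomorphism by matching the four-case definition of addition in $\HH \rtimes \Gamma$ against the behavior in $\HH$.

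The main obstacle I expect is step (1) together with the verification that $\FF$ is actually a \emph{field} and not merely a ring or something degenerate — in particular, one must rule out pathologies like $\1 \boxplus \1$ being a unit of infinite multiplicative order, which would prevent $U \cup \{\0\}$ from being closed, and one must handle the characteristic-two-like case where $\1 = -\1$. The subtlety is that stringency only constrains \emph{when} a sum is multivalued, not the algebraic identity of single-valued sums, so ruling out exotic behavior of $\1 \boxplus \1$ requires carefully exploiting associativity and reversibility of the hypergroup axioms. A secondary difficulty is checking that the order on $\Gamma$ is total: one needs that for any two cosets, one dominates the other in hyperaddition, which amounts to showing $a \boxplus b$ is never multivalued unless $\val(a) = \val(b)$ and $a = -b$ within that valuation layer — again a consequence of stringency but one that needs to be teased out. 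Since this theorem is quoted from \cite{BowlerSu:21}, in the paper itself I would simply cite it; the above is how one would reconstruct the argument.
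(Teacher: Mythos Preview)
The paper does not prove this theorem at all; it is stated with a citation to \cite{BowlerSu:21} and used as a black box. You recognise this in your final sentence, so your proposal matches the paper's treatment exactly: cite the result and move on.

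Since you also offer a reconstruction, a brief remark on it: the overall architecture (extract an ordered abelian group $\Gamma$ from a domination relation on $\HH^\times$, identify a residue structure that is either $\KK$, $\SS$, or a genuine field, then assemble an isomorphism with the tropical extension) is indeed the shape of Bowler--Su's argument. However, two details of your sketch would not survive contact with the actual proof. First, your proposed set $U = \{a \in \HH^\times \mid \0 \notin \1 \boxplus a\} \cup \{\0\}$ is simply $\HH \setminus \{-\1\}$, which is not a subgroup and not the object you want; the relevant ``valuation-zero'' layer is instead carved out by the domination order you define in step~(3), not by avoidance of $\0$. Second, the trichotomy does not hinge on the multiplicative order of $c = \1 \boxplus \1$ as you suggest in your opening paragraph: the split is governed by whether $\1 = -\1$ in $\HH$ (forcing the $\KK$ case), whether $\1 \neq -\1$ but the residue layer has only two nonzero elements with idempotent addition (the $\SS$ case), or whether the residue layer is a genuine field. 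The ``infinite multiplicative order'' scenario you flag as an obstacle does not actually arise once the valuation is set up correctly, because $\1 \boxplus \1$ necessarily lies in the same valuation layer as $\1$.
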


\begin{example}
Observe that $\FF, \KK, \SS, \TT$ and $\TT\RR$ are all stringent.
This can be seen directly, as the only sets we can obtain in each case are those coming from the sum of an element with its inverse:
\begin{align*}
\KK &\colon \1 \boxplus \1 = \{\0,\1\} & \SS &\colon \1 \boxplus -\1 = \{-\1, \0, \1\} \\
\TT &\colon g \boxplus g = \{h \in \TT \mid h \geq g\}  & \TT\RR &\colon (1,g) \boxplus (-1,g) = \{(\pm 1, h) \mid h \geq g\} \cup \{\infty\}
\end{align*} 
However, this can also be seen from Theorem~\ref{thn:stringent+classification}: $\KK, \SS$ and $\FF$ can be trivially written as a tropical extension by the trivial group, and $\TT$ and $\TT\RR$ are tropical extensions of $\KK$ and $\SS$ by $\RR$ respectively.
\end{example}


\begin{remark}
Outside of hyperfields, the tropical extension construction has been defined and studied for a number of algebraic structures with both single and multivalued addition.
For single-valued structures, it has been used to study idempotent semirings~\cite{AGG:09, AGG:14, IzhakianKnebuschRowen:14} and more recently for \emph{semiring systems} as a bridge between semirings and hyperstructures~\cite{Row:21,AGR:22,AGT:23}.
For multivalued structures, it can be defined for a number of generalisations of hyperfields including \emph{idylls}, which were investigated as a way to study multiplicities of roots of polynomials~\cite{Gunn:22}.

We note that this construction is also sometimes refereed to as \emph{layering} rather than extension in the literature.
\end{remark}

\subsection{Homomorphisms and enriched valuations}\label{subsec:homo+val}
We will now recall the definition of a hyperfield homomorphism, connecting this with tropical extensions and valuations, and construct several key examples.

\begin{definition}
A map $f\colon\HH_1\rightarrow\HH_2$ is a \emph{hyperfield homomorphism} if it satisfies
\begin{align*}
f(a \boxplus_1 b) &\subseteq f(a) \boxplus_2 f(b) \, , & f(a \odot_1 b) &= f(a) \odot_2 f(b) \, , \\
f(\0_1) &= \0_2 \, , & f(\1_1) &= \1_2 \, .
\end{align*}
An \emph{isomorphism} of hyperfields is a bijective homomorphism whose inverse map is also a homomorphism.
\end{definition}
As with field homomorphisms, it is also straightforward to show that $\0_1$ is the unique element that gets sent to $\0_2$ by $f$.
We can also similarly deduce that $f(a)^{-1} = f(a^{-1})$ and $-f(a) = f(-a)$ for all $a \in \HH_1$.

\begin{remark}
We will often extend $f$ to a map between spaces $f \colon \HH_1^n \rightarrow \HH_2^n$, obtained by applying $f$ coordinatewise.
Similarly, if $S \subseteq \HH_1^n$ is a subset, then we let $f(S) \subseteq \HH_2^n$ be the set obtained by applying $f$ to each element of $S$. 
\end{remark}

\begin{example}
    Every hyperfield has a trivial homomorphism to the Krasner hyperfield, given by
\begin{align*}
    \omega:\HH &\rightarrow \KK \quad , \quad \omega(a) =
    \begin{cases}
        \1, \quad \text{if} \quad a\neq \0_{\HH}. \\
        \0, \quad \text{if} \quad a=\0_{\HH}.
    \end{cases}
\end{align*}
\end{example}

 \begin{example}\label{ex:factor+hom}
 Given a factor hyperfield $\HH=\FF/U$, there is a natural hyperfield homomorphism
 \begin{equation*}
     \tau:\FF \rightarrow \HH \quad , \quad a \mapsto \overbar{a} \, .
 \end{equation*}
 Given that all the hyperfields we have see are factor hyperfields, this gives many examples of hyperfield homomorphisms, e.g.
 \begin{align*} 
  \sgn:\RR &\rightarrow \SS \cong \RR/\RR_{>0} \, , & \ph:\CC &\rightarrow \PP \cong \CC/\RR_{>0} \, , \\
a &\mapsto 
 \begin{cases}
 \1 & \text{if} \quad  a \in \RR_{>0} \\
 -\1 & \text{if} \quad a \in \RR_{<0} \\
 \0 & \text{if} \quad a=0
 \end{cases} & 
 z &\mapsto 
\begin{cases}
\frac{z}{|z|} & \text{if} \quad z \in \CC\backslash\{0\} \\
 0 &\text{if} \quad z=0
 \end{cases}
 \end{align*}
\end{example}



  
As with tropical extensions of hyperfields, one can consider tropical extensions of homomorphisms.
  \begin{definition}\label{def:hom+extension} 
    Let $f\colon \HH_1 \rightarrow \HH_2$ be a map between two hyperfields.
    Given an ordered abelian group $\Gamma$, the \emph{tropical extension of $f$ by $\Gamma$} is the map
    \begin{align*}
    f^\Gamma\colon \HH_1\rtimes \Gamma &\rightarrow \HH_2\rtimes \Gamma \\
    (c,g) &\mapsto (f(c),g) \, .
    \end{align*}
    It is easy to see that if $f$ is a homomorphism, then so is $f^\Gamma$.
  \end{definition}
  
\begin{example}\label{ex:trop+complex+map}
    Consider the homomorphism from the tropical complex hyperfield $\TT\CC$ to the tropical hyperfield:
    \begin{align*}
    \eta \colon \TT\CC &\rightarrow \TT \\
    z &\mapsto -\log(|z|) \, .
    \end{align*}
    This is the key example studied in~\cite{Maxwell:21}, as it `preserves' roots of polynomials; we shall discuss this in detail in Section~\ref{sec:rac}.
    We can give an alternative description of this map in the language of tropical extensions.
    As both $\TT\CC \cong \Phi\rtimes \RR$ and $\TT \cong \KK\rtimes \RR$ are tropical extensions, the map can be rephrased as
    \begin{align*}
    \omega^\Gamma \colon \Phi\rtimes \RR &\rightarrow \KK\rtimes \RR \\
    (\theta, g) &\mapsto (\1,g) \, ,
    \end{align*}
    where $\theta = \ph(z)$ and $g = -\log(|z|)$ given by the isomorphism in Example~\ref{ex:tc+trop+ext}.
In particular, $\eta$ is the extension of the trivial homomorphism $\omega\colon \Phi \rightarrow \KK$ by $\RR$.
  
 \end{example}
 
A crucial example of a hyperfield homomorphism will be those coming from valued fields.
As this motivates many of the applications of our theory, we recall the definition.

\begin{definition}
    A \emph{valuation} on a field $\FF$ is a surjective map $\val\colon \FF \rightarrow \Gamma \cup \{\infty\}$ to an ordered abelian group $(\Gamma,+)$ satisfying
    \begin{itemize}
        \item $\val(a) = \infty \Leftrightarrow a = 0$,
        \item $\val(ab) = \val(a) + \val(b)$,
        \item $\val(a+b) \geq \min(\val(a),\val(b))$, with equality if and only if $\val(a) \neq \val(b)$.
    \end{itemize}
    The pair $(\FF, \val)$ is called a \emph{valued field}.
    The group $\Gamma$ is called the \emph{value group} of $\FF$.
\end{definition}
Note that we can demand that valuations are surjective by restricting the range of non-surjective valuations to the value group.
This will be beneficial later, allowing us to sidestep certain topological concerns we would have to deal with otherwise.

\begin{example}\label{ex:puiseux+series} 
    A natural example of a valued field is the field of \emph{Puiseux series} over a field $\FF$.
    These are formal power series whose exponents are rational with a common denominator
    \[
    \pseries{\FF} = \bigcup_{n \geq 1} \FF(\!(t^{\frac{1}{n}})\!) = \SetOf{\sum_{i=0}^\infty c_it^{g_i}}{\begin{array}{c} c_i \in \FF \, , \, c_0 \neq 0 \\ g_i \in \QQ \text{ with common denominator }\\g_0 < g_1 < \dots < g_i < \dots \end{array} } \, ,
    \] 
    along with the zero element.
    If $\FF$ is an algebraically closed field, then $\pseries{\FF}$ is also algebraically closed: in fact, it is the algebraic closure of the field of Laurent series $\FF(\!(t)\!)$.

    The `first' term $c_0t^{g_0}$ is referred to as the \emph{leading term}, where $c_0$ is the \emph{leading coefficient} and $g_0$ is the \emph{leading exponent}.
    The valuation on $\pseries{\FF}$ maps zero to $\infty$, and a non-zero series to its leading exponent:
    \begin{align*}
    \val\colon \pseries{\FF} &\rightarrow \QQ \cup\{\infty\} \\ \sum_{i=0}^\infty c_it^{g_i} &\mapsto g_0 \, .
    \end{align*}
\end{example}

\begin{example}\label{ex:hahn+series}
    A more general example of a valued field is the field of \emph{Hahn series} with value group $\Gamma$ over a field $\FF$.
    These are formal power series whose exponents are elements of the ordered abelian group $\Gamma$
    \[
    \hseries{\FF}{\Gamma} = \SetOf{\sum_{g \in G} c_g t^g}{c_g \in \FF^\times \, , \, G \subseteq \Gamma \text{ well-ordered } } \, ,
    \]
    along with the zero element.
    The field $\hseries{\FF}{\Gamma}$ is algebraically closed if $\FF$ is algebraically closed and $\Gamma$ is \emph{divisible} i.e. for all $g \in \Gamma$ and $n \in \NN$, there exists $h \in \Gamma$ such that $g = n \cdot h$.
    
    We use the same terminology for leading term/coefficient/exponent as with Puiseux series: the condition that $G$ is well-ordered ensures it exists.
    The valuation on $\hseries{\FF}{\Gamma}$ behaves identically to the valuation on $\pseries{\FF}$, mapping zero to $\infty$ and a non-zero series to its leading exponent:
   \begin{align} \label{eqn:val-trop-ext}
       \val\colon \hseries{\FF}{\Gamma} &\rightarrow \Gamma \cup\{\infty\} \\ 
    \sum_{g \in G} c_g t^g &\mapsto \gamma = \min(g \mid g \in G) \, .\nonumber 
   \end{align} 
\end{example}

Valuations can be considered hyperfield homomorphisms in the following way.
By identifying $\Gamma \cup \{\infty\}$ with $\KK\rtimes \Gamma$, where $\0 = \infty$, we can instead consider $\val$ as a map to $\KK\rtimes \Gamma$.
The following proposition is easy to verify.

\begin{proposition}\label{prop:val}
    $(\FF, \val)$ is a valued field if and only if $\val\colon \FF \rightarrow \KK\rtimes \Gamma$ is a hyperfield homomorphism.
\end{proposition}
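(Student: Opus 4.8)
The plan is to unwind both sides of the equivalence and check that the three hyperfield-homomorphism axioms relevant to a map $\val\colon\FF\to\KK\rtimes\Gamma$ match up exactly with the three valuation axioms. The key bookkeeping point is the identification $\Gamma\cup\{\infty\}\cong\KK\rtimes\Gamma$ sending $\infty\mapsto\0$ and $g\mapsto(\1,g)$ for $g\in\Gamma$; under this identification multiplication in $\KK\rtimes\Gamma$ becomes $(\1,g_1)\odot(\1,g_2)=(\1\odot_\KK\1,\,g_1+g_2)=(\1,g_1+g_2)$ and $\0\odot x=\0$, while hyperaddition of two nonzero elements $(\1,g_1)\boxplus(\1,g_2)$ is $(\1,g_1)$ if $g_1<g_2$, is $(\1,g_2)$ if $g_2<g_1$, and — since $\1\boxplus_\KK\1=\{\0,\1\}\ni\0_\KK$ — is $\{(\1,h)\mid h>g\}\cup\{\0\}$ when $g_1=g_2=g$. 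Translating back, this last set is exactly $\{h\in\Gamma\cup\{\infty\}\mid h>g\}=\{h\mid h\ge g,\ h\ne g\}$ together with $\infty$; I will phrase it as $\{h\in\Gamma\cup\{\infty\}\mid h\geq g\}\setminus\{g\}\cup\{\infty\}$, i.e. all elements strictly above $g$.

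First I would prove the forward direction. Assume $(\FF,\val)$ is a valued field. The condition $\val(\0_\FF)=\0$ is the axiom $\val(a)=\infty\Leftrightarrow a=0$ in one direction, and $\val(\1_\FF)=\1$ follows from multiplicativity: $\val(1)=\val(1\cdot1)=\val(1)+\val(1)$ forces $\val(1)=0$ in the group $\Gamma$, i.e. $\val(1)=(\1,0)=\1$. Multiplicativity $\val(ab)=\val(a)\odot\val(b)$ is immediate from $\val(ab)=\val(a)+\val(b)$ for nonzero arguments (translating $+$ in $\Gamma$ to $\odot$ in $\KK\rtimes\Gamma$) and from $\val(0\cdot a)=\val(0)=\0=\0\odot\val(a)$ when one argument is zero. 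For hyperadditivity I must check $\val(a+b)\in\val(a)\boxplus\val(b)$; I split into cases. If $\val(a)\ne\val(b)$, say $\val(a)<\val(b)$, then the valuation axiom gives $\val(a+b)=\min(\val(a),\val(b))=\val(a)$, which is precisely the singleton $\val(a)\boxplus\val(b)$ in that case. If $\val(a)=\val(b)=g$ (finite, so $a,b\ne0$), the axiom gives $\val(a+b)\geq g$ with the caveat that equality can fail; so $\val(a+b)$ is some element $h$ with $h\geq g$, hence $h>g$ or $h=\infty$ (the case $a+b=0$), and in all these cases $\val(a+b)$ lies in the set $\{(\1,h)\mid h>g\}\cup\{\0\}=\val(a)\boxplus\val(b)$. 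The degenerate subcases where $a=0$ or $b=0$ reduce to the identity axiom $\0\boxplus x=x$ and are trivial. This establishes $\val(a\boxplus_\FF b)=\{\val(a+b)\}\subseteq\val(a)\boxplus\val(b)$.

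For the converse, suppose $\val\colon\FF\to\KK\rtimes\Gamma$ is a hyperfield homomorphism. Surjectivity onto $\Gamma\cup\{\infty\}$ we may take as part of the setup (as the paper notes, valuations are assumed surjective by restricting the target), or I can simply state that "valued field" here means a surjective such map. From $f(\0_1)=\0_2$ and the uniqueness of the preimage of $\0$ (noted just after the definition of homomorphism) we get $\val(a)=\infty\Leftrightarrow a=0$. Multiplicativity of $\val$ on $\FF^\times$ into $\Gamma$ is the $\odot$-multiplicativity restricted to nonzero elements. For the ultrametric inequality, given nonzero $a,b$ with $a+b\ne0$: the containment $\val(a+b)\in\val(a)\boxplus\val(b)$ forces, when $\val(a)\ne\val(b)$, that $\val(a+b)=\min(\val(a),\val(b))$ exactly (the hypersum is a singleton); and when $\val(a)=\val(b)=g$, it forces $\val(a+b)\in\{(\1,h)\mid h>g\}\cup\{\0\}$, i.e. $\val(a+b)>g=\min(\val(a),\val(b))$, so certainly $\val(a+b)\geq\min$ with equality impossible in the equal-valuation case — recovering the "equality iff $\val(a)\ne\val(b)$" clause. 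The remaining case $a+b=0$ gives $\val(a+b)=\infty\geq\min$ automatically. Cases where $a$ or $b$ is $0$ are again handled by the identity axiom.

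I do not expect a genuine obstacle here — the proposition is flagged as "easy to verify" — but the one place requiring care is the final hyperaddition axiom: one must notice that the hyperfield containment $\val(a+b)\in\val(a)\boxplus\val(b)$ is \emph{strictly stronger} than the plain inequality $\val(a+b)\ge\min(\val(a),\val(b))$, because in the equal-valuation branch the hypersum deliberately \emph{excludes} the common value $g$ (since $\1\boxplus_\KK\1\ni\0_\KK$ pushes the result strictly up a layer). It is exactly this exclusion that encodes the "with equality if and only if $\val(a)\neq\val(b)$" half of the valuation axiom, so the matching is tight in both directions rather than merely one-sided. I would make sure to spell that correspondence out explicitly, and otherwise present the proof as a short case check.
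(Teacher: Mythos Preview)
Your overall approach is the same as the paper's, but you have miscomputed the hypersum in $\KK\rtimes\Gamma$ in the equal-valuation case. When $g_1=g_2=g$, the relevant branch of the tropical-extension definition is
\[
\{(c,g)\mid c\in(\1\boxplus_\KK\1)\setminus\0_\KK\}\;\cup\;\{(b,h)\mid h>g,\ b\in\KK^\times\}\;\cup\;\{\0\},
\]
and since $\1\boxplus_\KK\1=\{\0,\1\}$ the first set is $\{(\1,g)\}$, not empty. Hence $(\1,g)\boxplus(\1,g)=\{(\1,h)\mid h\geq g\}\cup\{\0\}$, \emph{including} the layer $g$ itself. This is consistent with the paper's direct description of $\TT\cong\KK\rtimes\RR$, where $g\boxplus g=\{g'\in\TT\mid g'\geq g\}$, and with the hypersum displayed in the paper's own proof of the proposition. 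Your assertion that the hypersum ``deliberately excludes the common value $g$'' is therefore incorrect, and so is the conclusion you draw from it in the converse direction, that the homomorphism condition forces a \emph{strict} inequality $\val(a+b)>g$ when $\val(a)=\val(b)=g$.

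With the corrected hypersum, the containment $\val(a+b)\in\val(a)\boxplus\val(b)$ is equivalent to $\val(a+b)\geq\min(\val(a),\val(b))$, with equality forced whenever the two valuations differ --- i.e.\ the standard ultrametric axiom. The paper's ``if and only if'' phrasing in the third valuation axiom should be read in this sense; equal valuations certainly do not preclude $\val(a+b)=\min$ (take $\val_p(1)=\val_p(2)=\val_p(3)=0$ for $p>3$), so you should not try to engineer the hypersum to match a strict version. Once you fix the hypersum, your forward and backward case checks go through as written.
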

\begin{proof}
    It is clear the first two conditions of a valuation are hyperfield homomorphism properties.
    Moreover, it is straightforward to check $\val(1) = 0$.
    For the final condition, recall that
    \[
    \val(a) \boxplus \val(b) = \begin{cases}
        \min(\val(a), \val(b)) & \val(a) \neq \val(b) \\ \SetOf{h \in \Gamma \cup \{\infty\}}{h \geq \val(a)} & \val(a) = \val(b)
    \end{cases} \, .
    \]
    This implies that $\val(a+b) \in \val(a) \boxplus \val(b)$ is equivalent to the final valuation condition.
\end{proof}

Proposition~\ref{prop:val} shows that valuations are equivalent to homomorphisms from a field to the tropical extension $\KK\rtimes \Gamma$.
This motivates the study of \emph{enriched valuations}: homomorphisms $\FF \rightarrow \HH\rtimes \Gamma$ from a field to a tropical extension, where $\HH$ records additional information about elements over $\FF$.


\begin{example}\label{ex:sval}
    For an ordered field $\FF$ and ordered abelian group $\Gamma$, the valuation map~\eqref{eqn:val-trop-ext} can be enriched to the \emph{signed valuation}, which records the sign of a Hahn series:
\begin{align*}
\sval\colon \hseries{\FF}{\Gamma} & \rightarrow \SS\rtimes \Gamma ,\\ \sum_{g \in G} c_g t^g  &\mapsto (\sgn(c_{\gamma}), \gamma) \, , \quad \gamma = \min(g \mid g \in G) \, ,
\end{align*}
where $\sgn$ defined as in Example~\ref{ex:factor+hom}.
Setting the value group $\Gamma$ to $\RR$ recovers the signed valuation to the signed tropical hyperfield $\SS \rtimes \RR \cong \TT\RR$.
\end{example}

\begin{example}\label{ex:fine+val}
    Consider the field of Hahn series $\hseries{\FF}{\Gamma}$ over an arbitrary field.
    We can enrich the usual valuation map~\eqref{eqn:val-trop-ext} on $\hseries{\FF}{\Gamma}$ by defining the \emph{fine valuation} $\fval$ that remembers the coefficient of its leading term, i.e.
    \begin{align*}
        \fval\colon \hseries{\FF}{\Gamma} &\rightarrow \FF\rtimes \Gamma \\ \sum_{g \in G} c_g t^g &\mapsto (c_\gamma , \gamma)\, , \quad \gamma = \min(g \mid g \in G) \, .
    \end{align*}
As an application of our results, the tropical geometry of this valuation is discussed in Section~\ref{sec:fine+tropical+varieties}.
\end{example}

\begin{example}\label{ex:phval}
    Fixing $\FF = \CC$, the valuation map~\eqref{eqn:val-trop-ext} can be enriched to the \emph{phased valuation}, which records the phase or argument of a Hahn series:
\begin{align*}
\phval\colon \hseries{\CC}{\RR} & \rightarrow \PP\rtimes \Gamma ,\\ \sum_{g \in G} c_g t^g  &\mapsto (\ph(c_{\gamma}), \gamma)\, , \quad \gamma = \min(g \mid g \in G) \, ,
\end{align*}
where $\ph$ is the phase map as defined as in Example~\ref{ex:factor+hom}.
\end{example}

We note that all of these examples also hold over Puiseux series, where the value group is $\Gamma = \QQ$.
However, we also note that $\pseries{\FF} \subsetneq\hseries{\FF}{\QQ}$ as the latter has no common denominator condition.

We end this section by noting that enriched valuations allow us to construct these hyperfields as factor hyperfields.
Explicitly, in each case we can realise each hyperfield as the domain of the enriched valuation modulo the preimage of $\1$ in the hyperfield:
\begin{align*}
\TT &\cong \hseries{\FF}{\RR}/\val^{-1}(0) &
\TT\RR &\cong \hseries{\FF}{\RR}/\sval^{-1}((\1_\SS,0)) \\
\FF \rtimes \Gamma &\cong \hseries{\FF}{\Gamma}/\fval^{-1}((\1_\FF,0)) &
\PP \rtimes \Gamma &\cong \hseries{\CC}{\Gamma}/\phval^{-1}((\1_\PP,0)) \\
\end{align*}
The proof of these isomorphisms follows from the following lemma.

\begin{lemma}\label{lem:factor+from+hom}
Let $f\colon \FF \rightarrow \HH$ be a surjective homomorphism from a field $\FF$ to a hyperfield $\HH$ satisfying
\begin{equation} \label{cond:preimage+sums}
\forall \alpha, \beta \in \HH \, , \, \gamma \in \alpha \boxplus \beta \, , \, \exists a \in f^{-1}(\alpha) \, , \, b \in f^{-1}(\beta) \text{ such that } a+b \in f^{-1}(\gamma) \, .
\end{equation}
Then $\HH \cong \FF/f^{-1}(\1_\HH)$.
\end{lemma}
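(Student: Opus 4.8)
The plan is to construct an explicit isomorphism $\Phi\colon \FF/f^{-1}(\1_\HH) \to \HH$ and verify it respects both operations. Write $U := f^{-1}(\1_\HH)$; since $f$ is a homomorphism, $U$ is a multiplicative subgroup of $\FF^\times$ (it contains $\1_\FF$, is closed under products because $f$ is multiplicative, and closed under inverses because $f(a^{-1}) = f(a)^{-1}$), so $\FF/U$ is a well-defined factor hyperfield. First I would define $\Phi(\overbar a) := f(a)$ and check this is well-defined: if $\overbar a = \overbar b$ then $b = au$ for some $u \in U$, so $f(b) = f(a)f(u) = f(a)\cdot\1_\HH = f(a)$. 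Injectivity is immediate from the definition of $U$: if $f(a) = f(b)$ with $a,b \neq 0$, then $f(ab^{-1}) = \1_\HH$, so $ab^{-1} \in U$ and $\overbar a = \overbar b$; and $f^{-1}(\0_\HH) = \{0\}$ handles the zero coset. Surjectivity is exactly the hypothesis that $f$ is surjective.

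Next I would check $\Phi$ is a homomorphism of hyperfields. Multiplicativity and the unit/zero conditions are immediate from the corresponding properties of $f$, since $\Phi(\overbar a \odot \overbar b) = \Phi(\overbar{ab}) = f(ab) = f(a)\odot f(b) = \Phi(\overbar a)\odot\Phi(\overbar b)$. For hyperaddition, recall that in $\FF/U$ we have $\overbar a \boxplus \overbar b = \{\overbar c : c \in \overbar a + \overbar b\}$, i.e. $c = a' + b'$ for some $a' \in \overbar a$, $b' \in \overbar b$. Then $\Phi(\overbar c) = f(a'+b') \in f(a')\boxplus f(b') = f(a)\boxplus f(b) = \Phi(\overbar a)\boxplus\Phi(\overbar b)$, using that $f$ is a homomorphism and that $f$ is constant on cosets. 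This gives $\Phi(\overbar a \boxplus \overbar b) \subseteq \Phi(\overbar a)\boxplus\Phi(\overbar b)$, so $\Phi$ is a homomorphism.

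The main obstacle — and the only place condition~\eqref{cond:preimage+sums} is used — is showing $\Phi^{-1}$ is also a homomorphism, equivalently that the reverse inclusion $\Phi(\overbar a)\boxplus\Phi(\overbar b) \subseteq \Phi(\overbar a \boxplus \overbar b)$ holds, so that $\Phi$ is an isomorphism rather than merely a bijective homomorphism. So fix $\gamma \in f(a)\boxplus f(b) = \Phi(\overbar a)\boxplus\Phi(\overbar b)$; applying~\eqref{cond:preimage+sums} with $\alpha = f(a)$, $\beta = f(b)$ yields $a' \in f^{-1}(f(a)) = \overbar a$ and $b' \in f^{-1}(f(b)) = \overbar b$ with $a' + b' \in f^{-1}(\gamma)$, hence $\overline{a'+b'} = \Phi^{-1}(\gamma)$ and $a'+b' \in \overbar a + \overbar b$, so $\Phi^{-1}(\gamma) \in \overbar a \boxplus \overbar b$. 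Thus $\Phi^{-1}(\gamma) \in \overbar a \boxplus \overbar b$, giving the reverse inclusion after applying $\Phi$. Combined with bijectivity from the second paragraph, $\Phi$ is an isomorphism, so $\HH \cong \FF/f^{-1}(\1_\HH)$. I'd close by remarking that in each application the surjective homomorphisms $\val, \sval, \fval, \phval$ visibly satisfy~\eqref{cond:preimage+sums} — given a target element in a hyperaddition, one picks Hahn series with prescribed leading terms whose sum realizes it — which is the routine verification yielding the four displayed isomorphisms.
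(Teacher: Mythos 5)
Your proof is correct and takes essentially the same approach as the paper: define the map $\overbar a \mapsto f(a)$, check it is a well-defined bijective homomorphism, and use condition~\eqref{cond:preimage+sums} to show the inverse is also a homomorphism. The only small addition you make that the paper leaves implicit is the explicit check that $U = f^{-1}(\1_\HH)$ is a multiplicative subgroup, which is fine.
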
 
\begin{proof}
Denote $U = f^{-1}(\1_\HH)$ and consider the map $f_U \colon \FF/U \rightarrow \HH$ that maps $\overbar{a} \mapsto f(a)$.
To see this map is well-defined, consider $a,b \in \FF$ such that $\overbar{a} =\overbar{b}$, i.e. there exists $u \in U$ such that $a = b \cdot u$.
Then
\[
f_U(\overbar{a}) = f(a) = f(b \cdot u) = f(b) \odot f(u) = f(b) \odot \1_\HH = f(b) = f_U(\overbar{b}) \, ,
\]
hence the map $f_U$ is independent of the choice of representative for the cosets.

We claim that $f_U$ defines an isomorphism.
It is straightforward to check that $f_U$ is a surjective homomorphism given that $f$ is.
To see that $f_U$ is injective, suppose that $f(a) = f(b)$.
Then
\[
f(a) \cdot f(b)^{-1} = f(a \cdot b^{-1}) = \1_\HH \, \Rightarrow \, a \cdot b^{-1} \in U \, 
\]
Hence there exists $u \in U$ such that $a = b \cdot u$, and so $\overbar{a} = \overbar{b}$.

As $f_U$ is a bijective morphism, there exists an inverse map $f_U^{-1}\colon \HH \rightarrow \FF/U$ given by $\alpha \mapsto \overbar{f^{-1}(\alpha)}$.
It remains to show this is also a homomorphism.
The only non-trivial condition is that it preserves sums.
Consider some $\gamma \in \alpha \boxplus \beta$, then $f^{-1}_U(\gamma) = \overbar{f^{-1}(\gamma)}$.
By the condition~\eqref{cond:preimage+sums}, there exists $a \in f^{-1}(\alpha)$ and $b\in f^{-1}(\beta)$ such that $a+b \in f^{-1}(\gamma)$.
Coupled with the injectivity of $f_U^{-1}$, this implies 
\[
\overbar{f^{-1}(\gamma)} = \overbar{a+b} \subseteq \overbar{a}+\overbar{b} \subseteq f_U^{-1}(\alpha) \boxplus f^{-1}_U(\beta) \, .
\]
Ranging over all $\gamma \in \alpha \boxplus \beta$ gives $f_U^{-1}(\alpha \boxplus \beta) \subseteq f_U^{-1}(\alpha) \boxplus f^{-1}_U(\beta)$.
\end{proof}

\begin{remark}
Without condition~\eqref{cond:preimage+sums}, Lemma~\ref{lem:factor+from+hom} gives a bijective homomorphism from $\FF/f^{-1}(\1_\HH)$ to $\HH$.
It is important to stress that this is not sufficient for an isomorphism of hyperfields, as the following example highlights.

Let $\WW = \{\1, \0, -\1\}$ be the \emph{weak hyperfield of signs}, with the same multiplication and addition as $\SS$ aside from
\[
\1 \boxplus \1 = -\1 \boxplus - \1 = \{\1, - \1\} \, .
\]
One can check that the sign map $\sgn\colon \RR \rightarrow \WW$ is also a surjective homomorphism, but $\RR/\sgn^{-1}(\1) \cong \SS \not\cong \WW$.
This is because $\sgn\colon \RR \rightarrow \WW$ does not satisfy~\eqref{cond:preimage+sums}: we have $-\1 \in \1 \boxplus \1$ but we cannot find positive reals $a, b$ such that $a+b$ is negative.
\end{remark}

\subsection{Polynomials over hyperfields} \label{sec:polynomial}
We review some key notions of polynomials over hyperfields.
Some care is needed, as we shall see that polynomials behave much worse over hyperfields than over fields.

\begin{definition}
The set of polynomials in $n$ variables over a hyperfield $\HH$ will be denoted $\HH[X_1 \, , \dots , \, X_n]$, where elements of this set are defined as
\begin{equation*}\label{hyperpolynomial}
p(X_1 \, , \dots , \, X_n) :=  \bigboxplus_{\bd \in D} c_\bd \odot X_1^{d_1} \odot \, \cdots \, \odot X_n^{d_n} \, ,
\end{equation*}
where $D \subset \ZZ_{\geq 0}^n$ finite and $c_\bd \in \HH^\times$.
Each polynomial defines a function from $\HH^n$ to $\cP^*(\HH)$ given by evaluation.

The set of \emph{Laurent polynomials} over $\HH$ will be denoted $\HH[X_1^\pm, \dots, X_d^\pm]$, whose elements are polynomials whose exponents $D \subset \ZZ^n$ may take negative values.
Each Laurent polynomial defines a function from $(\HH^\times)^n$ to $\cP^*(\HH)$, as $\0^{k}$ is undefined for negative values of $k$.

We will use multi-index notation $\bX^\bd = X_1^{d_1}\odot\cdots\odot X_n^{d_n}$ to denote (Laurent) monomials.
\end{definition}

\begin{definition}
Let $p = \bigboxplus_{\bd\in D} c_\bd \odot \bX^\bd \in \HH[X_1,\dots,X_n]$. An element $\ba = (a_1 \, , \dots , \, a_n)$ is a \emph{root} of the polynomial if $\0 \in p(\ba) =  \bigboxplus_{\bd\in D} c_\bd \odot \ba^\bd$.
We denote the set of roots of $p$ as
\[
V(p) : = \{ \ba \in \HH^n \mid \0 \in p(\ba)\} \, .
\]
Note that $V(p)$ can also be considered as the (affine) hypersurface defined by $p$.
We will expand on this in Section~\ref{sec:varieties}.
\end{definition}

Let $f:\HH_1 \rightarrow \HH_2$ be a hyperfield homomorphism. 
This induces a map of polynomials $f_* : \HH_1[X_1,\dots,X_n] \rightarrow \HH_2[X_1,\dots,X_n]$ defined as
\[
p = \bigboxplus\nolimits_1  c_\bd \odot_1 \bX^\bd \longmapsto f_*(p)  = \bigboxplus\nolimits_2 f(c_\bd) \odot_2 \bX^{\bd} \, .
\]
We call $f_*(p)$ the \emph{push-forward} of $p$.
By properties of hyperfield homomorphisms, we observe that
\begin{equation*}\label{eq:pushforward+hom}
f(p(\ba)) = f\left(\bigboxplus\nolimits_1  c_\bd \odot_1 \ba^\bd\right) \subseteq \bigboxplus\nolimits_2 f(c_\bd) \odot_2 (f(\ba))^\bd = f_*(p)(f(\ba)) \, .
\end{equation*}
This gives the following result as a direct consequence.

\begin{lemma}\cite{Maxwell:21} \label{pushforwardgeneral}
Let $f: \HH_1 \rightarrow \HH_2$ be a hyperfield homomorphism. 
For $p \in \HH_1[X_1 \, , \dots , \, X_n]$, 
\[
f(V(p)) \subseteq V(f_*(p)) \, .
\]
\end{lemma}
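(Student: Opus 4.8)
The statement to prove is Lemma~\ref{pushforwardgeneral}: if $f\colon \HH_1 \to \HH_2$ is a hyperfield homomorphism and $p \in \HH_1[X_1,\dots,X_n]$, then $f(V(p)) \subseteq V(f_*(p))$.

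\medskip

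\textbf{Proof proposal.} The plan is to chase an element of $f(V(p))$ through the displayed containment that already appears just above the lemma statement, namely $f(p(\ba)) \subseteq f_*(p)(f(\ba))$. First I would take an arbitrary point of $f(V(p))$; by definition of the coordinatewise extension of $f$, such a point has the form $f(\ba) = (f(a_1),\dots,f(a_n))$ for some $\ba = (a_1,\dots,a_n) \in V(p)$. The membership $\ba \in V(p)$ means precisely that $\0_1 \in p(\ba)$, where $p(\ba) = \bigboxplus_1 c_\bd \odot_1 \ba^\bd \subseteq \HH_1$. Applying $f$ to this set, and using that $f(\0_1) = \0_2$, we get $\0_2 = f(\0_1) \in f(p(\ba))$.

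\medskip

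Next I would invoke the inclusion $f(p(\ba)) \subseteq f_*(p)(f(\ba))$, which follows from the homomorphism axioms: $f$ of a hypersum is contained in the hypersum of the $f$-images (applied iteratively to the finite sum over $\bd \in D$), $f$ is multiplicative so $f(c_\bd \odot_1 \ba^\bd) = f(c_\bd) \odot_2 f(\ba)^\bd$, and $f_*(p)$ is by definition $\bigboxplus_2 f(c_\bd) \odot_2 \bX^\bd$. Combining the two displays gives $\0_2 \in f_*(p)(f(\ba))$, which is exactly the statement that $f(\ba) \in V(f_*(p))$. Since $f(\ba)$ was an arbitrary element of $f(V(p))$, this proves the desired containment.

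\medskip

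I do not expect any genuine obstacle here: the lemma is essentially a formal consequence of the definition of $f_*$ together with the subadditivity and multiplicativity of hyperfield homomorphisms, and the key inclusion has already been recorded in the excerpt immediately preceding the lemma. The only mild care needed is in justifying $f(p(\ba)) \subseteq f_*(p)(f(\ba))$ when $D$ has more than two elements: one should note that the extension of a homomorphism's subadditivity to finite hypersums follows by induction using associativity of $\boxplus$ and the identity $f(A \boxplus B) = \bigcup_{a\in A, b\in B} f(a\boxplus b) \subseteq \bigcup f(a)\boxplus f(b) = f(A)\boxplus f(B)$ for subsets. Since that computation is exactly what produces the pre-stated display, I would simply cite it rather than reprove it. The whole argument is three lines.
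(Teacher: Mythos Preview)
Your proposal is correct and is exactly the paper's approach: the paper records the inclusion $f(p(\ba)) \subseteq f_*(p)(f(\ba))$ immediately before the lemma and then declares the lemma a direct consequence, which is precisely the element-chase you describe.
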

For general hyperfield homomorphisms, this containment is strict.
However, for a particularly nice class of homomorphisms we will be able to deduce an equality in Lemma~\ref{pushforwardgeneral}.
These will be the class of relatively algebraically closed homomorphisms and will be the focus of Section~\ref{sec:rac}.

\subsection{Hyperfield varieties} \label{sec:varieties}

We briefly touched on the notion of an affine hypersurface over $\HH$ in the previous section.
More general varieties are trickier as polynomial ideals over hyperfields have a number of pitfalls that we document at the end of this section. 
As such, we must restrict ourselves to images of polynomial ideals over fields.
It will be useful for us to consider various notions of varieties over a hyperfield.

\paragraph{Affine varieties}
Given a polynomial $p \in \HH[X_1, \dots, X_n]$, its associated \emph{affine hypersurface} is
\[
V(p) := \SetOf{\ba \in \HH^n}{\0 \in p(\ba)} \, .
\]
Given a set of polynomials $J \subseteq \HH[X_1, \dots, X_n]$, its associated \emph{affine prevariety} is
\[
V(J) := \SetOf{\ba \in \HH^n}{\ba \in V(p) \text{ for all } p \in J} = \bigcap_{p \in J} V(p) \, .
\]
If there exists some ideal $I \subseteq \FF[X_1,\dots,X_n]$ over a field $\FF$ and homomorphism $f\colon \FF \rightarrow \HH$ such that $J = f_*(I) := \SetOf{f_*(p)}{p \in I}$, we will call $V(J)$ an \emph{affine variety}.

\begin{remark}
We briefly note that there is a well-defined notion of a tropical variety that does not depend on any underlying ideal over a field.
This differs from the notion above, where the varieties $V(J)$ such that $J = \val_*(I)$ for some ideal $I$ over a field $\FF$ would be known as \emph{realisable} tropical varieties.
For general hyperfields, we do not have such intrinsic notions of varieties yet and so restrict ourselves to those that are `realisable'.
\end{remark}


\paragraph{Projective varieties}
As multiplication is single-valued, projective space and projective varieties over a hyperfield are defined much the same as over fields.
As such, we shall skip over proofs of standard facts and refer to~\cite{CLO}, where proofs can be easily adapted to hyperfields.
We define \emph{$n$-dimensional projective space} over $\HH$ as
\[
\PP^n(\HH) = \faktor{\HH^{n+1} \setminus \{\zerovec\}}{\sim} \, ,
\]
where $\sim$ identifies scalar multiples, i.e. $\ba \sim \lambda \odot \ba$ for all $\lambda \in \HH^\times$.


A polynomial $p \in \HH[X_0, \dots, X_n]$ is \emph{homogeneous} if each monomial has the same degree.
If $p$ is homogeneous, then $\ba \in \HH^{n+1}$ is a root if and only if $\lambda \odot \ba$ is a root.
As such, we can consider roots of homogeneous polynomials as hypersurfaces in projective space.
Given a homogeneous polynomial $p \in \HH[X_0, X_1, \dots, X_n]$, its associated \emph{projective hypersurface} is
\begin{align} \label{eq:proj+hyp}
PV(p) :=& \SetOf{\ba \in \PP^n(\HH)}{\0 \in p(\ba)} = \faktor{V(p) \setminus \{\zerovec\}}{\sim} \, .
\end{align}

Given a set of homogeneous polynomials $J \subseteq \HH[X_0, X_1, \dots, X_n]$, its associated \emph{projective prevariety} is
\begin{align*}
PV(J) :=& \bigcap_{p \in J} PV(p) = \faktor{V(J) \setminus \{\zerovec\}}{\sim} \, .
\end{align*}
If there exists an ideal $I \subseteq \FF[X_0,X_1, \dots,X_n]$ over a field $\FF$ and homomorphism $f\colon \FF \rightarrow \HH$ such that $J = f_*(I)$, we will call $PV(J)$ a \emph{projective variety}.

\paragraph{Torus subvarieties}
Tropical geometry generally focuses on subvarieties of the torus, as these give rise to polyhedral complexes in $\RR^n$.
As such, we briefly recall the setup for this family of varieties also.
Recall the \emph{$n$-dimensional torus} over $\HH$ is $(\HH^\times)^n$.
Given a Laurent polynomial $p \in \HH[X_1^\pm, \dots, X_n^\pm]$, its associated \emph{torus hypersurface} is
\[
V^\times(p) := \SetOf{\ba \in (\HH^\times)^n}{\0 \in p(\ba)} \, .
\]
Given a set of Laurent polynomials $J \subseteq \HH[X_1^\pm, \dots, X_n^\pm]$, its associated \emph{torus prevariety} is
\[
V^\times(J) := \SetOf{\ba \in (\HH^\times)^n}{\ba \in V^\times(p) \text{ for all } p \in J} = \bigcap_{p \in J} V^\times(p) \, .
\]
If $J = f_*(I)$ for some Laurent ideal $I \subseteq \FF[X_1^\pm, \dots,X_n^\pm]$ over a field $\FF$ and homomorphism $f\colon \FF \rightarrow \HH$, we will call $V^\times(J)$ a \emph{torus subvariety}.

We will generally prove results for one of these three families of varieties and then deduce the analogous result for the other families.
This requires a dictionary for how to move between these families.
This is very similar to the situation over fields, therefore we shall state results without proof.

Affine space, projective space and the torus are related by the following inclusions
\[
\begin{array}{ccccc}
(\HH^\times)^n & \overset{i}{\hookrightarrow} & \HH^n & \overset{j}\hookrightarrow & \PP^n(\HH) \\
(a_1, \dots a_n) & \mapsto & (a_1, \dots a_n) &\mapsto & [\1 : a_1 : \dots : a_n]
\end{array} \, .
\]
The first inclusion is canonical, and so we will drop the $i$.
We denote $U = j(\HH^n)$ and $U^\times = j((\HH^\times)^n)$ as the affine and torus chart in $\PP^n(\HH)$ respectively.


\begin{lemma}\label{lem:aff+as+proj}
    Let $J \subseteq \HH[X_1, \dots, X_n]$ be a collection of polynomials.
    The affine prevariety $V(J) \subseteq \HH^n$ can be identified with the projective prevariety $PV(\overbar{J}) \subseteq \PP^n(\HH)$ restricted to the affine chart $U$, where $\overbar{J}$ is the homogenization of $J$: 
    \[
    \overbar{J} := \SetOf{\bigboxplus_{d \in D} c_{d}\odot X_0^{\deg(p) - \sum_{i=1}^n d_i} \odot X_1^{d_1} \odot \cdots \odot X_n^{d_n}}{p = \bigboxplus_{\bd \in D} c_\bd\odot \bX^{\bd} \in J} \subseteq \HH[X_0,X_1, \dots, X_n] \, .
    \]
\end{lemma}


\begin{lemma}\label{lem:torus+as+proj}
    Let $J \subseteq \HH[X_1^{\pm}, \dots, X_n^{\pm}]$ be a collection of Laurent polynomials.
    Then $V^\times(J) = V(J_{\aff}) \cap (\HH^\times)^n$ where $J_{\aff}$ is the affinization of $J$:
    \[
    J_{\aff} := \SetOf{\bigboxplus_{\bd \in D} c_\bd\odot X_1^{d_1+e_1} \odot \cdots \odot X_n^{d_n + e_n}}{\bigboxplus_{\bd \in D} c_\bd\odot \bX^{\bd} \in J \, , \, e_i = \min_{d \in D}(d_i, 0)} \subseteq \HH[X_1, \dots, X_n] \, .
    \]
    Moreover, the torus prevariety $V^\times(J) \subseteq (\HH^\times)^n$ can be identified with the projective prevariety $PV(\overline{J_{\aff}})$ restricted to the torus chart $U^\times$.
\end{lemma}

We end this section with a compilation of issues with polynomial rings and ideals over hyperfields for algebraic geometry, and why we restrict ourselves to polynomial ideals coming from fields.
This is not an exhaustive list: see \cite[Section 4.1]{JJun} for additional subtleties, as well as an alternative approach to hyperfield geometry.

\begin{remark}[Polynomial multiplication is multi-valued]\label{rem:hyperpoly}
    The set of polynomials over a field $\FF[X_1,\dots,X_n]$ has the structure of a ring.
    However, the set of polynomials over a general hyperfield $\HH[X_1,\dots,X_n]$ is not a hyperring, as multiplication is not single-valued.
  Instead, both addition and multiplication are multi-valued, defined as
      \begin{align*}
     \left(\bigboxplus_{\bd \in D} a_\bd \odot \bX^\bd\right) \boxplus \left(\bigboxplus_{\bd' \in D'} b_{\bd'} \odot \bX^{\bd'}\right) &= \SetOf{\bigboxplus_{\be \in D \cup D'} c_\be \odot X^\be}{c_\be \in a_\be \boxplus b_{\be}} \, ,  \nonumber\\
    \left(\bigboxplus_{\bd \in D} a_\bd \odot \bX^\bd\right) \boxdot \left(\bigboxplus_{\bd' \in D'} b_{\bd'} \odot \bX^{\bd'}\right) &= \SetOf{\bigboxplus_{\be \in D + D'} c_\be \odot X^\be}{c_\be \in \bigboxplus_{\be = \bd + \bd'} a_\bd \odot b_{\bd'}} \, .
    \end{align*} 
    In the univariate case, it has the structure of a \emph{superring}~\cite{Ameri+Eyvazi+Hoskova-Mayerova:2019}.
    However even in the univariate case, this structure has a lot of discrepancies such as not being distributive~\cite{Ameri+Eyvazi+Hoskova-Mayerova:2019}, multiplication not being associative, and $\HH[X_1, \dots, X_n]$ not being free in the sense of universal algebra~\cite{Baker+Lorscheid:18}.
\end{remark}

\begin{remark}[Roots of polynomials not closed under taking ideals] \label{rem:hyperideal}
If we allow for multi-valued multiplication, we can still attempt to define an ideal as we usually would for polynomial rings.
We define a \emph{hyperfield polynomial ideal} $I \subseteq \HH[X_1, \dots, X_n]$ to be a subset satisfying
\begin{enumerate}[label=(I\arabic*)]
    \item $\0 \in I$,
    \item If $f \in I$, then $\lambda \boxdot f \subseteq I$ for all $\lambda \in \HH[X_1, \dots, X_n]$,\label{eq:ideal+multiplication}
    \item If $f, g \in I$, then $f \boxplus g \subseteq I$. \label{eq:ideal+addition}
\end{enumerate}
It becomes quickly apparent that such a definition is too coarse for defining varieties over hyperfields.

Over a field, the common roots of a set of polynomials are also roots of all polynomials in the ideal they generate.
This is not true for hyperfield polynomial ideals.
Considering the additive closure in \ref{eq:ideal+addition}, we may have two polynomials $f,g \in I$ with a common root $\ba \in V(f) \cap V(g)$, but there exists some $h \in f \boxplus g \subseteq I$ such that $\ba \notin V(h)$.
This already occurs in simple examples: consider the univariate affine polynomials over $\SS$:
\[
f = X \boxplus \1 \, , \quad g = -X \boxplus -\1 \, , \quad h = X \boxplus -\1 \in f \boxplus g \, .
\]
Both $f$ and $g$ have a unique root $X = -\1$, but the unique root of $h$ is $X = \1$, despite $h \in f \boxplus g$.
A similar phenomenon holds for the multiplicative action in \ref{eq:ideal+multiplication}, where $f \in I$ has a root $\ba \in V(f)$, but there exists some $\lambda \in \HH[X_1, \dots, X_n]$ and $h \in \lambda \boxdot f$ such that $\ba \notin V(h)$.
For example, consider the univariate polynomials over $\SS$:
\[
f = X^2 \boxplus -X \boxplus \1 \, , \quad h = X^3 \boxplus X^2 \boxplus X \boxplus \1 \in (X \boxplus \1) \boxdot f \, .
\]
While $f$ has the unique root $X = \1$, the unique root of $h$ is $X = -\1$.
Similar examples can easily be constructed for other hyperfields including $\TT$.
\end{remark}

\begin{remark}[Hyperfield polynomial ideals are too restrictive] \label{rem:hyperiedal+restrictive}
In the case where $\HH = \TT$, we have a good notion of what tropical varieties and tropical ideals should be~\cite{MR:18}.
We show that the only possible hyperfield polynomial ideals of $\TT[X_1, \dots, X_n]$ are those generated by monomials, and hence are far too restrictive.

Let $f = \bigboxplus_D c_{\bd} \odot \bX^\bd$ be a generator for the polynomial ideal $I \subseteq \TT[X_1, \dots, X_n]$.
Then \ref{eq:ideal+addition} implies that
\[
f \boxplus f = \SetOf{\bigboxplus_{\bd \in D} a_{\bd} \odot \bX^\bd}{a_\bd \in [\infty, c_\bd]} \subseteq I \, .
\]
In particular, the monomial $\bX^\bd$ is contained in $I$ for all $\bd \in D$.
As we can write $f$ as a linear combination of these monomials, we can replace $f$ in the generating set by the monomials $\{\bX^\bd \mid \bd \in D\}$.
Repeating this over all generators, we obtain that $I$ is generated by monomials.
In particular, its associated variety $V(I) \subseteq \TT^n$ can only be a collection of coordinate subspaces.

The situation is even worse when considering hyperfield Laurent polynomial ideals $I \subseteq \TT[X_1^\pm, \dots, X_n^\pm]$, a common setup in tropical geometry.
In this case, each of the generating monomials of $I$ can be inverted and so $I$ is either the zero ideal or all of $\TT[X_1^\pm, \dots, X_n^\pm]$.
\end{remark}

These examples demonstrate that hyperfield polynomial ideals are too large to be compatible with algebraic varieties.
In contrast, collections of polynomials $f_*(J)$ coming from an ideal $J$ over a field are smaller as they are not closed under addition and the multiplicative action of $\HH[X_1, \dots, X_n]$, but their varieties are better behaved as we shall see in Section~\ref{sec:funda_thm}.
It remains unclear what `ideal-like' structure should be used for defining varieties over general hyperfields: we will briefly discuss this in Section~\ref{Sec:further-qs}.

\section{Relatively algebraically closed homomorphisms}\label{sec:rac}

In this section, we recall the definition of a relatively algebraically closed (RAC) homomorphism between hyperfields, first proposed in~\cite{Maxwell:21}.
We then exhibit a number of new families of RAC homomorphisms, including ways of constructing new maps from previously known RAC maps.

To first motivate the definition, we recall the notion of an algebraically closed hyperfield.
\begin{definition}
A hyperfield $\HH$ is called \emph{algebraically closed} if every univariate polynomial has a root in $\HH$. 
\end{definition}

\begin{lemma} \label{lem:surj-alg-clsd}
Let $f:\HH_1 \rightarrow \HH_2$ be a surjective hyperfield homomorphism.
If $\HH_1$ is algebraically closed, then $\HH_2$ is also algebraically closed.
\end{lemma}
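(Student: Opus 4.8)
The plan is to take an arbitrary univariate polynomial $q = \bigboxplus_{i \in D} \beta_i \odot Y^i \in \HH_2[Y]$ and produce a root for it in $\HH_2$ by lifting $q$ to a polynomial over $\HH_1$, using that $\HH_1$ is algebraically closed to find a root there, and then pushing that root forward via $f$. The key point that makes this work is Lemma~\ref{pushforwardgeneral}: the image of $V(p)$ under $f$ sits inside $V(f_*(p))$, so a root upstairs always maps to a root downstairs.

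First I would use surjectivity of $f$ to choose, for each $i \in D$, a preimage $b_i \in \HH_1^\times$ with $f(b_i) = \beta_i$; note $b_i \neq \0_1$ since $f^{-1}(\0_2) = \{\0_1\}$ and $\beta_i \neq \0_2$. Form the polynomial $p = \bigboxplus_{i \in D} b_i \odot X^i \in \HH_1[X]$, which is genuinely a polynomial over $\HH_1$ (all coefficients are nonzero) and satisfies $f_*(p) = q$ by construction. Since $\HH_1$ is algebraically closed, $p$ has a root $a \in \HH_1$, i.e. $\0_1 \in p(a)$. Then by Lemma~\ref{pushforwardgeneral}, $f(a) \in f(V(p)) \subseteq V(f_*(p)) = V(q)$, so $f(a)$ is a root of $q$ in $\HH_2$. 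As $q$ was arbitrary, $\HH_2$ is algebraically closed.

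There is essentially no hard part here; the statement is a direct consequence of surjectivity (to lift coefficients) plus the already-established push-forward inclusion. The only small point to be careful about is the bookkeeping around nonzero coefficients: the definition of a polynomial over a hyperfield requires $c_\bd \in \HH^\times$, so one must observe that lifting a nonzero coefficient yields a nonzero coefficient, which follows from the fact (noted just after the definition of homomorphism) that $\0_1$ is the unique preimage of $\0_2$. One could also remark that this lemma is the contrapositive-friendly direction one wants: it says algebraic closedness is inherited downward along surjections, which is exactly the reason one cannot simply transport a root-finding property across homomorphisms in the other direction, thereby motivating the RAC notion introduced next.
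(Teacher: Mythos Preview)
Your proof is correct and follows essentially the same approach as the paper: lift the polynomial via surjectivity, find a root upstairs using that $\HH_1$ is algebraically closed, and push it down using Lemma~\ref{pushforwardgeneral}. Your added care about nonzero coefficients is a nice detail the paper leaves implicit, and the closing commentary on motivation for RAC is accurate but extraneous to the proof itself.
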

\begin{proof}
Consider an arbitrary $p \in \HH_2[X]$.
As $f$ is surjective, there exists $q \in \HH_1[X]$ such that $f_*(q) = p$.
As $\HH_1[X]$ is algebraically closed, there exists some $a \in V(q)$.
By Lemma~\ref{pushforwardgeneral}, $f(a) \in V(f_*(q)) = V(p)$.
\end{proof}

\begin{example} \label{Ex:trop-ph-alg-clsd}
    Given a factor hyperfield $\HH = \FF/U$, the quotient map $\tau\colon\FF \rightarrow \HH$ is surjective.
    Hence, if $\FF$ an algebraically closed field, $\HH$ is also algebraically closed.
    This immediately tells us that many of our examples of hyperfields are algebraically closed, including $\KK, \TT$ and $\PP$.

    We can extend this further to other hyperfields via Lemma \ref{lem:surj-alg-clsd}.
    For example, the identity map $\mathrm{id}: \PP \rightarrow \Phi$ is a surjective hyperfield homomorphism and hence $\Phi$ is also algebraically closed.
\end{example}

Let us briefly consider Lemma~\ref{lem:surj-alg-clsd} for fields.
Let $f \colon \FF_1 \rightarrow \FF_2$ be a surjective homomorphism where $\FF_1$ is algebraically closed and $p \in \FF_1[X]$ an arbitrary polynomial of degree $d$.
As $p$ splits into linear factors over $\FF_1$, homomorphism properties show $f_*(p)$ also splits into linear factors over $\FF_2$,
\[
p = \prod_{i=1}^d (X - a_i) \, \Rightarrow \, f_*(p) = \prod_{i=1}^d (X - f(a_i)) \, .
\]
This gives a much stronger property than simply that $\FF_2$ is algebraically closed, $f$ gives a bijection between the roots of $p$ and $f_*(p)$ counted with multiplicity.
In particular, we have $f(V(p)) = V(f_*(p))$, a strengthening of Lemma~\ref{pushforwardgeneral}.

One key property that makes this work is that polynomials over algebraically closed fields have exactly degree many roots.
As the following example from~\cite{Baker+Lorscheid:18} demonstrates, polynomials over hyperfields may have many more roots than their degree.

\begin{example} \label{ex:infinite+roots}
Consider the degree two polynomial $P(X) = X^2 \boxplus X \boxplus \1 \in \PP[X]$ over the phase hyperfield.
Despite being a degree two polynomial, we show that it has infinitely many roots.
Consider the phase homomorphism $\ph: \CC \rightarrow \PP$, and the family of polynomials $p_k \in \CC[X]$ with root sets
        \[
        p_k(X) = X^2 + X + k \in \CC[X] \; , \; k \in \left[\frac{1}{4}, +\infty \right) \quad , \quad V(p_k) = \left\{\frac{-1 \pm i\sqrt{4k -1}}{2}\right\}  \, .
        \]
    Observe that the pushforward of each polynomial in the phase map is $\ph_*(p_k) = P$, and so does not depend on the choice of $k$.
    However, the image of the roots do depend on $k$, as 
    \[
    \ph(V(p_k)) = \SetOf{e^{i\theta} \in \PP}{\theta \in \tan^{-1}\left(\pm\sqrt{4k -1}\right) \cap \left(\frac{\pi}{2}, \frac{3\pi}{2}\right)} \, .
    \]
    As each of these are roots of $P$, as we vary $k$ we see that $\0 \in P(e^{i\theta})$ for all $\pi / 2 < \theta < 3\pi /2$.
    As such, $P$ has infinitely many roots, all coming from different polynomials in the pre-image of the phase map.
    In particular, there are many roots of $P$ that we cannot lift to a root of $p_k$, despite both $\CC$ and $\PP$ being algebraically closed.
\end{example}

Motivated by this example, we would like to restrict to maps between hyperfields that give a bijection between roots of univariate polynomials.
This motivates the definition of a relatively algebraically closed homomorphism.
\begin{definition}\label{def:RAC}
    Let $f \colon \HH_1 \rightarrow \HH_2$ be a surjective hyperfield homomorphism.
    We say that $f$ is \emph{relatively algebraically closed (RAC)} if for all univariate polynomials $p \in \HH_1[X]$ and every root $\beta \in V(f_\star(p))$, there exists $\alpha \in f^{-1}(\beta)$ such that $\alpha \in V(p)$.
\end{definition}
Note that the RAC property gives the opposite inclusion of Lemma~\ref{pushforwardgeneral} for univariate polynomials, i.e. $V(f_*(p)) \subseteq f(V(p))$.
We will show in Section~\ref{sec:Kapranov} that this extends to multivariate polynomials, giving a hyperfield analogue of Kapranov's theorem.


\begin{example}\label{ex:alg-clsd-kras-RAC}
    Recall that every hyperfield $\HH$ has a trivial homomorphism to $\KK$, and that every univariate polynomial $p \in \KK[X]$ has a root.
    This gives an alternative characterisation of algebraically closed: a hyperfield $\HH$ is algebraically closed if and only if the trivial homomorphism $\omega \colon \HH \rightarrow \KK$ is relatively algebraically closed.
\end{example}
\begin{example}
    Given an algebraically closed valued field $(\FF, \val)$, the valuation map $\val\colon \FF \rightarrow \KK \rtimes \Gamma$ is RAC.
    Proofs of this are given in~\cite[Lemma 3.14]{MB} and~\cite[Proposition 3.1.5]{MS:21}, and in both cases this property is used to prove Kapranov's theorem.
\end{example}
\begin{example}
    The homomorphism $-\log(|\cdot|) \colon \TT\CC \rightarrow \TT$ introduced in Example~\ref{ex:trop+complex+map} is RAC.
    This was proved in~\cite{Maxwell:21} as a new non-trivial example of a RAC map, however we shall reprove this as a corollary in Section~\ref{sec:trop+ext+rac}.
\end{example}


\begin{example}\label{Ex:not-RAC}
    Many natural homomorphisms between hyperfields are \emph{not} RAC, as the following examples demonstrate.
    \begin{itemize}
        \item The sign homomorphism $\sgn: \RR \rightarrow \SS $ is \emph{not} RAC.
        Consider the irreducible polynomial $p(X) = X^2 -X +1 \in \RR[X]$. 
        Its pushforward $\sgn_*(p) = X^2 \boxplus -X \boxplus \1 \in \SS[X]$ has a root at $\1$ despite $p$ having no roots.

        
        
        \item The phase homomorphism $\ph: \CC \rightarrow \PP$ is \emph{not} RAC as Example~\ref{ex:infinite+roots} demonstrates.
    \end{itemize}
\end{example}

\subsection{Relatively algebraically closed maps from fields} \label{sec:field+to+stringent}

In the following, we observe that if $f \colon \FF \rightarrow \HH$ is a surjective homomorphism from an algebraically closed field to a stringent hyperfield, it is necessarily relatively algebraically closed.
This result is deduced from a number of existing results in the literature.

We briefly recall the notion of multiplicity for roots of univariate polynomials over hyperfields.
Recall from Remark~\ref{rem:hyperpoly} that univariate polynomials have a hypermultiplication $\boxdot$ defined on them, giving $\HH[X]$ the structure of a superring. 
Let $p \in \HH[X]$, the \emph{multiplicity} $\mult_a(p)$ of an element $a \in \HH$ is defined as,
\begin{equation}\label{multidef}
\mult_a(p) = 
\begin{cases}
0 &  a \notin V(p) \\ 
1 + \mathrm{max}\SetOf{\mult_a(q)}{p(X) \in (X \boxplus -a) \boxdot q(X)} & a \in V(p)
\end{cases}
\end{equation}
Due to multiplication of polynomials over $\HH$ being multivalued, the polynomial $q(X)$ in \eqref{multidef} is not necessarily unique.
As such, the definition of multiplicity is necessarily recursive; see \cite{Baker+Lorscheid:18} for details on the original definition and examples of the non-uniqueness.

\begin{definition}\label{def:mult+bound}
    A hyperfield $\HH$ satisfies the \emph{multiplicity bound} if for all univariate polynomials $p \in \HH[X]$,
    \begin{equation*}
        \sum_{a \in \HH} \mult_a(p) \leq \deg(p) \, .
    \end{equation*}
    If this inequality is an equality, we say $\HH$ satisfies the \emph{multiplicity equality}.
\end{definition}





With this definition in hand, we make use of the following theorem that gives sufficient conditions for a map to be RAC.

\begin{theorem}\cite[Theorem 4.10]{Maxwell:21} \label{thm:field+hom+rac}
    Let $f\colon \FF \rightarrow \HH$ be a surjective homomorphism.
    If $\FF$ is algebraically closed and $\HH$ satisfies the multiplicity bound, then $f$ is relatively algebraically closed. 
\end{theorem}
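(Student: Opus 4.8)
The plan is to exploit algebraic closedness of $\FF$ to split $p$ into linear factors, push that factorisation forward through $f$, and then invoke the multiplicity bound on $\HH$ purely as a counting constraint that prevents $V(f_*(p))$ from being strictly larger than $f(V(p))$. The first ingredient I would set up is the elementary compatibility of pushforward with multiplication of polynomials: for $p_1, p_2 \in \FF[X]$ one has $f_*(p_1 p_2) \in f_*(p_1) \boxdot f_*(p_2)$. Indeed, writing $p_1 p_2 = \bigboxplus_i c_i X^i$ with $c_i = \sum_{j+k=i} a_j b_k$, the homomorphism axioms give $f(c_i) \in \bigboxplus_{j+k=i} f(a_j) \odot f(b_k)$, which is precisely the membership condition defining $\boxdot$ in Remark~\ref{rem:hyperpoly}. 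Since $\0_\FF$ is the only element of $\FF$ mapped to $\0_\HH$, leading coefficients survive the pushforward, so $\deg f_*(q) = \deg q$ for every nonzero $q \in \FF[X]$.

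Now fix $p \in \FF[X]$ of degree $d$ and factor $p = c \prod_{i=1}^{d}(X - a_i)$ over $\FF$, with $c \in \FF^\times$. Fix a value $\beta \in \HH$ and relabel the roots so that $f(a_1) = \dots = f(a_m) = \beta$, where $m = m_\beta := \#\{i : f(a_i) = \beta\}$. Peeling off one linear factor at a time, set $q_0 = p$ and $q_{i-1} = (X - a_i)\, q_i$ for $i = 1, \dots, m$. Since $f_*(X - a_i) = X \boxplus -f(a_i) = X \boxplus -\beta$ for $i \le m$, the compatibility statement above gives $f_*(q_{i-1}) \in (X \boxplus -\beta) \boxdot f_*(q_i)$ for each such $i$. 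Feeding this chain of containments into the recursive definition~\eqref{multidef} of multiplicity yields $\mult_\beta(f_*(p)) \ge m + \mult_\beta(f_*(q_m)) \ge m_\beta$. Crucially, splitting off only a single linear factor at each stage means we never need associativity of $\boxdot$, which fails in general.

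Finally I would sum over the distinct values occurring among $f(a_1), \dots, f(a_d)$, namely over $\beta \in f(V(p))$. On one hand, $\sum_{\beta \in f(V(p))} \mult_\beta(f_*(p)) \ge \sum_\beta m_\beta = d$; on the other, the multiplicity bound gives $\sum_{a \in \HH} \mult_a(f_*(p)) \le \deg f_*(p) = d$. Hence both inequalities are equalities, which forces $\mult_a(f_*(p)) = 0$, i.e. $a \notin V(f_*(p))$, for every $a \notin f(V(p))$. Together with the inclusion $f(V(p)) \subseteq V(f_*(p))$ from Lemma~\ref{pushforwardgeneral}, this yields $V(f_*(p)) = f(V(p))$. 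In particular, each root $\beta \in V(f_*(p))$ equals $f(\alpha)$ for some $\alpha \in V(p)$, so there is $\alpha \in f^{-1}(\beta)$ with $\alpha \in V(p)$; that is, $f$ is relatively algebraically closed.

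The step I expect to be the main obstacle is the middle one: verifying carefully that the chain $f_*(q_{i-1}) \in (X \boxplus -\beta) \boxdot f_*(q_i)$ really propagates through~\eqref{multidef} to give $\mult_\beta(f_*(p)) \ge m_\beta$. This needs the observation that each $q_i$ is a genuine polynomial of the expected degree (so the $\boxdot$-product behaves as intended) together with a careful reading of~\eqref{multidef}, whose defining maximum ranges over \emph{all} admissible cofactors rather than a distinguished one. By contrast, the factorisation is immediate from algebraic closedness of $\FF$, and the concluding counting argument is routine once the multiplicity lower bound is established.
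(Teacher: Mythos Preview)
Your argument is correct. The paper does not supply its own proof of this statement; it is quoted from \cite{Maxwell:21}, and the paragraph following the theorem only sketches the shape of the more general result there, in which ``algebraically closed field'' is replaced by a hyperfield satisfying the multiplicity equality and the \emph{inheritance property}. Your proof is exactly the specialisation of that scheme to fields: the factorisation $p = c\prod_i (X - a_i)$ is the inheritance property (trivially available over a field), the identity $\sum_\beta m_\beta = d$ is the multiplicity equality, and the peeling-off step $f_*(q_{i-1}) \in (X \boxplus -\beta) \boxdot f_*(q_i)$ feeding into the recursion~\eqref{multidef} is the mechanism that produces the lower bound $\mult_\beta(f_*(p)) \ge m_\beta$. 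So your route coincides with the one the paper points to.

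One small point worth making explicit in the middle step: the recursion~\eqref{multidef} only applies when $\beta$ is actually a root of $f_*(q_{i-1})$, and you should record that this holds because $a_i \in V(q_{i-1})$ with $f(a_i)=\beta$, whence $\beta \in f(V(q_{i-1})) \subseteq V(f_*(q_{i-1}))$ by Lemma~\ref{pushforwardgeneral}. With that observation the inductive chain goes through cleanly.
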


This theorem is a special case of a more general statement given in~\cite{Maxwell:21}, where one replaces algebraically closed field with a hyperfield that satisfies the multiplicity equality and the \emph{inheritance property}.
Informally, the inheritance property guarantees one can write a polynomial as a product of linear factors given by its roots with some other lower degree polynomial.
Currently, this property is not well understood over hyperfields, but is trivially satisfied by fields.



In comparison, multiplicities and the multiplicity bound have been much more studied for natural classes of hyperfields~\cite{Liu:20,GrossGunn:23,AGT:23}. 
In particular, we note the following result of Gunn:

\begin{theorem}\cite[Corollary E]{Gunn:22}\label{thm:stringent-multi-bound}
All stringent hyperfields satisfy the multiplicity bound.
\end{theorem}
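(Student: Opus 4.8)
The statement to prove is Theorem~\ref{thm:stringent-multi-bound} (Gunn's Corollary E): every stringent hyperfield satisfies the multiplicity bound. By Bowler and Su's classification (Theorem~\ref{thn:stringent+classification}), every stringent hyperfield is isomorphic to one of $\KK \rtimes \Gamma$, $\SS \rtimes \Gamma$, or $\FF \rtimes \Gamma$ for an ordered abelian group $\Gamma$, so it suffices to establish the bound for each of these three families of tropical extensions. The common thread is that a stringent hyperfield $\HH \rtimes \Gamma$ has hyperaddition that is ``almost single-valued'': the sum $(c_1,g_1) \boxplus (c_2,g_2)$ is a genuine set only when $g_1 = g_2$ and the leading terms cancel in the base, and even then the ambiguity is controlled by the base hyperfield $\HH \in \{\KK, \SS, \FF\}$, each of which is itself very simple.

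\textbf{Key steps.} First I would set up the combinatorial bookkeeping: for a polynomial $p = \bigboxplus_{\bd} c_\bd \odot X^{d} \in (\HH \rtimes \Gamma)[X]$, write each coefficient as $c_\bd = (\gamma_\bd, v_\bd)$ with $\gamma_\bd \in \HH^\times$ and $v_\bd \in \Gamma$, and observe that a candidate root $a = (\lambda, w)$ forces, via the min/leading-term rules of the tropical extension, that the minimum of $v_\bd + d\cdot w$ over the support is attained at least twice --- this is the usual tropical/Newton-polygon constraint --- and that moreover the leading-base-coefficients $\gamma_\bd \odot \lambda^d$ indexed by the terms achieving that minimum must sum to $\0_\HH$ in $\HH$. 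So a root of $p$ in $\HH \rtimes \Gamma$ decomposes into a choice of ``slope'' $w$ (a vertex edge of the Newton polygon, of which there are at most $\deg p$ when counted with lattice length) together with a root of an induced lower-degree polynomial over the base $\HH$. The second step is to make this into a genuine induction on degree: factoring out one linear factor $X \boxplus -a$ reduces the degree by one, and one checks that the multiplicity function~\eqref{multidef} is additive enough along this factorization that the total multiplicity of roots sharing a given slope is bounded by the corresponding lattice length, while roots with distinct slopes partition the Newton polygon. Summing the lattice lengths of all edges of the lower hull gives exactly $\deg p$, yielding $\sum_{a} \mult_a(p) \le \deg p$. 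The third step is the three base cases $\HH = \KK, \SS, \FF$: for $\FF$ a field this is classical (polynomials over a field have at most $\deg$ roots with multiplicity); for $\SS$ this is Descartes' rule of signs, proved in Baker--Lorscheid~\cite{Baker+Lorscheid:18}; and for $\KK$ it is immediate since every nonzero element is a root with the bound following trivially from their multiplicity conventions.

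\textbf{Main obstacle.} The delicate point is handling multiplicity rather than just the set of roots, because multiplication of polynomials over $\HH \rtimes \Gamma$ is multivalued and the factor $q$ in~\eqref{multidef} is not unique; one must show that \emph{no} choice of successive factorizations can inflate the total multiplicity beyond $\deg p$. I expect the cleanest route is to track, for each factorization $p \in (X \boxplus -a) \boxdot q$, how the Newton polygon of $q$ sits inside that of $p$ --- removing a root of slope $w$ shortens the edge of slope $w$ by one lattice step (or removes a vertex), and this is uniform over all valid choices of $q$ --- so that the recursion terminates with the right count regardless of the non-uniqueness. Verifying that the leading-coefficient/base-hyperfield cancellation interacts correctly with $\boxdot$ in each of the three cases, and that the $\SS$ case genuinely reduces to the Baker--Lorscheid multiplicity computation rather than merely to a root count, is where the real work lies; the reduction to the classification and the polyhedral bookkeeping are then routine.
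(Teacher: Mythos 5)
The paper does not actually prove this statement: it cites it as \cite[Corollary E]{Gunn:22} and uses it as a black box, so there is no internal proof to compare against. Assessed on its own terms, your outline does follow the architecture of Gunn's argument at a high level --- reduce via the Bowler--Su classification to the three families $\KK\rtimes\Gamma$, $\SS\rtimes\Gamma$, $\FF\rtimes\Gamma$; observe via Lemma~\ref{lem:extension+root} that a root $(\lambda,w)$ of $p$ forces the $\Gamma$-parts to achieve a tie and the corresponding $\HH$-coefficients of the initial form to sum to $\0_\HH$; then decompose the count edge-by-edge over the Newton polygon, invoking the multiplicity bound for $\KK$ (trivial), $\SS$ (Descartes' rule in~\cite{Baker+Lorscheid:18}), and $\FF$ (classical) on each initial form. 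The polyhedral bookkeeping (sum of lattice lengths equals $\deg p$) is correct.

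Where the sketch is genuinely incomplete is exactly the point you flag as the ``main obstacle'': you need the precise transfer statement $\mult_{(\lambda,w)}(p) = \mult_\lambda(\mathrm{in}_w(p))$, where $\mathrm{in}_w(p)\in\HH[X]$ is the initial form along slope $w$, and this is the actual content of Gunn's theorem --- the multiplicity bound is then a one-line corollary. Your claim that ``removing a root of slope $w$ shortens the edge of slope $w$ by one lattice step\,\dots\,and this is uniform over all valid choices of $q$'' is an assertion of exactly that transfer result, not a proof of it; because the witness $q$ in~\eqref{multidef} is non-unique and $\boxdot$ is multivalued, one must show that \emph{every} admissible $q$ has a Newton polygon obtained from that of $p$ by this one-step contraction, and that the induced initial form of $q$ is an admissible quotient of $\mathrm{in}_w(p)$ by $(X \boxplus -\lambda)$ over $\HH$. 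Stringency is used essentially here: it is what makes hyperaddition in $\HH\rtimes\Gamma$ single-valued away from exact cancellation, which is what pins down the Newton polygon of $q$. Without carrying this out, the induction in your ``key steps'' does not close, so the sketch stops short of a proof at precisely the step where the hypothesis of stringency must be invoked.
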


Combining Theorem~\ref{thm:field+hom+rac} and Theorem~\ref{thm:stringent-multi-bound} yields the following.

\begin{corollary}\label{cor:stringent+rac}
Let $f \colon \FF \rightarrow \HH$ be a surjective homomorphism from an algebraically closed field to a stringent hyperfield.
Then $f$ is relatively algebraically closed.
\end{corollary}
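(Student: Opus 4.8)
The statement is an immediate composition of two results already recorded in the excerpt, so the ``plan'' is really just to chain them together and check the hypotheses line up. First I would recall Theorem~\ref{thm:stringent-multi-bound} (Gunn's Corollary E), which asserts that every stringent hyperfield $\HH$ satisfies the multiplicity bound, i.e. $\sum_{a \in \HH} \mult_a(p) \leq \deg(p)$ for every univariate $p \in \HH[X]$. This is exactly the hypothesis that Theorem~\ref{thm:field+hom+rac} requires of the target hyperfield.

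Next I would invoke Theorem~\ref{thm:field+hom+rac}: given a surjective homomorphism $f\colon \FF \to \HH$ with $\FF$ algebraically closed and $\HH$ satisfying the multiplicity bound, $f$ is relatively algebraically closed. By hypothesis our $\HH$ is stringent, so by the previous paragraph it satisfies the multiplicity bound; and $\FF$ is algebraically closed and $f$ is surjective by assumption. Hence all hypotheses of Theorem~\ref{thm:field+hom+rac} are met, and we conclude that $f$ is RAC. That is the entire argument.

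There is no real obstacle here — the corollary is a two-line deduction. The only thing worth a sentence of care is making sure the notion of ``multiplicity bound'' invoked from Gunn's paper is the same as Definition~\ref{def:mult+bound} used in Theorem~\ref{thm:field+hom+rac} (the recursive multiplicity of Baker--Lorscheid, summed over all of $\HH$, bounded by the degree); since both are stated in terms of the Baker--Lorscheid multiplicity this is a matter of citation bookkeeping rather than mathematics. If anything needed elaboration it would be a remark that stringent hyperfields are precisely those classified in Theorem~\ref{thn:stringent+classification} as $\KK\rtimes\Gamma$, $\SS\rtimes\Gamma$ or $\FF\rtimes\Gamma$, so that Corollary~\ref{cor:stringent+rac} in particular applies to ordinary valuations, signed valuations, phased valuations and fine valuations — but this belongs in the surrounding discussion, not the proof.

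\begin{proof}
Since $\HH$ is stringent, it satisfies the multiplicity bound by Theorem~\ref{thm:stringent-multi-bound}. As $\FF$ is algebraically closed and $f$ is surjective, Theorem~\ref{thm:field+hom+rac} applies and shows that $f$ is relatively algebraically closed.
\end{proof}
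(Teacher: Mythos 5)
Your proof is correct and is exactly the paper's argument: the corollary is obtained by combining Theorem~\ref{thm:field+hom+rac} with Theorem~\ref{thm:stringent-multi-bound}, precisely as you do. Nothing is missing.
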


\begin{example}
    This gives an alternative proof that the valuation map $\val \colon \FF \rightarrow \KK \rtimes \Gamma$ on an algebraically closed valued field is RAC.
    However, we can extend this result to fine valuations introduced in Example~\ref{ex:fine+val} to give a new family of RAC maps.
    The field of Hahn series $\hseries{\FF}{\Gamma}$ is algebraically closed when $\FF$ is algebraically closed and $\Gamma$ is a divisible group, hence $\fval \colon \hseries{\FF}{\Gamma} \rightarrow \FF\rtimes \Gamma$ is RAC.
    The same holds for the fine valuation on the field of Puiseux series $\pseries{\FF}$ when $\FF$ is algebraically closed.
\end{example}


\subsection{Tropical extensions of relatively algebraically closed maps}\label{sec:trop+ext+rac}
Given a homomorphism $f\colon \HH_1 \rightarrow \HH_2$, recall its tropical extension $f^\Gamma \colon \HH_1\rtimes \Gamma\rightarrow \HH_2\rtimes \Gamma$ introduced in Definition~\ref{def:hom+extension}.
In this section, we prove that RAC maps are closed under taking tropical extensions.
\begin{proposition}\label{prop:extension+rac}
Let $f\colon \HH_1 \rightarrow \HH_2$ be a relatively algebraically closed homomorphism.
Then for any ordered abelian group $\Gamma$, the induced homomorphism $f^\Gamma\colon \HH_1\rtimes \Gamma \rightarrow \HH_2\rtimes \Gamma$ is relatively algebraically closed.
\end{proposition}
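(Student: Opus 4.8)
The plan is to take an arbitrary polynomial $P \in (\HH_1\rtimes\Gamma)[X]$ and a root $(\beta_0,h_0)$ of its pushforward $f^\Gamma_*(P) \in (\HH_2\rtimes\Gamma)[X]$, and produce a lift. First I would reduce to a combinatorial picture of polynomials over a tropical extension. Writing $P = \bigboxplus_{i=0}^d (c_i,g_i)\odot X^i$, evaluation at $(a,h)$ gives terms $(c_i\odot_{\HH_1} a^i,\ g_i + i h)$. The $\Gamma$-component behaves exactly like the usual tropical/valuation picture: the minimum of $g_i + ih$ over $i$ is attained, and $(a,h)$ can only be a root if that minimum is attained at least twice — this is because in a tropical extension, a sum of elements contains $\0$ only when there is a tie in the $\Gamma$-coordinate (among the minimal terms) and the corresponding $\HH$-sum contains $\0_{\HH}$. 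So the first step is: given the root $(\beta_0,h_0)$ of $f^\Gamma_*(P)$, let $S = \{i : g_i + ih_0 \text{ is minimal}\}$; by the above $|S|\geq 2$, and the minimal $\HH_2$-coordinates must sum to $\0_{\HH_2}$, i.e. $\0_{\HH_2}\in\bigboxplus_{i\in S} f(c_i)\odot_{\HH_2}\beta_0^{i}$.

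The second step is to fix the $\Gamma$-coordinate of the lift to be $h_0$ (this is forced), and reduce the $\HH$-coordinate problem to the RAC property of $f$ itself. Consider the "initial form" polynomial $\mathrm{in}(P) := \bigboxplus_{i\in S} c_i \odot Y^{i-i_{\min}} \in \HH_1[Y]$, where $i_{\min}=\min S$ (dividing through by $Y^{i_{\min}}$ to get an honest polynomial, or work with Laurent polynomials on the torus). Its pushforward under $f$ is $\bigboxplus_{i\in S} f(c_i)\odot Y^{i-i_{\min}}\in\HH_2[Y]$, and $\beta_0$ is a root of it by Step 1 (after dividing by $\beta_0^{i_{\min}}$, using $\beta_0\in\HH_2^\times$). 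Since $f$ is RAC, there exists $\alpha_0\in f^{-1}(\beta_0)\subseteq\HH_1^\times$ with $\0_{\HH_1}\in\bigboxplus_{i\in S}c_i\odot\alpha_0^{i-i_{\min}}$, equivalently $\0_{\HH_1}\in\bigboxplus_{i\in S}c_i\odot\alpha_0^i$.

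The third step is to check that $(\alpha_0,h_0)$ is genuinely a root of $P$ and lies in $(f^\Gamma)^{-1}((\beta_0,h_0))$; the latter is immediate since $f^\Gamma(\alpha_0,h_0)=(f(\alpha_0),h_0)=(\beta_0,h_0)$. For the former, I evaluate $P(\alpha_0,h_0)=\bigboxplus_{i=0}^d(c_i\odot\alpha_0^i,\ g_i+ih_0)$: the terms with $i\notin S$ have strictly larger $\Gamma$-coordinate and are absorbed in the hypersum, while the terms with $i\in S$ all share the minimal $\Gamma$-coordinate $g$ and their $\HH_1$-sum contains $\0_{\HH_1}$ by Step 2; by the definition of hyperaddition in $\HH_1\rtimes\Gamma$ (the fourth case), this forces $\0\in P(\alpha_0,h_0)$. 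I expect the main obstacle to be the bookkeeping in this last step: one must carefully invoke the piecewise definition of $\boxplus$ on $\HH_1\rtimes\Gamma$ — in particular associativity of the hyperoperation and the fact that once $\0_\HH$ appears among the minimal-valuation terms, the whole "tail" of higher-valuation terms (and $\0$ itself) is swept into the hypersum, so nothing that happens for $i\notin S$ can obstruct $\0$ being in the sum. A secondary subtlety is handling $i_{\min}>0$ cleanly (either by passing to Laurent polynomials over the torus, which is harmless since $\alpha_0,\beta_0$ are units, or by factoring out $X^{i_{\min}}$ throughout and noting $(0,h_0)$ with $0\in\HH^\times$ undefined does not arise because a root with a zero coordinate would make $P$ have its constant term as the unique minimal term, contradicting $|S|\geq 2$ unless $c_0$ itself vanishes, which it cannot).
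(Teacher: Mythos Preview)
Your proposal is correct and follows essentially the same approach as the paper: isolate the set $S$ (the paper calls it $J$) of indices achieving the minimal $\Gamma$-value, observe that the $\HH_2$-coordinates over $S$ give a univariate root of the pushed-forward ``initial form'', apply RAC of $f$ to lift $\beta_0$ to $\alpha_0$, and then verify $(\alpha_0,h_0)$ is a root of $P$ via the piecewise definition of $\boxplus$ on the tropical extension. The paper packages the first and last steps into a small lemma (your Step~1 and Step~3 are exactly its content), but the argument is the same.

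One simplification: your detour through $Y^{i-i_{\min}}$ and the attendant worry about $i_{\min}>0$ is unnecessary. The RAC property applies to \emph{any} univariate polynomial in $\HH_1[X]$, so you may apply it directly to $\bigboxplus_{i\in S} c_i \odot X^i$ without shifting exponents; the paper does exactly this. Since $\beta_0\in\HH_2^\times$ automatically (nonzero elements of $\HH\rtimes\Gamma$ have nonzero $\HH$-coordinate by definition), there is nothing to check. The only residual edge case is the root $\0\in\HH_2\rtimes\Gamma$ itself, which is trivial: $\0$ is a root of $f^\Gamma_*(P)$ iff $P$ has no constant term iff $\0$ is a root of $P$.
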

To do so, we first note the following lemma that determines when an element is a root of a polynomial over a tropical extension. 
\begin{lemma}\label{lem:extension+root}
    Let $p \in (\HH\rtimes \Gamma)[X]$ and $a \in \HH\rtimes \Gamma$ with $p(a) = \bigboxplus_{i=0}^n (c_i,g_i)$, where $(c_i,g_i)$ is the evaluation of the monomial of degree $i$ at $a$.
    Then $a$ is a root of $p$ if and only if there exists $J \subseteq \{0,1, \dots, n\}$ such that
\begin{align*}
    g_j < g_i \, , \, g_j = g_{j'} \quad \forall j, j' \in J, i \notin J \, , \qquad \text{ and } \qquad
    \0_{\HH} \in \bigboxplus_{j \in J}{}_\HH \, c_j \, .
\end{align*}
\end{lemma}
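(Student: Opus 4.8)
The strategy is to unwind the definition of the hyperaddition on $\HH\rtimes\Gamma$ when it is iterated over the $n+1$ monomial evaluations $(c_0,g_0),\dots,(c_n,g_n)$. First I would recall that $a$ being a root means $\0 \in p(a) = \bigboxplus_{i=0}^n (c_i,g_i)$, so everything reduces to understanding the structure of an iterated hypersum in a tropical extension. The key observation is that the $\Gamma$-coordinate behaves like a (min-)valuation: in any sum $(c,g)\boxplus(c',g')$, when $g\neq g'$ the term with larger $\Gamma$-coordinate is simply discarded, and terms at strictly larger levels than the minimum never contribute to — and in particular never force — the appearance of $\0$. So if we let $\gamma = \min_i g_i$ and $J_0 = \{i : g_i = \gamma\}$ be the set of indices achieving the minimal level, then $\0 \in \bigboxplus_i (c_i,g_i)$ forces us to look only at $\bigboxplus_{j\in J_0} (c_j,g_j)$ together with the ``overflow'' set that appears at higher levels.

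The core case analysis is then: the sum $\bigboxplus_{j\in J_0}(c_j, \gamma)$ over the minimal-level terms, computed in $\HH\rtimes\Gamma$, equals $\{(c,\gamma) : c \in \bigboxplus^\HH_{j\in J_0} c_j\}$ if $\0_\HH \notin \bigboxplus^\HH_{j\in J_0} c_j$, and otherwise equals $\{(c,\gamma) : c \in (\bigboxplus^\HH_{j\in J_0} c_j)\setminus \0_\HH\} \cup \{(b,h) : h > \gamma\} \cup \{\0\}$. This is exactly the two-case structure in the definition of addition, and I would verify it by induction on $|J_0|$ using associativity of $\boxplus$ (a hypergroup axiom) to regroup the minimal-level terms together. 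Adding in the strictly-higher-level terms $(c_i,g_i)$ with $g_i > \gamma$ changes nothing in the first case (they are absorbed/dominated) and changes nothing essential in the second case (they land inside the already-present overflow set). Hence $\0 \in p(a)$ iff $\0_\HH \in \bigboxplus^\HH_{j\in J_0} c_j$, i.e. iff $J = J_0$ witnesses the claimed condition. Conversely, if some $J$ satisfies $g_j = g_{j'} < g_i$ for $j,j'\in J$, $i\notin J$ and $\0_\HH \in \bigboxplus^\HH_{j\in J} c_j$, then necessarily $J$ is precisely the set of minimal-level indices (the strict inequality forces this), so the same computation gives $\0 \in p(a)$.

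The main obstacle I anticipate is the careful bookkeeping in the iterated hypersum: hyperaddition is only defined as a map to subsets, and while it is associative and commutative as an operation on subsets, one must be careful that ``$\0$ appears in the total sum'' genuinely propagates from ``$\0_\HH$ appears in the minimal-level $\HH$-sum'' and is not accidentally created or destroyed by the higher-level terms or by the order of summation. Making the induction on $|J_0|$ precise — in particular handling the step where adding one more minimal-level term can flip us from the first case of the definition to the second (when its $\HH$-contribution introduces $\0_\HH$) — is where the real work lies, but it is a finite, mechanical check once the right inductive statement (describing $\bigboxplus$ of any subfamily as one of the two explicit forms above) is set up. Everything else follows from the hypergroup axioms and the explicit formula for $\boxplus$ on $\HH\rtimes\Gamma$.
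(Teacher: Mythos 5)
Your plan is correct and matches the paper's approach: the published proof is a two-sentence argument (take $J$ to be the set of indices of minimal level, note dominated terms do not contribute, and observe that $\0$ lies in the sum iff $\0_\HH$ lies in the $\HH$-level sum over $J$), and your sketch simply fleshes out that same argument with the explicit induction on $|J_0|$ and the two-case description of the iterated hypersum. One small point worth keeping in mind when you write it out: the condition in the lemma already forces $J$ to be exactly the set of minimal-level indices (the strict inequality $g_j < g_i$ for $i \notin J$ rules out omitting any tied minimizer), so the ``existence of $J$'' is really a unique choice, which is precisely what your converse direction observes.
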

\begin{proof}
    Let $J$ be the index set of monomials with $g_j$ minimal: any other monomials are dominated and so do not contribute to the summation.
    The sum of these monomials contains $\0$ if and only if their sum in $\HH$ contains $\0_\HH$.
\end{proof}

\begin{proof}[Proof of Proposition~\ref{prop:extension+rac}]
    We will liberally use the conditions of Lemma~\ref{lem:extension+root} throughout.
    Consider the polynomials
    \[
    p = \bigboxplus_{i=0}^n (\lambda_i,g_i) \odot X^{i} \in (\HH_1\rtimes \Gamma)[X] \quad , \quad f^\Gamma_*(p) = \bigboxplus_{i=0}^n (f(\lambda_i),g_i) \odot X^{i} \in (\HH_2\rtimes \Gamma)[X] \, .
    \]
    Let $(b,h) \in V(f^\Gamma_*(p))$ be a root of $f^\Gamma_*(p)$, and let $J \subseteq \{0, 1, \dots, n\}$ be the indices of the monomials at which the minimum is attained in $\Gamma$, i.e. 
    \begin{align*}
    g_j \odot h^{j} = g_j + j \cdot h < g_i + i \cdot h = g_i \odot h^{i} \quad , \quad g_j \odot h^{j} = g_{j'} \odot h^{j'} \quad , \quad \forall j, j' \in J \, , \, i \notin J \, .
    \end{align*}  
    Then $b$ must be a non-zero root of the polynomial $\bigboxplus_{j \in J} f(\lambda_j) \odot X^{j} \in \HH_2[X]$.
    As $f$ is RAC, there exists a non-zero root $a \in f^{-1}(b)$ of the polynomial $\bigboxplus_{j \in J} \lambda_j \odot X^{j} \in \HH_1[X]$.
    As $(a,h) \in \HH_1\rtimes \Gamma$ satisfies the conditions of Lemma~\ref{lem:extension+root}, it is a root of $p$ in the preimage of $(b,h)$.
\end{proof}

Recall from Example~\ref{ex:alg-clsd-kras-RAC} that a hyperfield is algebraically closed if the trivial homomorphism $\omega\colon \HH \rightarrow \KK$ is relatively algebraically closed.
Applying Proposition~\ref{prop:extension+rac} gives us the following corollary.
\begin{corollary}\label{cor:alg+closed+rac}
Let $\HH$ be an algebraically closed hyperfield.
The homomorphism $\omega^\Gamma \colon \HH\rtimes \Gamma \rightarrow \KK\rtimes \Gamma$ is relatively algebraically closed.
\end{corollary}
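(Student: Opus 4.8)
The statement is an immediate consequence of two facts already established in the excerpt: Example~\ref{ex:alg-clsd-kras-RAC}, which characterizes algebraically closed hyperfields as exactly those $\HH$ for which the trivial homomorphism $\omega\colon\HH\to\KK$ is relatively algebraically closed; and Proposition~\ref{prop:extension+rac}, which asserts that the tropical extension of a RAC homomorphism is again RAC. The plan is therefore very short.

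First I would recall that $\HH$ algebraically closed means, by Example~\ref{ex:alg-clsd-kras-RAC}, that $\omega\colon\HH\to\KK$ is a relatively algebraically closed homomorphism (and in particular it is surjective, since $\HH^\times\neq\varnothing$ so $\1_\HH$ hits $\1_\KK$ and $\0_\HH$ hits $\0_\KK$). Next, I would apply Proposition~\ref{prop:extension+rac} with $f=\omega$, $\HH_1=\HH$, $\HH_2=\KK$, and the given ordered abelian group $\Gamma$: this yields that the induced homomorphism $\omega^\Gamma\colon\HH\rtimes\Gamma\to\KK\rtimes\Gamma$ is relatively algebraically closed. Since $\omega^\Gamma$ is precisely the map $(c,g)\mapsto(\omega(c),g)=(\1_\KK,g)$ described in Definition~\ref{def:hom+extension} (and $\0\mapsto\0$), this is exactly the homomorphism named in the corollary, and we are done.

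There is essentially no obstacle here — the corollary is packaged as a direct specialization. The only minor point worth a sentence is to confirm that $\omega^\Gamma$ really is a homomorphism, but this is covered by the remark in Definition~\ref{def:hom+extension} that tropical extensions of homomorphisms are homomorphisms, together with the fact that $\omega$ is a homomorphism (every hyperfield admits the trivial homomorphism to $\KK$). So the write-up is: invoke Example~\ref{ex:alg-clsd-kras-RAC} to get that $\omega$ is RAC, then invoke Proposition~\ref{prop:extension+rac} to push this through the $\Gamma$-extension, and observe the resulting map is the claimed $\omega^\Gamma$.
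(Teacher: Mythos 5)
Your proposal is correct and matches the paper's argument exactly: the paper derives the corollary by combining Example~\ref{ex:alg-clsd-kras-RAC} (algebraically closed $\iff$ $\omega\colon\HH\to\KK$ is RAC) with Proposition~\ref{prop:extension+rac}. Nothing is missing.
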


\begin{example}
    Recall the map $\eta(z) = -\log(|z|)$ from $\TT\CC$ to $\TT$ introduced in Example~\ref{ex:trop+complex+map}.
    It was shown in~\cite{Maxwell:21} that this map is RAC, and was the only `non-trivial' example known.
    Proposition~\ref{prop:extension+rac} gives us a new perspective on this map via tropical extensions.
    Recall that we can view the map $\eta$ as the tropical extension
    \begin{align*}
    \omega^\Gamma \colon \Phi\rtimes \RR &\rightarrow \KK\rtimes \RR \\
    (\theta, g) &\mapsto (\1,g) \, .
    \end{align*}
    Example~\ref{Ex:trop-ph-alg-clsd} shows that the tropical phase hyperfield is algebraically closed, hence Corollary~\ref{cor:alg+closed+rac} shows $\eta$ must also be RAC.
\end{example}

\subsection{Quotients of relatively algebraically closed maps}
In the following, we show that RAC maps are closed under `compatible' quotients of the domain and target hyperfields.

\begin{proposition}\label{prop:RAC-factor-RAC}
    Let $f:\HH_1 \rightarrow \HH_2$ be a relatively algebraically closed homomorphism.
    If $U_1 \subseteq \HH_1^{\times}$ and $U_2 \subseteq \HH_2^{\times}$ such that $f(U_1) = U_2$, then the map 
    \begin{align*}
    \hat{f}: \HH_1/U_1 & \rightarrow \HH_2/U_2 \\
    \overbar{a} & \mapsto \overbar{f(a)} \
    \end{align*}
    is relatively algebraically closed.
\end{proposition}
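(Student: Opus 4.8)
The plan is to reduce the statement to the relative algebraic closedness of $f$ itself, exploiting the commutative square $\hat f\circ\tau_1 = \tau_2\circ f$, where $\tau_i\colon\HH_i\to\HH_i/U_i$ are the quotient maps. First I would dispatch the routine points. The map $\hat f$ is well defined because $\overbar a=\overbar b$ forces $a=b\odot u$ with $u\in U_1$, hence $f(a)=f(b)\odot f(u)$ with $f(u)\in f(U_1)=U_2$; it is surjective because $f$ is (RAC maps are surjective by Definition~\ref{def:RAC}); and it is a homomorphism because for $\overbar c\in\overbar a\boxplus\overbar b$, realised by $c\in au\boxplus_{\HH_1}bu'$ with $u,u'\in U_1$, one gets $f(c)\in f(a)f(u)\boxplus_{\HH_2}f(b)f(u')$ and therefore $\overbar{f(c)}\in\hat f(\overbar a)\boxplus\hat f(\overbar b)$. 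So the real content is the lifting of roots of univariate polynomials.

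For the RAC property I would take $\hat p=\bigboxplus_i\overbar{c_i}\odot X^i\in(\HH_1/U_1)[X]$ with representatives $c_i\in\HH_1^\times$, and a root $\overbar\beta\in V(\hat f_*(\hat p))$. The case $\overbar\beta=\0$ is immediate: then $\hat f_*(\hat p)$, and hence $\hat p$, has no constant term, so $\0$ roots $\hat p$ and $\hat f(\0)=\0$. Assuming $\overbar\beta\neq\0$ with representative $\beta\in\HH_2^\times$, I would lift $\hat p$ to $p=\bigboxplus_i c_i\odot X^i\in\HH_1[X]$, so that $(\tau_1)_*(p)=\hat p$ and $\hat f_*(\hat p)=\bigboxplus_i\overbar{f(c_i)}\odot X^i$. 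The chain of steps is then: (i) unwind what it means for $\overbar\beta$ to be a root over the quotient $\HH_2/U_2$, obtaining units $w_i\in U_2$ with $\0_{\HH_2}\in\bigboxplus_i f(c_i)\odot\beta^i\odot w_i$, i.e. $\beta$ is a (non-zero) root of $q:=\bigboxplus_i\big(f(c_i)\odot w_i\big)\odot X^i\in\HH_2[X]$; (ii) use the hypothesis $f(U_1)=U_2$ to choose $u_i\in U_1$ with $f(u_i)=w_i$, so that $q=f_*(\tilde p)$ where $\tilde p:=\bigboxplus_i(c_i\odot u_i)\odot X^i\in\HH_1[X]$; (iii) apply the RAC property of $f$ to the root $\beta$ of $f_*(\tilde p)$, producing $\alpha\in f^{-1}(\beta)$ with $\alpha\in V(\tilde p)$, where $\alpha\neq\0$ since $\0$ is the unique preimage of $\0$; (iv) push $\alpha$ forward by $\tau_1$: Lemma~\ref{pushforwardgeneral} gives $\overbar\alpha\in V((\tau_1)_*(\tilde p))$, and $(\tau_1)_*(\tilde p)=\bigboxplus_i\overbar{c_i\odot u_i}\odot X^i=\bigboxplus_i\overbar{c_i}\odot X^i=\hat p$ because $u_i\in U_1$, so $\overbar\alpha\in V(\hat p)$; finally $\hat f(\overbar\alpha)=\overbar{f(\alpha)}=\overbar\beta$, giving the required lift.

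The step I expect to be the genuine obstacle is (i): making precise iterated hyperaddition in a quotient hyperfield. The fact needed is that $\0_{\HH/U}\in\bigboxplus_i\overbar{x_i}$ if and only if there exist $u_i\in U$ with $\0_\HH\in\bigboxplus_i x_i\odot u_i$, i.e. the coset of an iterated hypersum is obtained by twisting each summand independently by a unit of $U$. I would prove this by induction on the number of summands from the binary hyperoperation on $\HH/U$ of Definition~\ref{def:factor} together with distributivity of $\odot$ over $\boxplus$; it is mechanical once set up, but it is the one place requiring care. I would also flag that the hypothesis $f(U_1)=U_2$ is used exactly at step (ii), and only surjectivity of $f|_{U_1}$ onto $U_2$ (not injectivity) is needed there; and that everything restricts verbatim to Laurent polynomials if one wants the torus version.
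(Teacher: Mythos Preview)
Your proof is correct and follows essentially the same route as the paper: set up the commutative square $\hat f\circ\tau_1=\tau_2\circ f$, lift both the polynomial and the root from the quotients back to $\HH_1,\HH_2$ using $f(U_1)=U_2$, apply the RAC property of $f$, and push the resulting root forward along $\tau_1$ via Lemma~\ref{pushforwardgeneral}. The one notable difference is that where you isolate and prove directly the lemma on iterated hyperaddition in a quotient (your step~(i), that $\0_{\HH/U}\in\bigboxplus_i\overbar{x_i}$ iff $\0_\HH\in\bigboxplus_i x_iu_i$ for some $u_i\in U$), the paper instead invokes an external result, namely the decomposition $V(P)=\bigcup_{\sigma_*(p)=P}\sigma(V(p))$ from~\cite[Corollary~6.3.4]{Maxwell:Thesis}; these are equivalent formulations of the same fact, and your inductive argument for it is sound.
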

\begin{proof}
Let $\tau:\HH_1 \rightarrow \HH_1/U_1$ and $\sigma: \HH_2 \rightarrow \HH_2/U_2$ denote the corresponding quotient maps.
Observe that $\hat{f}$ is defined such that the following diagram commutes,
\[
\begin{tikzcd}
\HH_1 \arrow{r}{f} \arrow{d}{\tau} & \HH_2 \arrow{d}{\sigma} \\
\HH_1/U_1 \arrow{r}{\hat{f}} & \HH_2/U_2
\end{tikzcd}
\]
i.e. we have $\hat{f}(\tau(a)) = \sigma(f(a))$ for all $a \in \HH_1$. 
    We show that $\hat{f}$ is well-defined similarly to the proof of Lemma~\ref{lem:factor+from+hom}.
    Let $a,b \in \HH_1$ such that $\tau(a) = \tau(b)$, i.e. there exists $u_1 \in U_1$ such that $a = b \odot_1 u_1$. Then, 
    \begin{align*}
    \hat{f}(\tau(a)) &= \sigma(f(a)) = \sigma(f(b \odot_1 u_1)) = \sigma(f(b) \odot_2 f(u_1)) \\
    &= \sigma(f(b)) \odot_2 \sigma(f(u_1)) = \sigma(f(b)) \odot_2 \1_2 = \sigma(f(b)) = \hat{f}(\tau(b)),
    \end{align*}
    as $f(u_1) \in U_2$.
    Thus, $\hat{f}$ is independent of the choice of representative for the coset.
   Moreover, it is straightforward to verify that $\hat{f}$ is also a surjective homomorphism.

    To show it is RAC, let $Q \in (\HH_1/U_1)[X]$ and $P = \hat{f}_*(Q) \in (\HH_2/U_2)[X]$.
    We show that for some arbitrary root $z \in V(P)$, there exists $x \in V(Q)$ such that $\hat{f}(x) = z$.
    By \cite[Corollary 6.3.4]{Maxwell:Thesis}, we can decompose the root set $V(P)$ as follows
    \[
    V(P) = \bigcup_{\sigma_*(p) = P} \sigma(V(p)) \, , \quad p \in \HH_2[X] \, . 
    \]
    This implies that there exists a polynomial $p \in \sigma_*^{-1}(P)$ with root $w \in V(p) \subseteq \HH_2$ such that $\sigma(w) = z$.
    We now claim there exists a polynomial $q \in \tau_*^{-1}(Q) \subseteq \HH_1[X]$ such that $f_*(q) = p$.
    To justify this, note that
    \[
    \sigma_*(p) = P = \hat{f}_*(Q) = \hat{f}_*(\tau_*(q')) = \sigma_*(f_*(q')) \, ,
    \]
    for all $q' \in \tau_*^{-1}(Q)$.
    Fixing some $q'$, this implies that each coefficient of $p$ differs from the corresponding coefficient of $f_*(q')$ by a scalar belonging to $U_2$.
    As $f(U_1) = U_2$, there exists $q \in \HH_1[X]$ such that $f_*(q) = p$ and has coefficients that differ from $q'$ by scalars in $U_1$.
    Hence $\tau_*(q') = \tau_*(q)$, and so $q$ satisfies the claim.
    
    As $f$ is RAC, for $w \in V(p)$ there exists $y \in f^{-1}(w)$ such that $y \in V(q)$.
    Applying Lemma~\ref{pushforwardgeneral}, we have
    \[
    \tau(y) \in \tau(V(q))  \subseteq V(\tau_*(q)) = V(Q) \, .
    \]
    Setting $x = \tau(y)$, we see that
    \[
    \hat{f}(x) = \hat{f}(\tau(y)) = \sigma(f(y)) = \sigma(w) = z \, ,
    \]
    which concludes the proof.
\end{proof}

\begin{example}
    As the complex numbers are algebraically closed, the trivial homomorphism $\omega\colon\CC \rightarrow \KK$ is RAC.
    By applying Proposition~\ref{prop:RAC-factor-RAC} when $U_1 = \RR_{>0}$ and $U_2 = \1_\KK$, we get that $\PP \cong \CC/\RR_{>0} \rightarrow \KK$ is also a RAC map and an alternative proof that $\PP$ is algebraically closed.
    
The compatible quotient condition, i.e. $f(U_1) = U_2$, cannot be relaxed and still preserve the RAC property, as the following example demonstrates.
The identity map $\rm{id} \colon \CC \rightarrow \CC$ is trivially RAC.
    However, if we try to quotient by $U_1 = \1_\CC$ and $U_2 = \RR_{>0}$, we get the phase map $\ph: \CC \rightarrow \PP$, which is not RAC as shown in Example~\ref{Ex:not-RAC}.
\end{example}

\begin{example} \label{ex:hyperfield+valuations}
    A number of authors~\cite{Kra:56,KLS,Linzi:23} have investigated valuations on hyperfields.
    Similar to the discussion in Section~\ref{subsec:homo+val}, these can also be viewed as homomorphisms $\HH \rightarrow \KK\rtimes \Gamma$ where $\Gamma$ is the value group.
    If $\HH = \FF/U$ is a factor hyperfield where $(\FF,\val)$ a valued field and $U \subseteq \val^{-1}(0)$ then it is straightforward to verify induced map $\val_U$ on $\HH$ is also a valuation.
    Moreover, as $\val$ is RAC, applying Proposition~\ref{prop:RAC-factor-RAC} in the case $U_1 = U$ and $U_2 = \1_2$ implies that $\val_U$ must also be RAC.
\end{example}


\section{Tropical geometry for hyperfields}\label{sec:main+thms}
In this section, we prove our main theorems: hyperfield generalisations of Kapranov's theorem and the Fundamental theorem of tropical geometry.

\subsection{Kapranov's theorem} \label{sec:Kapranov}
In this section we prove a generalisation of Kapranov's theorem for RAC homomorphisms between hyperfields.
We first prove it for affine hypersurfaces, and show the projective and torus cases as corollaries.

\begin{theorem}\label{thm:Kapranov}
Let $f\colon \HH_1 \rightarrow \HH_2$ be a relatively algebraically closed homomorphism and $p \in \HH_1[X_1, \dots, X_n]$ a polynomial.
Then
\[
V(f_*(p)) = f(V(p)) \, .
\]
\end{theorem}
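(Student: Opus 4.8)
Since Lemma~\ref{pushforwardgeneral} already gives the inclusion $f(V(p)) \subseteq V(f_*(p))$, the plan is to prove the reverse inclusion $V(f_*(p)) \subseteq f(V(p))$ by induction on the number of variables $n$. The base case $n = 1$ is exactly the RAC hypothesis (Definition~\ref{def:RAC}): if $\beta \in V(f_*(p))$ for a univariate $p$, then some $\alpha \in f^{-1}(\beta)$ lies in $V(p)$, so $\beta \in f(V(p))$.

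For the inductive step, suppose the result holds for polynomials in $n-1$ variables over any RAC homomorphism, and let $p \in \HH_1[X_1, \dots, X_n]$ with $\bm\beta = (\beta_1, \dots, \beta_n) \in V(f_*(p))$. The idea is to substitute the known coordinates one at a time. First I would like to fix $X_n = \beta_n$; the subtlety is that $\beta_n$ may not literally be in the image-compatible form needed, so instead I pick a preimage. Concretely, choose $\alpha_n \in f^{-1}(\beta_n)$ (using surjectivity of $f$) and form the specialized polynomial $q(X_1, \dots, X_{n-1}) = p(X_1, \dots, X_{n-1}, \alpha_n) \in \HH_1[X_1, \dots, X_{n-1}]$ — but here one must be careful, because evaluating a hyperpolynomial at a point gives a \emph{set}, not an element, so "substituting" does not straightforwardly produce a polynomial. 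The cleaner approach is to substitute only in the symbolic coefficient sense: treat $p$ as $\bigboxplus_{d_n} p_{d_n}(X_1,\dots,X_{n-1}) \odot X_n^{d_n}$ and note that this reorganization is an honest equality of hyperpolynomials. The genuine obstacle, and the step I expect to be hardest, is handling the fact that hyperaddition is multivalued: the naive "plug in $\beta_n$, get a polynomial in fewer variables whose pushforward has root $(\beta_1,\dots,\beta_{n-1})$" argument can fail because $\0 \in f_*(p)(\bm\beta)$ only tells us the total hypersum contains $\0$, and splitting the sum by the last variable's powers requires knowing which monomials are "active."

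Because of this, I would instead mimic the standard field-theoretic proof of Kapranov via a generic line / curve selection, adapted to hyperfields: the approach in the literature (e.g.\ for valued fields) substitutes $X_i \mapsto $ suitable monomials or uses a parametrized family reducing to the univariate case. In the hyperfield setting the right move is likely: pick preimages $\alpha_2, \dots, \alpha_n$ of $\beta_2, \dots, \beta_n$ under $f$, and consider the univariate polynomial $p(X_1, \alpha_2, \dots, \alpha_n)$ — checking that, thanks to $f$ being a homomorphism, $f_*$ of (a selected single-valued branch of) this univariate polynomial has $\beta_1$ among its roots, because $\0 \in f_*(p)(\bm\beta)$ can be witnessed by a choice of elements in each hypersum, and these choices lift. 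Then RAC in one variable produces $\alpha_1 \in f^{-1}(\beta_1)$ with $\0 \in p(\alpha_1, \alpha_2, \dots, \alpha_n)$, giving $\bm\alpha \in V(p)$ with $f(\bm\alpha) = \bm\beta$. The delicate bookkeeping is ensuring the "witnessing choice" of hypersum elements is consistent across the homomorphism, i.e.\ that from $\0_2 \in \bigboxplus_2 f(c_{\bm d}) \odot \bm\beta^{\bm d}$ one can organize the computation so a single univariate specialization suffices; I expect this to be the technical heart, and it is plausible the authors handle it by first reducing to the torus/Laurent case or by an auxiliary lemma on specializations.

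Once the affine case $V(f_*(p)) = f(V(p))$ is established, the projective and torus versions follow formally: for the projective hypersurface one uses $PV(\overbar{p}) = \faktor{V(\overbar{p}) \setminus \{\zerovec\}}{\sim}$ together with the fact that $f$ induces a well-defined map on projective spaces commuting with homogenization ($\overbar{f_*(p)} = f_*(\overbar{p})$), and for the torus version one applies Lemma~\ref{lem:torus+as+proj} and the affine case to the affinization $p_{\aff}$, noting $f(V^\times(p)) = f(V(p_{\aff}) \cap (\HH_1^\times)^n) = V(f_*(p_{\aff})) \cap (\HH_2^\times)^n = V^\times(f_*(p))$ since $f$ sends $\HH_1^\times$ onto $\HH_2^\times$ by surjectivity.
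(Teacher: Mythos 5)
You correctly diagnose the central obstacle (substituting $\alpha_2, \dots, \alpha_n$ for $X_2, \dots, X_n$ produces set-valued coefficients) and even mention the right remedy in passing — a monomial substitution reducing to the univariate case — but your concrete sketch then reverts to the coordinate substitution, and the crux step ``$\0 \in f_*(p)(\bm\beta)$ can be witnessed by a choice of elements in each hypersum, and these choices lift'' does not go through. Grouping $p = \bigboxplus_k p_k(X_2,\dots,X_n)\odot X_1^k$, one can indeed extract witnesses $\delta_k \in f_*(p_k)(\beta_2,\dots,\beta_n)$ with $\0_2 \in \bigboxplus_k \delta_k \odot \beta_1^k$, but to form a univariate polynomial over $\HH_1$ you need $\delta_k \in f(p_k(\alpha_2,\dots,\alpha_n))$, which is only a \emph{subset} of $f_*(p_k)(\beta_2,\dots,\beta_n)$ (this is Lemma~\ref{pushforwardgeneral} applied to $p_k$, and the containment can be strict). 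Upgrading that containment to an equality is essentially the multivariate statement you are trying to prove, so the ``choices lift'' claim is circular; the univariate RAC hypothesis gives no control over these $(n-1)$-variable specializations, and no choice of preimages $\alpha_i$ repairs it.

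The paper avoids the issue entirely by pulling $p$ back along a \emph{generic monomial curve} through preimages of the coordinates of $\bb$. Fix $\lambda_i \in f^{-1}(b_i)$ and set $\phi_D(x) = (\lambda_1\odot x^{d_1},\dots,\lambda_n\odot x^{d_n})$; choosing the exponent vector $D$ so that $D\cdot I \neq D\cdot I'$ for distinct $I, I'$ in the support of $p$ guarantees that $\phi_D^*(p) = \bigboxplus_I c_I\odot\blambda^I\odot X^{D\cdot I}$ has singleton coefficients, i.e.\ it is an honest element of $\HH_1[X]$, so no branch selection is ever needed. Since $\psi_D(\1_2) = \bb$ for the analogous curve $\psi_D$ over $\HH_2$, and $f_*(p)\circ\psi_D = f_*(\phi_D^*(p))$, the point $\1_2$ is a root of $f_*(\phi_D^*(p))$; one application of univariate RAC gives $\tilde a \in f^{-1}(\1_2)$ with $\tilde a \in V(\phi_D^*(p))$, and $\ba := \phi_D(\tilde a)$ is the sought preimage root. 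The move you are missing is that genericity of $D$ \emph{prevents} multivaluedness in the pullback rather than confronting it after the fact. Your derivations of the projective and torus cases from the affine case are correct and match Corollaries~\ref{cor:proj+kapranov} and~\ref{cor:torus+kapranov}.
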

The proof of this statement is very similar to the case where $\HH_2 = \TT$ given in~\cite{Maxwell:21}, as we do not require $\HH_2$ to have any additional properties.
However, we include a proof for completeness.
\begin{proof}
Let $p = \bigboxplus_{I \in \ZZ_{\geq 0}^n} c_I \odot \bX^I \in \HH[X_1, \dots, X_n]$ and pick some root $\bb \in V(f_*(p))$ of the push-forward.
Given Lemma~\ref{pushforwardgeneral}, it suffices to show there exists $\ba \in V(p)$ such that $f(\ba) = \bb$.

Fix some $\lambda_i \in f^{-1}(b_i)$: note that such values exist as $f$ is surjective.
For any $D = (d_1, \dots, d_n) \in \ZZ_{\geq 0}^n$, we define the maps
\begin{align*}
    \phi_D\colon \HH_1 &\rightarrow \HH_1^n & \psi_D\colon \HH_2 &\rightarrow \HH_2^n \\
    x &\mapsto (\lambda_1 \odot_1 x^{d_1}, \dots, \lambda_n \odot_1 x^{d_n}) & y &\mapsto (f(\lambda_1) \odot_2 y^{d_1}, \dots, f(\lambda_n) \odot_2 y^{d_n}) \, .
\end{align*}
It is easy to verify by hyperfield homomorphism properties that the following diagram commutes.
\[
\begin{tikzcd}
\HH_1 \arrow{r}{f} \arrow{d}{\phi_D} & \HH_2 \arrow{d}{\psi_D} \\
\HH_1^n \arrow{r}{f} & \HH_2^n
\end{tikzcd}
\]
Rather than pulling $\bb$ back through the bottom row to find a root, we will instead traverse the other way around the square.

First note that $\psi_D(\1_2) = \bb$.
Consider $\phi_D^*(p) = p \circ \phi_D$, the pullback of the polynomial $p$ through $\phi_D$, i.e.
\[
\phi_D^*(p) = \bigboxplus_{I \in \ZZ_{\geq 0}^n} c_I \odot_1 (\lambda_1 \odot_1 X^{d_1})^{i_1} \odot_1 \cdots \odot_1 (\lambda_n \odot_1 X^{d_n})^{i_n} = \bigboxplus_{I \in \ZZ_{\geq 0}^n} c_I \odot_1 \blambda^I \odot_1 X^{D\cdot I} \, .
\]
Note that $\phi_D^*(p)$ may not be a polynomial in $\HH_1[X]$, as we require coefficients to be elements rather than sets.
There may exist two support vectors $I,I'$ of $p$ such that $D\cdot I = D \cdot I'$, hence the hypersum of their coefficients may give a set.
However, as $p$ has finite support we can choose $D$ sufficiently generically such that $D\cdot I \neq D \cdot I'$ for all $I, I'$ in the support of $p$.
This ensures $\phi_D^*(p) \in \HH_1[X]$, as well as ensuring we cannot get any cancellation of terms.

By expanding out and using properties of homomorphisms, we see that 
\[
f_*(p)(\psi_D(X)) = f_*(\phi_D^*(p))(X) \quad \Rightarrow \quad \0_2 \in f_*(p)(\bb) = f_*(\phi_D^*(p))(\1_2) \, .
\]
As $f$ is a RAC homomorphism, there exists an element $\tilde{a} \in \HH_1$ such that $\0_1 \in \phi_D^*(p)(\tilde{a})$ and $f(\tilde{a}) = \1_2$.
Define $\ba = \phi_D(\tilde{a})$, we then see that:
\begin{align*}
    p(\ba) &= \phi_D^*(p)(\tilde{a}) \ni \0_1 \, , \\
    f(\ba) &= (f(\lambda_1)\odot_2 f(\tilde{a})^{d_1}, \dots, f(\lambda_n)\odot_2 f(\tilde{a})^{d_n}) = \bb \, .
\end{align*}
\end{proof}

We can deduce a projective version of Kapranov's theorem that follows immediately from Theorem~\ref{thm:Kapranov} and \eqref{eq:proj+hyp}.
\begin{corollary} \label{cor:proj+kapranov}
    Let $f\colon \HH_1 \rightarrow \HH_2$ be a relatively algebraically closed homomorphism and $p \in \HH_1[X_0,X_1, \dots, X_n]$ a homogeneous polynomial.
    Then
    \[
    PV(f_*(p)) = f(PV(p)) \, .
    \]
\end{corollary}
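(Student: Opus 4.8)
The plan is to reduce the projective statement to the affine Kapranov theorem (Theorem~\ref{thm:Kapranov}) that we have already proved, by passing to the quotient maps $\pi_i\colon \HH_i^{n+1}\setminus\{\zerovec\}\to\PP^n(\HH_i)$. First I would note that $f_*(p)$ is again homogeneous, since pushing forward along $f$ leaves every exponent unchanged and hence preserves the degree of each monomial; so both $PV(p)$ and $PV(f_*(p))$ are defined, and by the discussion of projective (pre)varieties in Section~\ref{sec:varieties} we have $PV(p)=\pi_1\bigl(V(p)\setminus\{\zerovec\}\bigr)$ and $PV(f_*(p))=\pi_2\bigl(V(f_*(p))\setminus\{\zerovec\}\bigr)$.

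Second, I would record how $f$ interacts with $\zerovec$ and with scaling. Because $\0_1$ is the unique preimage of $\0_2$ and $f$ is applied coordinatewise, a point $\ba\in\HH_1^{n+1}$ has $f(\ba)=\zerovec$ if and only if $\ba=\zerovec$ (indeed $f(\ba)$ has exactly the same set of zero coordinates as $\ba$, as $f(\HH_1^\times)\subseteq\HH_2^\times$). Hence $f\bigl(V(p)\setminus\{\zerovec\}\bigr)=f(V(p))\setminus\{\zerovec\}$, which equals $V(f_*(p))\setminus\{\zerovec\}$ by Theorem~\ref{thm:Kapranov}. Moreover $f(\lambda\odot_1\ba)=f(\lambda)\odot_2 f(\ba)$ with $f(\lambda)\in\HH_2^\times$ for $\lambda\in\HH_1^\times$, so $f$ descends to the coordinatewise map $\bar f\colon\PP^n(\HH_1)\to\PP^n(\HH_2)$ making the square with the $\pi_i$ commute — this is precisely the observation made after the definition of projective space in Section~\ref{sec:varieties}.

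Finally I would chase the diagram:
\[
\bar f\bigl(PV(p)\bigr)=\bar f\bigl(\pi_1(V(p)\setminus\{\zerovec\})\bigr)=\pi_2\bigl(f(V(p)\setminus\{\zerovec\})\bigr)=\pi_2\bigl(V(f_*(p))\setminus\{\zerovec\}\bigr)=PV(f_*(p)),
\]
where the second equality uses commutativity of the square and the third uses the computation of the previous paragraph. Since $\pi_2$ is surjective onto $PV(f_*(p))$ (every element of $PV(f_*(p))$ lifts to a nonzero root of $f_*(p)$), both inclusions follow at once. The only step needing any care is the bookkeeping around removing $\zerovec$ and checking the descent is well defined, and I do not expect a genuine obstacle here: all of the substantive content already resides in the affine Kapranov theorem, with the projective case being a formal consequence via the homogeneity of $f_*(p)$ and the compatibility of $f$ with the scaling action.
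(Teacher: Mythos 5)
Your proof is correct and takes essentially the same route as the paper: both arguments reduce the projective statement to the affine Kapranov theorem (Theorem~\ref{thm:Kapranov}), using that $f$ carries $\zerovec$ only to $\zerovec$ and intertwines the scaling actions. The paper works with a chosen affine representative $\bb$ and chases scalars through preimages, while you phrase the same content as a quotient-map diagram chase $f(V(p)\setminus\{\zerovec\})=V(f_*(p))\setminus\{\zerovec\}$ and then apply $\pi_2$; these are the same argument in different notation, with yours absorbing both inclusions into a single chain of equalities rather than treating $f(PV(p))\subseteq PV(f_*(p))$ separately via Lemma~\ref{pushforwardgeneral}.
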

%

We can also deduce a torus version of Kapranov's theorem as a fairly immediate corollary.
\begin{corollary} \label{cor:torus+kapranov}
    Let $f\colon \HH_1 \rightarrow \HH_2$ be a relatively algebraically closed homomorphism and $p \in \HH_1[X_1^\pm, \dots, X_n^\pm]$ a Laurent polynomial.
    Then
    \[
    V^\times(f_*(p)) = f(V^\times(p)) \, .
    \]
\end{corollary}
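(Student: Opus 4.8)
The plan is to deduce the torus statement from the affine one (Theorem~\ref{thm:Kapranov}) by passing through the affinization, exactly as Lemma~\ref{lem:torus+as+proj} realises torus prevarieties inside affine space. The first observation I would record is that affinization commutes with push-forward. Writing $p = \bigboxplus_{\bd \in D} c_\bd \odot \bX^\bd$, the shift $\be$ defining $p_{\aff} = \bX^\be \odot p$ depends only on the support $D$; since $f$ sends no nonzero coefficient to $\0_2$ (because $\0_1$ is the unique preimage of $\0_2$), the support of $f_*(p)$ is again $D$, so $(f_*(p))_{\aff} = f_*(p_{\aff})$.

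Granting this, Lemma~\ref{lem:torus+as+proj} applied on both sides gives
\[
V^\times(f_*(p)) = V\big((f_*(p))_{\aff}\big) \cap (\HH_2^\times)^n = V\big(f_*(p_{\aff})\big) \cap (\HH_2^\times)^n ,
\]
and since $p_{\aff} \in \HH_1[X_1, \dots, X_n]$ is an honest polynomial, Theorem~\ref{thm:Kapranov} yields $V(f_*(p_{\aff})) = f(V(p_{\aff}))$. Hence $V^\times(f_*(p)) = f(V(p_{\aff})) \cap (\HH_2^\times)^n$, and it remains to identify this with $f(V^\times(p)) = f\big(V(p_{\aff}) \cap (\HH_1^\times)^n\big)$.

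For that last equality the inclusion $\supseteq$ is immediate, since a hyperfield homomorphism maps $\HH_1^\times$ into $\HH_2^\times$. For $\subseteq$, take $\ba \in V(p_{\aff})$ with $f(\ba) \in (\HH_2^\times)^n$; because $\0_1$ is the unique preimage of $\0_2$ under $f$, every coordinate of $\ba$ is nonzero, so $\ba \in V(p_{\aff}) \cap (\HH_1^\times)^n = V^\times(p)$ and therefore $f(\ba) \in f(V^\times(p))$.

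I do not expect a genuine obstacle. The only delicate bookkeeping is that affinization and push-forward commute, which I would isolate as a one-line preliminary remark; everything else is a formal consequence of Lemma~\ref{lem:torus+as+proj}, Theorem~\ref{thm:Kapranov}, and the injectivity of $f$ on the zero locus. As an alternative one could argue directly — the containment $f(V^\times(p)) \subseteq V^\times(f_*(p))$ is the torus restriction of Lemma~\ref{pushforwardgeneral}, and the reverse containment repeats the monomial pull-back argument from the proof of Theorem~\ref{thm:Kapranov} with the substitution maps $\phi_D, \psi_D$ restricted to the torus — but routing through the affinization is cleaner.
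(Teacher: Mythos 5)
Your proof is correct and takes essentially the same approach as the paper, with one minor routing difference: the paper passes through the projective Corollary~\ref{cor:proj+kapranov} and the torus chart $U^\times$, while you apply the affine Theorem~\ref{thm:Kapranov} directly to $p_{\aff}$ and intersect with $(\HH^\times)^n$; both hinge on the identical observation that $f$ respects the torus chart in both directions because $\0_1$ is the unique preimage of $\0_2$. Your explicit remark that affinization commutes with push-forward (via $f$ preserving the support $D$) makes precise a step the paper leaves implicit, which is a small but worthwhile clarification.
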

\begin{proof}
We will utilise the projective case from Corollary~\ref{cor:proj+kapranov}, restricted to the torus charts $U^\times_{\HH_1}$ and $U^\times_{\HH_2}$ using Lemma~\ref{lem:torus+as+proj}.
%
From this, we obtain
\[
f(V^\times(p)) = f(PV(\overbar{p_{\aff}}) \cap U^\times_{\HH_1}) = f(PV(\overbar{p_{\aff}})) \cap U^\times_{\HH_2} = PV(f_*(\overbar{p_{\aff}})) \cap U^\times_{\HH_2} = V^\times(f_*(p)) \, .
\]
\end{proof}

\subsection{Extending the fundamental theorem of tropical geometry to hyperfields}\label{sec:funda_thm}

In tropical geometry, Kapranov's theorem can be extended to the fundamental theorem by considering varieties defined by polynomial ideals, rather than hypersurfaces defined by a single polynomial.
The aim of this section is to present a parallel picture for RAC hyperfield homomorphisms.
However, there are major deficiencies with polynomial ideals in the hyperfield setting as laid out in Remarks~\ref{rem:hyperpoly}, \ref{rem:hyperideal} and \ref{rem:hyperiedal+restrictive}.
As such, we restrict our setting to RAC maps from fields where polynomial ideals are well behaved with respect to algebraic varieties.

\begin{theorem}\label{thm:fund+thm+proj}
    Let $f: \FF \rightarrow \HH$ be a relatively algebraically closed homomorphism from an algebraically closed field $\FF$ to a hyperfield $\HH$.
    Then, for any homogeneous ideal $I \subseteq \FF[X_0,X_1 \dots X_n]$, 
    \[
    f(PV(I)) = PV(f_*(I)) \, .
    \]
\end{theorem}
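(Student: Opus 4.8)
The plan is to mirror the classical deduction of the fundamental theorem from Kapranov's theorem, adapted to the hyperfield target. The containment $f(PV(I)) \subseteq PV(f_*(I))$ is immediate: if $\ba \in PV(I)$ then for every $p \in I$ we have $\ba \in PV(p)$, so by the pushforward containment (the projective analogue of Lemma~\ref{pushforwardgeneral}, as in the first paragraph of the proof of Corollary~\ref{cor:proj+kapranov}) we get $f(\ba) \in PV(f_*(p))$; intersecting over $p \in I$ gives $f(\ba) \in PV(f_*(I))$. So the work is entirely in the reverse containment $PV(f_*(I)) \subseteq f(PV(I))$.

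For the reverse containment, fix $\bb \in PV(f_*(I))$, so $\bb$ is a root of $f_*(p)$ for every $p \in I$. First I would handle the torus/affine case and then patch to projective space, or work directly with the homogeneous coordinate ring; let me describe the affine-style argument since it is cleaner and the projective statement follows by the homogenization bookkeeping already set up in Lemma~\ref{lem:aff+as+proj} and Corollary~\ref{cor:proj+kapranov}. Pick a preimage point $\blambda$ with $f(\blambda) = \bb$ coordinatewise (possible since $f$ is surjective), and consider the ``twisted'' ideal $I' = \{\, p(\lambda_0 X_0, \dots, \lambda_n X_n) : p \in I \,\}$, which is again a homogeneous ideal of $\FF[X_0, \dots, X_n]$ because $\FF$ is a field and scaling variables is a ring automorphism. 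Then $\bb$ being a root of $f_*(p)$ for all $p \in I$ translates, exactly as in the proof of Theorem~\ref{thm:Kapranov} via the commuting square with the maps $\phi_D, \psi_D$, into the statement that $\1 = (\1, \dots, \1)$ is a root of $f_*(p')$ for all $p' \in I'$. The heart of the matter is then: if $\1 \in \HH^{n+1}$ is a common root of $f_*(I')$, produce a common root of $I'$ over $\FF$ lying in $f^{-1}(\1)$.

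The main obstacle — and the step I would spend the most care on — is precisely this last ``simultaneous lifting'' step: Kapranov's theorem (Theorem~\ref{thm:Kapranov}) lets us lift a root of a single pushed-forward polynomial, but we need a single point over $\FF$ that simultaneously solves every $p' \in I'$ while staying in the fibre $f^{-1}(\1)$. Over fields this is handled by the Nullstellensatz / Gröbner-basis elimination argument (e.g.\ the ``one variable at a time'' extension argument in the proof of the fundamental theorem in \cite{MS:21}): one eliminates variables, uses that $\FF$ is algebraically closed to extend a partial solution, and at each stage invokes the single-polynomial lifting (Kapranov) over the hyperfield to guarantee the relevant pushed-forward univariate polynomial has the right root in the prescribed fibre. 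I would carry this out by induction on $n$, letting $I'_k$ denote the elimination ideal $I' \cap \FF[X_0, \dots, X_k]$, maintaining the inductive hypothesis that $(\1, \dots, \1) \in \HH^{k+1}$ is a common root of $f_*(I'_k)$ and that it lifts to a common root of $I'_k$ over $\FF$ inside the appropriate fibre; the extension step from $k$ to $k+1$ is where Theorem~\ref{thm:Kapranov} (applied to a suitable generator of $I'_{k+1}$ viewed as a one-variable polynomial over $\FF(X_0,\dots,X_k)$, after clearing denominators) does the work, together with algebraic closedness of $\FF$ to know the partial solution extends at all. Finally, untwisting by $\blambda$ and re-homogenizing converts the common root of $I'$ in $f^{-1}(\1)$ back to a point of $PV(I)$ mapping to $\bb$, completing the reverse containment and hence the theorem.
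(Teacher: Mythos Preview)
Your twist reducing to $\bb = (\1,\dots,\1)$ is fine, and you correctly identify the simultaneous-lifting step as the heart of the matter. The gap is in the proposed resolution via elimination ideals. In the extension step from $k$ to $k+1$, suppose inductively you have $(a_0,\dots,a_k) \in V(I'_k)$ with each $f(a_i) = \1_\HH$. You propose to apply Theorem~\ref{thm:Kapranov} to ``a suitable generator of $I'_{k+1}$ viewed as a one-variable polynomial over $\FF(X_0,\dots,X_k)$''. But $I'_{k+1} \subseteq \FF[X_0,\dots,X_{k+1}]$ is not principal in general; even if you pass to the extension $I'_{k+1}\cdot\FF(X_0,\dots,X_k)[X_{k+1}]$, which \emph{is} principal, its generator after clearing denominators need not lie in $I'_{k+1}$, and its roots after specialising $X_i \mapsto a_i$ need not annihilate the rest of $I'_{k+1}$. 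Concretely: Kapranov applied to a single $g \in I'_{k+1}$ produces an $a_{k+1}$ in the fibre $f^{-1}(\1_\HH)$ with $g(a_0,\dots,a_{k+1}) = 0$, but gives no control over the other generators of $I'_{k+1}$; conversely, the algebraic Extension Theorem tells you which $(a_0,\dots,a_k)$ extend to $V(I'_{k+1})$ at all---those avoiding the common vanishing of certain leading coefficients---and nothing in your induction guarantees your particular lift is among them, nor that among the algebraic extensions any lies in $f^{-1}(\1_\HH)$. These two constraints (annihilate all of $I'_{k+1}$; land in the prescribed fibre) do not collapse to a single univariate Kapranov application, so the inductive step does not close.

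The paper sidesteps this by arguing contrapositively and inducting on $\dim PV(I)$ rather than on the number of variables: given $\by \notin f(PV(I))$, it manufactures a single $\hat p \in I$ with $\by \notin PV(f_*(\hat p))$. In the zero-dimensional base case $\hat p$ is built explicitly as a power of a product of linear forms separating every $\bw \in f^{-1}(\by)$ from the finitely many points of $PV(I)$, pushed into $I$ via the Nullstellensatz; in the inductive step (for $I$ prime) one slices $PV(I)$ by a hypersurface $PV(q)$ through a chosen $\bw \in f^{-1}(\by)$ with $q \notin I$ to drop dimension, applies the inductive hypothesis to $I + \langle q \rangle$, and then strips off the $q$-contribution to land back in $I$. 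Only the single-polynomial Kapranov theorem is ever invoked, so the simultaneous-lifting obstacle never arises.
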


As with Kapranov's theorem, one containment follows directly from hyperfield homomorphism properties, and doesn't require any (relatively) algebraically closed assumptions.
Moreover, we will show the affine case, as the projective and torus versions of the statement follow immediately.
\begin{lemma}\label{lem:ideal-var-incl}
    Let $f: \FF \rightarrow \HH$ be a homomorphism from a field $\FF$ to a hyperfield $\HH$.
    For any ideal $I \subseteq \FF[X_1, \dots X_n]$, 
    \[  
    f(V(I)) \subseteq V(f_*(I)) \, .
    \]
\end{lemma}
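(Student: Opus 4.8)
The plan is to prove Lemma~\ref{lem:ideal-var-incl} directly from the definitions, and it should be almost immediate. First I would unwind what it means for a point to lie in $f(V(I))$: if $\bb \in f(V(I))$, then there exists $\ba = (a_1, \dots, a_n) \in V(I) \subseteq \FF^n$ with $f(\ba) = (f(a_1), \dots, f(a_n)) = \bb$. Since $\ba \in V(I) = \bigcap_{p \in I} V(p)$, we have $p(\ba) = 0$ for every $p \in I$ (here, over the field $\FF$, evaluation is single-valued, so $V(p) = \{\ba \mid p(\ba) = 0\}$, and we may equivalently write $0 \in p(\ba)$).

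Next I would invoke the pushforward behaviour already recorded in the excerpt just before Lemma~\ref{pushforwardgeneral}, namely $f(p(\ba)) \subseteq f_*(p)(f(\ba))$ for any homomorphism $f$ and polynomial $p$; equivalently, one can cite Lemma~\ref{pushforwardgeneral} itself, which states $f(V(p)) \subseteq V(f_*(p))$ for a single polynomial. Applying this to each $p \in I$: since $\ba \in V(p)$, we get $f(\ba) = \bb \in V(f_*(p))$, i.e. $\0 \in f_*(p)(\bb)$. Because this holds for every $p \in I$, the point $\bb$ satisfies $\bb \in V(f_*(p))$ for all $f_*(p) \in f_*(I)$, which is exactly the condition $\bb \in \bigcap_{q \in f_*(I)} V(q) = V(f_*(I))$.

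Ranging over all $\bb \in f(V(I))$ then yields the desired inclusion $f(V(I)) \subseteq V(f_*(I))$. There is no real obstacle here: the statement is the set-level bookkeeping version of Lemma~\ref{pushforwardgeneral}, extended from a single polynomial to an arbitrary collection (in particular an ideal), and the fact that $I$ is an ideal rather than a mere set of polynomials plays no role in this direction — only $J = f_*(I)$ being a set of polynomials matters, and $V(-)$ of a set is by definition the intersection of the individual hypersurfaces. The only point worth stating carefully is that over a field evaluation is single-valued so the two notions of root ($p(\ba) = 0$ versus $\0 \in p(\ba)$) coincide, ensuring $V(p)$ on the source side means what we expect. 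The genuinely substantive content — the reverse inclusion, which uses that $\FF$ is algebraically closed and $f$ is RAC — is deferred to the main proof of Theorem~\ref{thm:fund+thm+proj} and is not needed for this lemma.
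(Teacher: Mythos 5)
Your proof is correct and follows essentially the same route as the paper: pick $\ba \in V(I)$ mapping to $\bb$, apply Lemma~\ref{pushforwardgeneral} to each $p \in I$, and intersect. The extra remarks (single-valuedness of evaluation over $\FF$; that the ideal structure of $I$ is irrelevant in this direction) are accurate but not needed.
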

\begin{proof}
    Consider $\bb \in f(V(I))$, and pick some $\ba \in f^{-1}(\bb)$ such that $\ba \in V(I)$.
    In particular, we have $\ba \in V(p)$ for all $p \in I$.
    Lemma \ref{pushforwardgeneral} implies that $f(\ba) \in f(V(p)) \subseteq V(f_*(p))$ for all $p \in I$, and hence $\bb \in \bigcap_{p \in I} V(f_*(p)) = V(f_*(I))$.
\end{proof}

The other containment is more involved.
We will use multiple facts from elementary algebraic geometry without proof: we refer to~\cite{CLO} for further details.
We will also make liberal use of Lemma~\ref{pushforwardgeneral} and Theorem~\ref{thm:Kapranov}.

\begin{proof}
    The inclusion $f(PV(I)) \subseteq PV(f_*(I))$ follows from the affine case in Lemma~\ref{lem:ideal-var-incl} and quotienting by scalars.    
    We prove the reverse inclusion via induction on the dimension of the variety $PV(I)$. 
    
    Let $\dim(PV(I)) = 0$, then $PV(I) = \{\ba^{(1)}, \dots \ba^{(l)} \} \subset \PP^{n}(\FF)$ is a finite set of $l$ points.
    Consider any $\by \in \PP^{n}(\HH) \setminus f(PV(I))$, we construct a polynomial in $f_*(I)$ that vanishes on $\{f(\ba^{(1)}), \dots , f(\ba^{(l)}) \}$ but not on $\by$.
    For each $\ba \in PV(I)$, fix $j \in \{0, \dots, n\}$ such that $f(a_j) \neq \0$.
    Then there exists $k \in \{0, \dots ,n \}\setminus j$ such that 
    \[
    \frac{f(a_k)}{f(a_j)} \neq \frac{y_k}{y_j} \quad \Rightarrow \quad \frac{a_k}{a_j} \neq \frac{w_k}{w_j}  \quad \forall \bw \in f^{-1}(\by) \subseteq \PP^n(\FF) \, .
    \]
    Note that these quantities are well defined in projective space.
    Define the linear polynomial $P_\ba = a_j \cdot X_k - a_k \cdot X_j$ and let $P = \prod_{\ba \in PV(I)} P_\ba \in \FF[X_0, \dots, X_n]$.
    This is defined such that $P(\ba) = 0$ for all $\ba \in PV(I)$, but $P(\bw) \neq 0$ for all $\bw \in f^{-1}(\by)$.
    By the Nullstellensatz, there exists $m \in \NN$ such that $P^m \in I$, but still does not vanish on $f^{-1}(\by)$.
    Applying Theorem~\ref{thm:Kapranov}, we see that
    \begin{align*}
        \by = f(\bw) & \notin f(PV(P^m)) = PV(f_*(P^m))  \quad \Rightarrow \quad \by \notin \bigcap_{p \in I}PV(f_*(p)) = PV(f_*(I)) \, .
    \end{align*}
    This completes the base case of $\dim(PV(I)) = 0$.

    We now assume that the claim holds for all varieties of dimension less than $k$, and let $\dim(PV(I)) = k$.
    We will first prove the case where $PV(I)$ is irreducible i.e. $I$ is prime, and consider the reducible case after.
    
    Consider an arbitrary $\by \in \PP^n(\HH) \setminus f(PV(I))$ and fix some element $\bw \in f^{-1}(\by)$ in the preimage with corresponding maximal ideal $\mathfrak{m}_\bw \subseteq \FF[X_0, X_1,\dots, X_n]$.
    As $\mathfrak{m}_\bw$ is maximal and $\dim(PV(I)) > 0$, there exists some $q \in \mathfrak{m}_\bw \setminus I$.
    Geometrically, this is equivalent to $q(\bw) = 0$ but $q$ does not vanish on all of $PV(I)$.
    By \cite[Proposition 9.4.10]{CLO}, we have
    \[
    \dim(PV(I) \cap PV(q)) = \dim(PV(I + \langle q\rangle) = k-1 \, .
    \]
    Applying the inductive hypothesis gives us
    \[
    \by = f(\bw) \notin f(PV(I)) \supseteq f(PV(I + \langle q\rangle)) = \bigcap_{p \in I +\langle q \rangle } PV(f_*(p)) \, .
    \]
    Hence, there exists $p \in I +\langle q \rangle$ such that $\0 \notin f_*(p)(\by)$, and therefore that $p(\bw) \neq 0 $ for all $\bw\in f^{-1}(\by)$.
    By expressing $p$ as a certain combination, we note
    \begin{align*}
    p &= \hat{p} + \lambda q^m \quad , \quad \hat{p} \in I \, , \, \lambda \in \FF[X_0,\dots ,X_n] \, , \, m \in \NN \\
    \Rightarrow \, p(\bw) &= \hat{p}(\bw) + \lambda(\bw) q^m(\bw) = \hat{p}(\bw) \neq 0 \, .
    \end{align*}
    Hence, there exists a polynomial $\hat{p} \in I$ such that $\bw \notin PV(\hat{p})$ for all $\bw \in f^{-1}(\by)$.
    Applying Theorem~\ref{thm:Kapranov} gives us 
    \[
    \by = f(\bw) \notin f(PV(\hat{p})) = PV(f_*(\hat{p})) \quad \Rightarrow \quad \by \notin PV(f_*(\hat{p})) \supseteq \bigcap_{p \in I} PV(f_*(p)) = PV(f_*(I)) \, .
    \]

    Finally, suppose that $PV(I) = \bigcup_{s=1}^r PV(I_s)$ is reducible into irreducible components $PV(I_s)$.
    On each irreducible component, we have $f(PV(I_s)) = PV(f_*(I_s))$.
    Therefore, applying Lemma~\ref{lem:pushforward+reducible} gives us
    \[
    f(PV(I)) = \bigcup_{s=1}^r f(PV(I_s)) = \bigcup_{s=1}^r PV(f_*(I_s)) = PV(f_*(I)) \, .
    \]
    
\end{proof}

\begin{lemma} \label{lem:pushforward+reducible}
    Let $f: \FF \rightarrow \HH$ be a relatively closed hyperfield homomorphism from an algebraically closed field $\FF$ to a hyperfield $\HH$. 
    Let $PV(I)$ be a projective variety with decomposition into irreducible components $PV(I) = PV(I_1) \cup \dots \cup PV(I_r)$.
    Then
    \[
    PV(f_*(I)) = PV(f_*(I_1)) \cup \dots \cup PV(f_*(I_r)) \, .
    \]
\end{lemma}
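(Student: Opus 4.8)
The plan is to prove the two inclusions separately, invoking the projective form of Kapranov's theorem (Corollary~\ref{cor:proj+kapranov}) only for \emph{single} polynomials, so that we never have to interpret the push-forward of a product of polynomials. The inclusion $PV(f_*(I_1)) \cup \dots \cup PV(f_*(I_r)) \subseteq PV(f_*(I))$ is the easy one: since $PV(I_s) \subseteq PV(I)$, the ideal $I_s$ is a minimal prime over $I$, so $I \subseteq I_s$; applying $f_*$ gives $f_*(I) \subseteq f_*(I_s)$, and since $PV$ of a larger set of polynomials is a smaller prevariety we get $PV(f_*(I_s)) \subseteq PV(f_*(I))$ for each $s$, and then take the union over $s$.

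For the reverse inclusion, I would argue by contradiction. Suppose $\by \in PV(f_*(I))$ but $\by \notin PV(f_*(I_s))$ for every $s$. As each $I_s$ is homogeneous, for each $s$ pick a homogeneous polynomial $p_s \in I_s$ with $\by \notin PV(f_*(p_s))$. Set $q := p_1 \cdots p_r \in \FF[X_0, \dots, X_n]$; this is homogeneous and vanishes on each $PV(I_s)$, hence on $PV(I) = \bigcup_s PV(I_s)$, so by the Nullstellensatz $q^m \in I$ for some $m \in \NN$. Over the field $\FF$ a product of polynomials vanishes exactly where one of its factors does, so $PV(q^m) = PV(q) = \bigcup_{s=1}^r PV(p_s)$. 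Applying Corollary~\ref{cor:proj+kapranov} to $q^m$ and to each $p_s$ then yields
\[
PV(f_*(q^m)) = f(PV(q^m)) = \bigcup_{s=1}^r f(PV(p_s)) = \bigcup_{s=1}^r PV(f_*(p_s)) \, .
\]
By hypothesis $\by$ lies in none of the sets on the right, so $\by \notin PV(f_*(q^m))$; but $q^m \in I$ forces $PV(f_*(I)) \subseteq PV(f_*(q^m))$, contradicting $\by \in PV(f_*(I))$. Hence $\by$ lies in some $PV(f_*(I_s))$, which closes the argument.

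There is no deep obstacle here, but the one thing to watch is that $\HH[X_0, \dots, X_n]$ carries no well-behaved multiplication (Remark~\ref{rem:hyperpoly}), so an identity like ``$f_*(p_1 \cdots p_r) = f_*(p_1) \cdots f_*(p_r)$'' is not even meaningful. The whole point of the argument is therefore to carry out all of the product/union bookkeeping downstairs in $\FF[X_0, \dots, X_n]$ and to push forward only the single polynomial $q^m$, so that Corollary~\ref{cor:proj+kapranov} applies verbatim.
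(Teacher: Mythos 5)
Your proof is correct and follows essentially the same route as the paper's: the easy inclusion via $I \subseteq I_s$, and for the other direction, choosing $p_s \in I_s$ whose push-forward misses $\by$, forming the product, invoking the Nullstellensatz to get a power in $I$, and applying the projective Kapranov theorem to that single polynomial. The only cosmetic differences are that the paper proves the contrapositive directly rather than arguing by contradiction, and it appeals to Lemma~\ref{pushforwardgeneral} (rather than the full Corollary~\ref{cor:proj+kapranov}) to conclude $p_s(\bw) \neq 0$ on preimages, but the mathematical content and the reason for keeping the product bookkeeping over $\FF$ are identical.
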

\begin{proof}
    Assume $\by \in PV(f_*(I_s))$ for some $s \in [r]$, then $f_*(p)(\by) \ni \0$ for all $p \in I_s$.
    As $I \subseteq I_s$, it immediately follows $\by \in PV(f_*(I))$.

    Conversely, if $\by \notin \bigcup_{s=1}^r PV(f_*(I_s))$, then for each $s \in [r]$ there exists some $p_s \in I_s$ such that $f_*(p_s)(\by) \notni \0$.
    This implies that $p_s(\bw) \neq 0$ for all $\bw \in f^{-1}(\by)$.
    Let $p = \prod_{i=1}^r p_s \in \bigcap_{s=1}^r I_s$: this implies $p$ is contained in the radical of $I$, and therefore, there exists $m \in \NN$ such that $p^m \in I$.
    Moreover, $p^m(\bw) \neq 0$ for all preimages $\bw$, and so applying Kapranov's Theorem gives
    \[
    \by = f(\bw) \notin f(PV(p^m)) = PV(f_*(p^m)) \supseteq PV(f_*(I)) \, .
    \]
\end{proof}

We get analogous statements for affine varieties and torus subvarieties by applying Lemmas~\ref{lem:aff+as+proj} and~\ref{lem:torus+as+proj} respectively.
\begin{corollary}
    Let $f: \FF \rightarrow \HH$ be a relatively algebraically closed homomorphism from an algebraically closed field $\FF$ to a hyperfield $\HH$.
    Then, for any ideal $I \subseteq \FF[X_1 \dots X_n]$, 
    \[
    f(V(I)) = V(f_*(I)) \, .
    \]
    Moreover, for any Laurent ideal $J \subseteq \FF[X_1^\pm, \dots, X_n^\pm]$,
    \[
    f(V^\times(J)) = V^\times(f_*(J)) \, .
    \]
\end{corollary}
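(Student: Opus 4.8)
The plan is to bootstrap both equalities from the projective Fundamental theorem (Theorem~\ref{thm:fund+thm+proj}) by the same chart-chasing device already used to deduce Corollary~\ref{cor:torus+kapranov} from Corollary~\ref{cor:proj+kapranov}. Two observations make this run smoothly: (i) $f$ sends only $\0$ to $\0$, so it maps the affine chart $U_\FF$ into $U_\HH$ and the torus into the torus, and \emph{reflects} both (any preimage of a chart, resp.\ torus, point lies in the chart, resp.\ torus); and (ii) the pushforward $f_*$ acts only on coefficients, so it preserves the support and the degree of a polynomial (as $f$ sends $\FF^\times$ into $\HH^\times$) and commutes with homogenization, dehomogenization, and multiplication by monomials. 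One containment is free throughout: $f(V(I)) \subseteq V(f_*(I))$ is Lemma~\ref{lem:ideal-var-incl}, and $f(V^\times(J)) \subseteq V^\times(f_*(J))$ follows identically.

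For the affine case, given an ideal $I \subseteq \FF[X_1,\dots,X_n]$ I would pass to its homogenization ideal $I^h = \langle\, \overbar p \mid p \in I \,\rangle \subseteq \FF[X_0,\dots,X_n]$, which is homogeneous, and use the standard identification $V(I) = PV(I^h) \cap U_\FF$ (equivalently, $PV(I^h) = PV(\overbar I)$ over $\FF$, so this is Lemma~\ref{lem:aff+as+proj}). Applying Theorem~\ref{thm:fund+thm+proj} and transporting the chart intersection through $f$ via (i) gives
\[
f(V(I)) = f\bigl(PV(I^h) \cap U_\FF\bigr) = f(PV(I^h)) \cap U_\HH = PV(f_*(I^h)) \cap U_\HH .
\]
It then remains to identify the right-hand side with $V(f_*(I))$. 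The inclusion $\subseteq$ is immediate by dehomogenizing at $X_0 = \1$, since $\overbar p \in I^h$ for every $p \in I$. For $\supseteq$, one takes an arbitrary homogeneous $q \in I^h$, observes that its dehomogenization $r = q(\1,X_1,\dots,X_n)$ lies in $I$ and that $q = X_0^{k}\odot\overbar r$ with $k = \deg q - \deg r \ge 0$, and uses (ii) to rewrite $f_*(q) = X_0^{k}\odot\overline{f_*(r)}$; evaluating at $(\1,\bb)$ for $\bb \in V(f_*(I))$ then shows $\0 \in f_*(q)(\1,\bb)$.

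For the torus case, given a Laurent ideal $J \subseteq \FF[X_1^\pm,\dots,X_n^\pm]$ I would work with its contraction $J^c = J \cap \FF[X_1,\dots,X_n]$, an honest polynomial ideal satisfying $V^\times(J) = V(J^c) \cap (\FF^\times)^n$ (because $J = J^c\cdot\FF[X_1^\pm,\dots,X_n^\pm]$, so every element of $J$ is a monomial unit times an element of $J^c$). Applying the affine case to $J^c$, intersecting with the torus, and moving the intersection through $f$ by (i), yields $f(V^\times(J)) = V(f_*(J^c)) \cap (\HH^\times)^n$; and by (ii) — monomial units do not change the torus zero locus over $\HH$ either — this equals $V^\times(f_*(J))$. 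Alternatively one can run the homogenization argument of the affine case after affinizing, via Lemma~\ref{lem:torus+as+proj}.

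The main obstacle is bookkeeping rather than conceptual: Lemmas~\ref{lem:aff+as+proj} and~\ref{lem:torus+as+proj} express $V(\cdot)$ and $V^\times(\cdot)$ through \emph{collections} of homogenizations/affinizations, whereas Theorem~\ref{thm:fund+thm+proj} needs a genuine homogeneous \emph{ideal}, and the sets $\overbar I$ and $J_{\aff}$ are not ideals. Bridging this gap forces one to choose the correct honest ideals ($I^h$ and $J^c$) and to verify — over $\FF$, and, more delicately, after pushforward to $\HH$, where $f_*$ is blind to the ideal structure — that the replacement does not alter the relevant affine or torus zero locus. The verification over $\HH$ is exactly where property (ii) does the work, and is the only place where any real care is needed.
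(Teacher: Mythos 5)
Your proof is correct and follows the paper's own strategy of restricting the projective Fundamental theorem (Theorem~\ref{thm:fund+thm+proj}) to the affine and torus charts, but it is in fact more careful than the paper's proof on exactly the point you flag.  The paper's chain of equalities $f(V(I)) = f(PV(\overbar{I})\cap U_\FF) = f(PV(\overbar{I}))\cap U_\HH = PV(f_*(\overbar{I}))\cap U_\HH = V(f_*(I))$ applies Theorem~\ref{thm:fund+thm+proj} to $\overbar{I}$, which by the paper's own definition is the \emph{set} $\{\overbar{p}\mid p\in I\}$ rather than a homogeneous ideal, so the theorem does not literally apply; over $\FF$ this costs nothing since $PV(\overbar{I})=PV(I^h)$, but after pushforward to $\HH$ the inclusion $PV(f_*(I^h))\subseteq PV(f_*(\overbar{I}))$ is a priori strict.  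Your replacement of $\overbar{I}$ by the honest homogenization ideal $I^h$, together with the observation that every homogeneous $q\in I^h$ factors as $X_0^k\odot\overbar{r}$ with $r=q(\1,X_1,\dots,X_n)\in I$ — and that $f_*$, acting only on coefficients, respects this factorisation — is exactly what closes that gap and identifies $PV(f_*(I^h))\cap U_\HH$ with $V(f_*(I))$.

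For the torus case you take a mildly different route: the paper stays projective via $\overbar{J_{\aff}}$ and Lemma~\ref{lem:torus+as+proj}, whereas you pass through the affine case using the contraction $J^c = J\cap\FF[X_1,\dots,X_n]$ and then restrict to the torus.  Both are valid; your route has the advantage of needing only the already-established affine case plus the easy fact that multiplication by a monomial unit $\bX^{-\be}$ does not change the zero locus on the torus, either over $\FF$ or after pushforward.  Your remark that you could instead affinize and homogenize as the paper does is also correct.  In short: same conceptual approach, with the ideal-versus-set subtlety made explicit where the paper leaves it implicit.
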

\begin{proof}
We denote the affine chart over $\FF$ and $\HH$ as $U_\FF$ and $U_\HH$ respectively.
By analogous arguments to those from Corollary~\ref{cor:torus+kapranov}, observe that
\[
f(PV(\overbar{I}) \cap U_\FF) = f(PV(\overbar{I})) \cap U_\HH \quad , \quad f(PV(\overbar{J_{\aff}}) \cap U^\times_\FF) = f(PV(\overbar{J_{\aff}})) \cap U^\times_\HH \, .
\]
Combining Lemma~\ref{lem:aff+as+proj} and Theorem~\ref{thm:fund+thm+proj} with this observation, we see
    \[
    f(V(I)) = f(PV(\overbar{I}) \cap U_\FF) = f(PV(\overbar{I})) \cap U_\HH = PV(f_*(\overbar{I})) \cap U_\HH = V(f_*(I)) \, .
    \]
    Similarly, combining Lemma~\ref{lem:torus+as+proj} and Theorem~\ref{thm:fund+thm+proj} with the above observation gives the analogous statement for torus subvarieties,
    \[
    f(V^\times(J)) = f(PV(\overbar{J_{\aff}}) \cap U^\times_\FF) = f(PV(\overbar{J_{\aff}})) \cap U^\times_\HH = PV(f_*(\overbar{J_{\aff}})) \cap U^\times_\HH = V^\times(f_*(J)) \, .
    \]
\end{proof}

\section{Fine tropical varieties}\label{sec:fine+tropical+varieties}

Let $(\FF,\val)$ be an algebraically closed valued field and $I \subseteq \FF[X_1, \dots, X_n]$ an ideal.
The fundamental theorem of tropical geometry states we can define the corresponding \emph{tropical variety} $\trop(V(I))$ as either $\val(V(I))$, the image of the algebraic variety in the valuation map, or as $V(\val_*(I))$, the hyperfield variety determined by the induced ideal over $\TT$.
For certain valued fields, we can consider a fine valuation map that recalls more information and is relatively algebraically closed by Corollary~\ref{cor:stringent+rac}.
Moreover, Theorem~\ref{thm:fund+thm+proj} gives us a natural notion of a \emph{fine tropical variety} derived from this fine valuation.
In this section, we introduce fine tropical varieties and motivate them as an avenue of further study.

Recall from Examples~\ref{ex:hahn+series} and~\ref{ex:fine+val} the field of Hahn series $\hseries{\FF}{\Gamma}$.
We will restrict ourselves to $\hseries{\CC}{\RR}$ from now on as this is the most natural setting for tropical geometry, but the following will hold for any algebraically closed field $\FF$ and divisible ordered abelian group $\Gamma$.
Recall that $\hseries{\CC}{\RR}$ comes with the fine valuation map
    \begin{align*}
        \fval\colon \hseries{\CC}{\RR} &\rightarrow \CC\rtimes \RR \\ \rho := \sum_{g \in G} c_g t^g &\mapsto (c_\gamma , \gamma) \, , \, \gamma = \min(g \mid g \in G) \, ,
    \end{align*}
that forgets everything except the leading term of a Hahn series.
We denote the leading coefficient of $\rho$ by $\lc(\rho) := c_\gamma$.
We will utilise that the fine valuation of a non-zero Hahn series $\rho$ can be written as $\fval(\rho) = (\lc(\rho), \val(\rho))$.

\begin{definition}\label{def:fine+trop+variety}
Let $I \subseteq \hseries{\CC}{\RR}[X_1, \dots, X_n]$ be an ideal.
The associated (affine) \emph{fine tropical variety} is
\[
\ftrop(V(I)) = \fval(V(I)) = V(\fval_*(I)) \subseteq (\CC \rtimes \RR)^n \, .
\]
The projective and torus analogues are defined similarly.
\end{definition}

The Fundamental theorem of tropical geometry~\cite[Theorem 3.2.5]{MS:21} gives a third description of a tropical variety via Gr\"obner theory.
While this is not possible for enriched valuations in general, we can give a description of a fine tropical variety this way.
We shall restrict ourselves to subvarieties of the torus for the remainder of this section to simplify exposition and highlight the parallels with ordinary tropical geometry.

We very briefly recall some Gr\"obner theory over the valued field $\hseries{\CC}{\RR}$, see~\cite{MS:21} for derivations of definitions over general valued fields.
The residue field of $\hseries{\CC}{\RR}$ is $\CC$, where the representative of the Hahn series $\rho$ is its leading coefficient $\lc(\rho)$.
As such, given some Laurent polynomial $p = \sum \rho_\bd \cdot \bX^\bd \in \hseries{\CC}{\RR}[X_1^\pm , \dots, X_n^\pm]$, we define its \emph{initial form} with respect to $\bu \in \RR^n$ to be the Laurent polynomial
\[
\initial_\bu(p) = \sum_{\bd \in D_{\min}} \lc(\rho_\bd) \cdot \bX^\bd \in \CC[X_1^\pm, \dots, X_n^\pm] \, , \quad D_{\min} := \SetOf{\bd \in \ZZ^n}{\val(\rho_\bd) + \bd\cdot \bu \text{ minimal }} \, .
\]
Given an ideal $I \subseteq\hseries{\CC}{\RR}[X_1^\pm, \dots, X_n^\pm]$, its \emph{initial ideal} $\initial_\bu(I)$ with respect to $\bu \in \RR^n$ is 
\[
\initial_\bu(I) = \langle \initial_\bu(p) \mid p \in I\rangle \subseteq \CC[X_1^\pm, \dots, X_n^\pm] \, ,
\]
the ideal generated by all initial forms of polynomials in $I$.

\begin{theorem}\label{thm:fine+grobner}
    Let $I \subseteq \hseries{\CC}{\RR}[X_1^\pm, \cdots, X_n^\pm]$ be an ideal.
    The fine tropical variety associated to $I$ is
    \[
    \ftrop(V^\times(I)) = \bigcup_{\bu \in \RR^n} \left(V^\times(\initial_\bu(I)) \times \{\bu\}\right) \subseteq (\CC^\times \times \RR)^n \, .
    \]
\end{theorem}
\begin{proof}
    Throughout we will only make use of the definition $\ftrop(V^\times(I)) := V^\times(\fval_*(I))$ of a fine tropical variety, and so write this everywhere.
    
    We first show that for a single polynomial $p = \sum_{\bd \in D} \rho_\bd \cdot \bX^\bd$, we have
    \begin{equation} \label{eq:fine+hypersurface}
    V^\times(\fval_*(p)) = \bigcup_{\bu \in \RR^n} \left(V^\times(\initial_\bu(p)) \times \{\bu\}\right)
    \end{equation}
    Consider some $(\bz,\bu) \in (\CC^\times)^n \times \RR^n \subseteq (\CC \rtimes \RR)^n$.
    Utilising the hyperfield operations of $\CC \rtimes \RR$, we have
    \begin{align*}
    (\bz,\bu) \in V^\times(\fval_*(p)) \, \Leftrightarrow \, \infty \in \bigboxplus_{\bd \in D} \fval(\rho_\bd) \odot (\bz,\bu)^{\odot \bd} \, \Leftrightarrow \, \infty \in \bigboxplus_{\bd \in D_{\min}} \fval(\rho_\bd) \odot (\bz^\bd,\bd\cdot\bu)
    \end{align*}
    where we can restrict the summation to $D_{\min}$ as other terms will not contribute.
    Restricting to the $\CC$-part of this sum, we obtain that
    \begin{align*}
    \infty \in \bigboxplus_{\bd \in D_{\min}} \fval(\rho_\bd) \odot (\bz^\bd,\bd\cdot\bu) \, \Leftrightarrow \, \sum_{\bd \in D_{\min}} \lc(\rho_\bd) \cdot \bz^\bd = 0  \, \Leftrightarrow \, \bz \in V^\times(\initial_\bu(p)) \, ,
    \end{align*}
    giving the equivalence in \eqref{eq:fine+hypersurface}.
    
    We now consider the general case for an ideal $I$.
    Using \eqref{eq:fine+hypersurface}, we have
    \begin{align*}
        V^\times(\fval_*(I)) = \bigcap_{p \in I} V^\times(\fval_*(p)) = \bigcup_{\bu \in \RR^n} \left(\bigcap_{p \in I} V^\times(\initial_\bu(p)) \times \{\bu\}\right) \, .
    \end{align*}
    Hence suffices to show that $V^\times(\initial_\bu(I)) = \bigcap_{p \in I} V^\times(\initial_\bu(p))$.
    This follows from the fact that $\{\initial_\bu(p) \mid p \in I\}$ generates $\initial_\bu(I)$.
\end{proof}

\begin{remark}
    Consider the homomorphism $\omega^\RR \colon \CC \rtimes \RR \rightarrow \KK \rtimes \RR \cong \TT$, the tropical extension of the trivial homomorphism $\omega$ by $\RR$.
    We can view this as a forgetful morphism that forgets the $\CC$-data.
    Moreover, it relates $\fval$ and $\val$ in the following commutative diagram:
    \[
    \begin{tikzcd}[row sep=huge, column sep=large]
    \hseries{\CC}{\RR} \arrow[r, "\fval"] \arrow[rd, "\val"] & \CC \rtimes \RR \ar[d, "\omega^\RR"]\\
    & \KK \rtimes \RR
    \end{tikzcd}
    \]
    Utilising Definition~\ref{def:fine+trop+variety}, the image of a fine tropical variety in $\omega^\RR$ is the underlying tropical variety, i.e.
    \[
    \omega^\RR(\ftrop(V^\times(I)) = \omega^\RR(\fval(V^\times(I)) = \val(V^\times(I)) = \trop(V^\times(I)) \, .
    \]
    As such, Theorem~\ref{thm:fine+grobner} recovers the description of a tropical variety in terms of initial ideals from the Fundamental theorem \cite[Theorem 3.2.5]{MS:21}, namely 
    \[
    \trop(V^\times(I)) = \SetOf{\bu \in \RR^n}{\initial_\bu(I) \neq \langle 1 \rangle} \, .
    \]
    This follows as the variety $V^\times(\initial_\bu(I))$ is non-empty if and only if $\initial_\bu(I) \neq \langle 1 \rangle$.
\end{remark}

\begin{remark} \label{rem:initial+ideal+stable}
    It seems at first that Theorem~\ref{thm:fine+grobner} implies that to compute a fine tropical variety we would have to compute infinitely many initial ideals.
    However, there are only finitely many initial ideals of $I$ that can occur and they are related by a polyhedral complex structure called the \emph{Gr\"obner complex} of $I$.
    Furthermore, the tropical variety $\trop(V^\times(I))$ is a subcomplex of the Gr\"obner complex of $I$, and hence inherits a polyhedral complex structure from it; see~\cite{MS:21} for full details.
    In particular, for any polyhedral cell $\sigma \in \trop(V^\times(I))$ and $\bu, \bv$ in the relative interior of $\sigma$, we have $\initial_\bu(I) = \initial_\bv(I)$.
    As such, the complex variety $V^\times(\initial_\bu(I))$ is invariant as we range over the relative interior of the polyhedral cell $\sigma$.
    Coupled with Theorem~\ref{thm:fine+grobner}, we can view a fine tropical variety as a tropical variety with a complex variety attached to the relative interior of each polyhedral cell.
\end{remark}

\begin{example}\label{ex:fine+variety}
    Consider the polynomial $P = X + Y - 1 \in \hseries{\CC}{\RR}[X^\pm,Y^\pm]$.
    This defines the line in the plane 
    \[
    V^\times(P) = \SetOf{(\rho, 1- \rho)}{\rho \in \hseries{\CC}{\RR}^\times \setminus \{1\}} \subseteq (\hseries{\CC}{\RR}^\times)^2  \, .
    \]
    We will construct its fine tropical variety $\ftrop(V^\times(P))$ in two different ways, following Theorem~\ref{thm:fund+thm+proj}.
    A schematic of $\ftrop(V^\times(P))$ is given in Figure~\ref{fig:fine+line}.
    
    Let $\rho = c_\gamma t^\gamma + c_\beta t^\beta + \cdots$ where $\gamma < \beta < g$ for all $g \in G \setminus \{\gamma,\beta\}$.
    The fine valuation of the point $(\rho, 1- \rho) \in V(P)$ depends entirely on these leading terms, and can be considered a first order approximation of the point:
    \begin{align}\label{eq:fine+line+1}
    \left(\rho, 1- \rho\right) &= \begin{cases}
        (c_\gamma t^\gamma + O(t^\beta) \; , \, 1 + O(t^\gamma)) & \gamma > 0 \\
        (c_\gamma + O(t^\beta) \; , \, 1 - c_\gamma + O(t^\beta)) & \gamma = 0 , c_\gamma \neq 1 \\
        (1 + O(t^\beta) \; , \, -c_\beta t^\beta + O(t^g)) & \gamma = 0 , c_\gamma = 1 \\
        (c_\gamma t^\gamma + O(t^\beta) \; , \, -c_\gamma t^\gamma + O(t^{\min(\beta,0)})) & \gamma < 0 \\
        \end{cases} \nonumber \\
    \Rightarrow \quad \fval\left(\rho, 1- \rho\right) &= 
    \left\lbrace\begin{array}{@{}l@{\quad}l@{\qquad}l@{}}
        \fp{c_\gamma, \gamma}{1,0} & \gamma > 0 & (A) \\
        \fp{c_\gamma, 0}{1 - c_\gamma ,0} & \gamma = 0 , c_\gamma \neq 1 & (B) \\
        \fp{1, 0}{c_\beta ,\beta} & \gamma = 0 , c_\gamma = 1 & (C) \\
        \fp{c_\gamma, \gamma}{-c_\gamma, \gamma} & \gamma < 0 & (D) \\
    \end{array} \right.
    \end{align}
    Ranging over all points in $V^\times(P)$ gives the fine tropical variety $\ftrop(V^\times(P)) = \fval(V^\times(P))$.
    Equation~\eqref{eq:fine+line+1} is labelled to identify cases with Figure~\ref{fig:fine+line} and the components of $V^\times(\fval_*(P))$ given in~\eqref{eq:fine+line+2}.

    Alternatively, we can consider the `fine tropical polynomial' $\fval_*(P) = X \boxplus Y \boxplus (-1,0) \in (\CC \rtimes \RR)[X^\pm, Y^\pm]$.
    The solutions to this polynomial are those where the `complex' component gives a solution when restricted to monomials where the `tropical' component attains the minimum, i.e.
    \begin{equation}\label{eq:fine+line+2}
    \begin{split}
    \0 &\in \fval_*(P)\fp{c_X,g_X}{c_Y,g_Y} \\
    &= (c_X,g_X) \boxplus (c_Y,g_Y) \boxplus (-1,0)
    \end{split}
    \, \Longleftrightarrow \, \begin{cases}
        g_X > g_Y = 0 \, , \, c_Y - 1 = 0 & (A) \\
        g_X = g_Y = 0 \, , \, c_X + c_Y - 1 = 0 & (B) \\
        g_Y > g_X = 0 \, , \, c_X - 1 = 0 & (C) \\
        g_X = g_Y < 0 \, , \, c_X + c_Y = 0 & (D)
    \end{cases}
    \end{equation}
    The set of all points $\fp{c_X,g_X}{c_Y,g_Y} \in (\CC \rtimes \RR)^2$ that satisfy one of these conditions also gives the fine tropical variety $\ftrop(V^\times(P)) = V^\times(\fval_*(P))$.
    Equation~\eqref{eq:fine+line+2} is labelled to identify cases with Figure~\ref{fig:fine+line} and the components of $\fval(V(P))$ given in~\eqref{eq:fine+line+1}.    
    
    Finally, we note that as described in Theorem~\ref{thm:fine+grobner}, the complex hypersurface attached to the point $\bu \in \RR^2$ is the variety $V^\times(\initial_{\bu}(P))$ cut out by the initial form $\initial_{\bu}(P)$.
    Moreover, these initial forms and complex hypersurfaces are constant on the relative interior of the polyhedral cells of $\trop(V^\times(P))$ as described in Remark~\ref{rem:initial+ideal+stable}.
\begin{figure}[ht]
        \centering
        \includegraphics{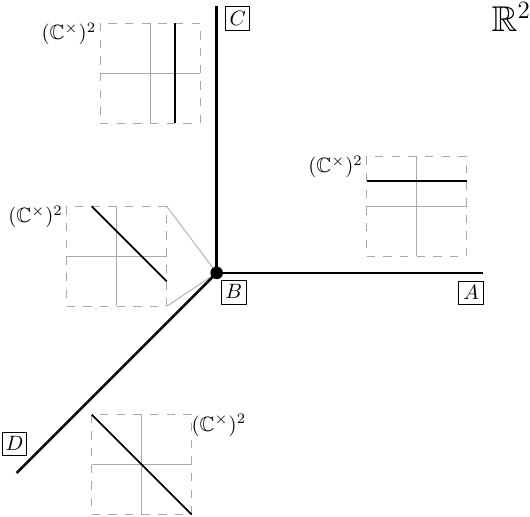}
        \caption{The fine tropical line $\ftrop(V^\times(P))$ from Example~\ref{ex:fine+variety}.
We view the ambient space $(\CC \rtimes \RR)^2$ as $\RR^2$ with a copy of $(\CC^\times)^2$ at each point.
As such, $\ftrop(V^\times(P))$ can be viewed as the usual tropical line in $\RR^2$ with the subvariety of the complex torus $V^\times(\initial_\bu(P))$ attached at each point $\bu \in \RR^2$.
The labels allow identification with the components given in~\eqref{eq:fine+line+1} and~\eqref{eq:fine+line+2}.
}
        \label{fig:fine+line}
    \end{figure}    
\end{example}


\begin{remark}
Recall that the field of Puiseux series $\pseries{\CC}$ also carries a fine valuation whose image is $\CC \rtimes \QQ$.
Hence we can define fine tropical varieties for ideals over the Puiseux series in the space $(\CC \rtimes \QQ)^n$.
For ordinary tropical varieties, one could take the closure in the Euclidean topology to view the variety in $\RR^n$ rather than $\QQ^n$.
However, there are a number of topological concerns when doing this, especially outside of the setting of ordinary tropical geometry: see~\cite{Joswig+Smith:23} for a discussion of pitfalls in the higher rank setting as an example.
As such, we will only view fine tropical varieties over the Puiseux series in the space $(\CC \rtimes \QQ)^n$.
\end{remark}

\begin{remark}
In the last decade, there have been advances in viewing tropical geometry through a scheme-theoretic lens.
The Giansiracusa approach to tropical scheme theory~\cite{GG:2016} defines tropical schemes via congruences of the semiring of tropical polynomials, namely the \emph{bend relations}.
Such congruences give rise to tropical ideals, a special family of ideals of the semiring of tropical polynomials, as detailed in \cite{MR:18}.
Alternative approaches to tropical scheme theory include Lorscheid's theory of ordered blueprints, of which the Giansiracusa tropicalization can be seen as a special case of; see~\cite{Lorscheid:22} for details.

Currently, our definition of a fine tropical variety has no scheme-theoretic analogue.
A natural first approach is to try and generalise the Giansiracusa bend relations to $\CC \rtimes \RR$ and its set of polynomials.
However, the immediate issue is that the bend relations are a semring congruence, and unlike the tropical hyperfield, it is not clear that one can associate a semiring to the hyperfield $\CC \rtimes \RR$.
For example, the tropical semiring is obtained from the tropical hyperfield by defining the single-valued addition 
\[
a \oplus b = 
\begin{cases}
a \boxplus b & a \neq b \\ a & a = b \, .
\end{cases}
\]
Note that the only multi-valued sum we have to alter is the sum of inverses $a \boxplus a = [a, \infty]$, where $a$ is the canonical choice of element as the minimal element.
This construction does not work for $\CC \rtimes \RR$: the sum of additive inverses
\[
(c,g) \boxplus (-c,g) = \{(b,h) \mid b \in \CC^\times \, , \, h \in \RR \, , \, h > g\} \cup \{\0\} \
\]
contains no `minimal' or canonical element to set $(c,g) \oplus (-c,g)$ to.

One may be able to recast the Giansiracusa approach in the language of hyperfields to avoid this transition. 
A positive result towards this is that there is a one-to-one correspondence between congruences and ideals over a hyperring~\cite[Theorem A]{Jun:21}.
However, as described in Remark~\ref{rem:hyperpoly}, the set of polynomials over a hyperfield is not a hyperring, and so its congruences may have many of the same issues as its ideals do; see Remarks~\ref{rem:hyperideal} and \ref{rem:hyperiedal+restrictive}.
Alternatively, one may be able realise fine tropical schemes in the framework of ordered blueprints and Lorscheid tropical scheme theory, though this remains unclear.
\end{remark}

The remainder of this section will discuss some possible applications of fine tropical varieties as motivation for their further study.

\paragraph{Stable intersections}\label{sec:stable}


A fundamental aspect of tropical geometry is that the intersection of two tropical varieties may not be a tropical variety.
Stable intersection turns out to be the correct notion of intersection, where one perturbs the tropical varieties before intersecting them.
Formally, it is defined as
\[
\trop(V(I)) \wedge \trop(V(J)) = \lim_{\epsilon \rightarrow 0} \left(\trop(V(I)) \cap (\trop(V(J)) + \epsilon\bv)\right)
\]
for some generic vector $\bv \in \RR^n$.

In \cite{Joswig+Smith:23}, a novel approach to stable intersection was introduced, where perturbation was performed in a higher rank tropical semiring, intersected set-theoretically and then projected to the usual rank one tropical semiring.
In the following, we provide evidence that a similar paradigm holds by passing to a generic fine tropical variety.

\begin{example}\label{ex:intersection}
Consider the bivariate polynomials
\begin{align*}
P &= X + Y - 1 \, , &
Q &= tX + (1+t^2) Y + 1 \, , & P,Q \in \hseries{\CC}{\RR}[X,Y] \, .
\end{align*}
Their corresponding hypersurfaces $V(P), V(Q)$ are lines in the plane, and so their intersection is a single point, namely 
\[
V(P) \cap V(Q) = \left(\frac{2+t^2}{1-t+t^2}, \frac{-1-t}{1-t+t^2}\right) = \left( 2 + 2t + t^2 + O(t^3), -1 -2t -t^2 + O(t^3) \right) \in \hseries{\CC}{\RR}^2 \, .
\]
Naively, we would expect the intersection of their tropical hypersurfaces to be the single point $(0,0)$.
However, this is not the case as Figure~\ref{fig:stable+intersection} demonstrates: their intersection is a one-dimensional ray, but their stable intersection is the correct point,
\[
\trop(V(P)) \cap \trop(V(Q)) = \{(g, 0) \mid g \geq 0\} \, , \quad  \trop(V(P)) \wedge \trop(V(Q)) = \{(0,0)\} \, .
\]

\begin{figure}
\begin{center}
\includegraphics[width=0.45\textwidth]{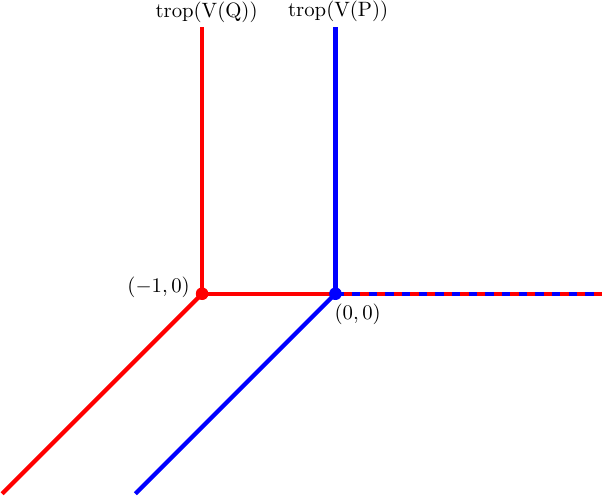} \quad
\includegraphics[width=0.45\textwidth]{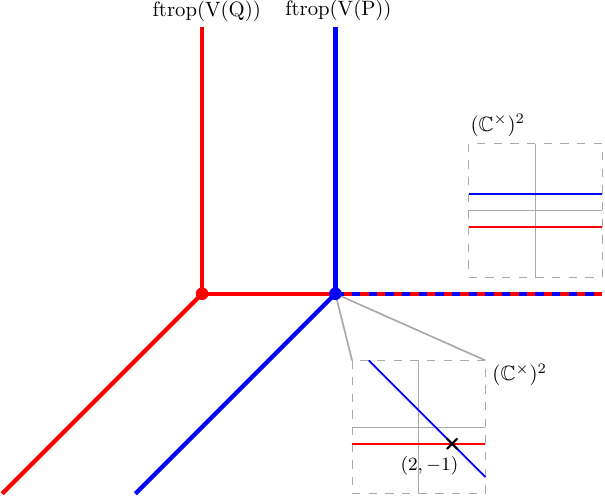}
\end{center}
\caption{Left: The tropical hypersurfaces $\trop(V(P))$ and $\trop(V(Q))$ from Example~\ref{ex:intersection}.
Note that they intersect in a 1-dim ray, but they intersect stably at a single point.\\
Right: The fine tropical hypersurfaces $\ftrop(V(P))$ and $\ftrop(V(Q))$ from the same example.
These do intersect transversally, and so they intersect at a single point.}
\label{fig:stable+intersection}
\end{figure}

Instead let us consider their fine tropical hypersurfaces: $\ftrop(V(P))$ was computed in Example~\ref{ex:fine+variety}, and $\ftrop(V(Q))$ is computed analogously:
\begin{align*}
\ftrop(V(P)) &= \SetOf{\fp{c_X,g_X}{c_Y,g_Y} \in (\CC \rtimes \RR)^2}{
\begin{array}{l}g_X > g_Y = 0 \, , \, c_Y - 1 = 0 \\
g_Y > g_X = 0 \, , \, c_X - 1 = 0 \\
g_X = g_Y < 0 \, , \, c_X + c_Y = 0 \\
g_X = g_Y = 0 \, , \, c_X + c_Y - 1 = 0
\end{array}}\\
\ftrop(V(Q)) &= \SetOf{\fp{c_X,g_X}{c_Y,g_Y} \in (\CC \rtimes \RR)^2}{
\begin{array}{l}
g_X > - 1 \, , \, g_Y = 0 \, , \, c_Y + 1 = 0 \\
g_X = -1 \, , \, g_Y > 0 \, , \, c_X + 1 = 0 \\
g_X + 1 = g_Y < 0 \, , \, c_X + c_Y = 0 \\
g_X +1 = g_Y = 0 \, , \, c_X + c_Y + 1 = 0
\end{array}} 
\end{align*}
It is not hard to check that these two hypersurfaces intersect at a single point $\fp{2,0}{-1,0}$.
Moreover, this point is precisely $\fval(V(P) \cap V(Q))$, and hence its image under the projection $\omega^\RR \colon \CC \rtimes \RR \rightarrow \KK \rtimes \RR \cong \TT$ is the stable intersection of $\trop(V(P))$ and $\trop(V(Q))$, i.e.
\[
\trop(V(P)) \wedge \trop(V(Q)) = \omega^\RR\left(\ftrop(V(P)) \cap \ftrop(V(Q))\right) \, .
\]
Observe from Figure~\ref{fig:stable+intersection} that the two rays that intersected in infinitely many points in $\TT$ no longer intersect at all.
\end{example}

We note that moving to the fine tropical variety does not sidestep the need to perturb from all cases, as we may still encounter non-generic intersections.
However, we do gain two advantages from this perspective,
Firstly, non-generic intersections happen less frequently for fine tropical varieties as $\CC \rtimes \RR$ is in some sense a `bigger' space than $\TT$.
Secondly, if we do need to perturb the fine tropical varieties, we expect it suffices to only perturb in $\CC$ by sufficiently generic complex numbers, and leave the `tropical' part as is.
On the level of Hahn series, this corresponds to perturbing the leading coefficient while leaving the leading exponent be.

We can make this perturbation precise as follows.
Let $P = \sum_{\bd \in D} \rho_\bd \cdot \bX^\bd \in \hseries{\CC}{\RR}[X_1^\pm, \dots, X_n^\pm]$ be a Laurent polynomial with support $D \subseteq \ZZ^n$, i.e. $\rho_\bd \neq 0$ if and only if $\bd \in D$.
Given some $\alpha \in (\CC^\times)^D$, we define the perturbed polynomial $P^\alpha = \sum_{\bd \in D} \alpha_\bd \cdot \rho_\bd \cdot \bX^\bd$.
Note that $\rho_\bd$ and $\alpha_\bd \cdot \rho_\bd$ have the same leading exponents despite being different Hahn series.
As such, the tropical varieties $\trop(V(P))$ and $\trop(V(P^\alpha))$ are equal, while their fine tropical varieties are distinct.

\begin{conjecture} \label{conj:stable+intersection}
    Let $P, Q \in \hseries{\CC}{\RR}[X_1^\pm, \dots, X_n^\pm]$ where $P$ has support $D \subseteq \ZZ^n$.
    Then there exists a Zariski open subset $A \subseteq (\CC^\times)^D$ such that
    \[
    \trop(V(P)) \wedge \trop(V(Q)) = \omega^\RR(\ftrop(V(P^\alpha)) \cap \ftrop(V(Q))) 
    \]
    for all $\alpha \in A$.
\end{conjecture}

\begin{example}
Consider the polynomial $P' = X + Y + 1$, and recall the polynomial $Q = tX + (1+t^2) Y + 1$ from Example~\ref{ex:intersection}.
Despite $P$ and $P'$ having the same tropical variety, they give rise to different fine tropical varieties.
In particular, the fine tropical variety $\ftrop(V(P'))$ associated to $P'$ does not have generic intersection with $\ftrop(V(Q))$:
\[
\ftrop(V(P')) \cap \ftrop(V(Q)) = \SetOf{\fp{c_X,g_X}{-1,0}}{g_X > 0 \, , \, c_X \in \CC^\times} \, .
\]
Moreover, the image of their intersection under $\omega^\RR$ is not equal to the stable intersection $\trop(V(P')) \wedge \trop(V(Q))$.

By picking $\alpha = (1,1,-1) \in (\CC^\times)^3$, we can perturb $P'$ to $(P')^\alpha = P$.
It follows from Example~\ref{ex:intersection} that the intersection $\ftrop(V((P')^\alpha)) \cap \ftrop(V(Q))$ is generic, and that its image under $\omega^\RR$ is equal to the stable intersection $\trop(V(P')) \wedge \trop(V(Q))$.
However, Conjecture~\ref{conj:stable+intersection} claims that almost any $\alpha \in (\CC^\times)^3$ should suffice for this perturbation.
Indeed, it can be checked that any $\alpha \in A$ gives the desired (stable) intersection, where
\[
A = \SetOf{(\alpha_X, \alpha_Y, \alpha_1) \in (\CC^\times)^3}{ \alpha_Y \neq \alpha_1} \, .
\]
\end{example}

\paragraph{Polyhedral homotopies}\label{sec:homotopy}


Polyhedral homotopy continuation is a method for finding solutions to systems of polynomial equations introduced by Huber and Sturmfels~\cite{Huber+Sturmfels:95}: we follow the tropical description in~\cite{Leykin+Yu:19}.
Given a polynomial system $P = (p_1, \dots, p_n) \subset \CC[X_1, \dots, X_n]$ with finitely many solutions, we wish to numerically approximate $V(P)$.
This is done by perturbing the coefficients of $P$ by multiplying through by $t^\gamma$ for various generic $\gamma \in \QQ$.
This gives a perturbed system of equations over the Puiseux series $P_t = (p_1(t), \dots, p_n(t)) \subset \pseries{\CC}[X_1, \dots, X_n]$.
If one can understand the solutions of $P_t$ around $0 < t \ll 1$, then we can `track' these solutions to an approximate solution of the original system $P = P_1$ via homotopy methods.
We note that the method given in~\cite{Leykin+Yu:19} is more general than this, allowing us to find the solutions on a fixed algebraic variety, but we restrict to ordinary case for simplicity.

The method for finding an initial solution of $P_t$ for small $t$ is roughly as follows.
By calculating the tropical variety $\trop(V(P_t))$, we obtain the leading exponents of the Puiseux series solutions.
Moreover, for each $\bu \in \trop(V(P_t))$ one can also recover the leading coefficient of the solution by computing the initial ideal $\initial_\bu(\langle P_t\rangle)$.
Generically, the initial ideal should be zero-dimensional, and so the unique point $\bc \in V^\times(\initial_\bu(\langle P_t\rangle))$ gives us a first-order approximation $(c_1t^{u_1}, \dots, c_nt^{u_n})$ of a solution in $V(P_t)$.
Ranging over all $\bu \in \trop(V(P_t))$ gives us a first-order approximation of our solution set $V(P_t)$ for $t$ close to zero, which in general should be sufficient information to begin homotopy continuation.

By the characterisation given in Theorem~\ref{thm:fine+grobner}, we can reframe this initial solution computation as computing the fine tropical variety $\ftrop(V(P_t))$.
This motivates fine tropical varieties as a natural object of study when computing initial solutions for polyhedral homotopy continuation, and there are two avenues we hope this perspective will be useful.
Firstly, computing the initial solution is a fairly expensive computation, requiring at least a mixed volume computation which is $\#P$-hard, if not a Gr\"obner basis computation which is worse-case doubly exponential.
As such, any algorithmic advantages we can gain from the structure of fine tropical varieties, even in specific instances, would be welcome.
Secondly, Theorem~\ref{thm:fine+grobner} allows us to compute and encode initial solutions of polynomial systems with a positive dimensional solution set in only finite information.
Explicitly, even if $\trop(V(P_t))$ has positive dimension, Remark~\ref{rem:initial+ideal+stable} shows that the initial ideal associated to each point are constant on cells.
As such, we can describe the fine tropical variety $\ftrop(V(P_t))$ in finitely many polyhedral cells, each with a single initial ideal associated.
We hope that this can be utilised alongside higher-dimensional continuation methods~\cite{Sommese+Wampler:2005} to numerically approximate positive-dimensional varieties.

\section{Further questions} \label{Sec:further-qs}
We end with some further questions and directions for study.

Theorem~\ref{thm:fund+thm+proj} gives a fundamental theorem for RAC maps from fields to hyperfields.
As laid out in Remarks~\ref{rem:hyperpoly}, \ref{rem:hyperideal} and \ref{rem:hyperiedal+restrictive}, extending this statement to RAC maps between hyperfields is currently not possible due to no coherent notion of a polynomial ideal that is compatible with algebraic varieties for general hyperfields.
As such, the following question would have to be addressed before on could extend the fundamental theorem further:
\begin{question}
Can one formulate a natural notion of a polynomial ideal $I \subseteq \HH[X_1, \dots, X_n]$ that is compatible with algebraic varieties over certain well-behaved $\HH$?
\end{question}
Over $\TT$, Maclagan and Rinc\'on~\cite{MR:18,MR:22} introduced the notion of a \emph{tropical ideal}, an ideal of the semiring of tropical polynomials which has an additional monomial elimination axiom, similar to vector elimination axioms for matroids that occur for polynomial ideals over $\FF$.
This axiom allows one to consider tropical ideals as compatible `layers' of matroids over $\TT$.
One possible approach is to generalise this notion to a wider family of hyperfields, where $\HH$-ideals are compatible layers of matroids over $\HH$, in the sense of \cite{Baker+Bowler:19}.
There are two issues that would have to be overcome for this approach.
Firstly, not all hyperfields satisfy vector elimination axioms~\cite{LAnderson}, so this technique will not hold in full generality, but perhaps work for natural families of hyperfields.
Secondly, semiring ideals differ quite dramatically from hyperfield ideals: \cite[Example 5.14]{MR:18} demonstrates that arbitrary ideals of the semiring of tropical polynomials are not restrictive enough and can give rise to non-polyhedral tropical varieties.
This is the opposite problem to hyperfield polynomial ideals, where Remark~\ref{rem:hyperiedal+restrictive} shows the naive definition is far too restrictive to be useful.

As another direction of generalisation, we use the relatively algebraically closed property to prove Kapranov's theorem (Theorem~\ref{thm:Kapranov}).
However, this is a stronger property than we may actually require.
As an example, consider the absolute value map from the complex numbers to the \emph{triangle hyperfield}:
\begin{align*}
| \cdot | \colon \CC &\rightarrow \Delta = (\RR_{\geq 0}, \boxplus, \odot) & a \boxplus b &= \SetOf{c}{|a-b| \leq c \leq a+b} \\
z &\mapsto |z| & a \odot b &= a\cdot b 
\end{align*}
This is a hyperfield homomorphism that is not RAC.
However, it was shown in~\cite{Purbhoo:08} that Kapranov's theorem does hold in this setting.
The difference is one cannot just consider the polynomial $p$, but rather the whole principal ideal $\langle p \rangle$, as there are many `higher polynomials' that contain information that $p$ does not.
This motivates the following:
\begin{question}
For which maps $f\colon \FF \rightarrow \HH$ do we have $f(V(\langle p \rangle)) = V(f_*\langle p \rangle)$?
\end{question}
As with Theorem~\ref{thm:fund+thm+proj}, this restricts us to maps from fields until we have a good handle on which hyperfields have well-defined notions of (principal) ideals.

Finally, note that in Theorem~\ref{thm:fund+thm+proj} we do not give any conditions on $\HH$ aside from it is the target of a RAC map from an algebraically closed field.
In practice, all examples of such hyperfields that we know about are stringent.
\begin{question}
Classify all hyperfields $\HH$ that are the target of a relatively algebraically closed map from a field.
Is $\HH$ necessarily stringent?
\end{question}
The connection between RAC maps and the multiplicity bound (Definition~\ref{def:mult+bound}) was investigated in~\cite{Maxwell:21}, in particular to derive some sufficient conditions for a map to be RAC.
However, if the RAC map is from a field, it may be \emph{necessary} that the target hyperfield satisfies the multiplicity bound.
Key examples of non-stringent hyperfields, such as $\PP$ and $\Delta$, are known to exceed the multiplicity bound.
As such, one avenue to investigate this question is to verify whether non-stringent hyperfields exceed the multiplicity bound in general.


\bibliographystyle{plain}

\begin{thebibliography}{10}

\bibitem{Agu+Lor:21}
Alexander Agudelo and Oliver Lorscheid.
\newblock Factorizations of tropical and sign polynomials.
\newblock {\em Indagationes Mathematicae}, 32(4):797--812, 2021.

\bibitem{AGG:09}
Marianne Akian, St{\'e}phane Gaubert, and Alexander Guterman.
\newblock Linear independence over tropical semirings and beyond.
\newblock {\em Contemporary Mathematics}, 495(1):1--38, 2009.

\bibitem{AGG:14}
Marianne Akian, St{\'e}phane Gaubert, and Alexander Guterman.
\newblock Tropical {C}ramer determinants revisited.
\newblock {\em Tropical and idempotent mathematics and applications},
  616:1--45, 2014.

\bibitem{AGR:22}
Marianne Akian, Stephane Gaubert, and Louis Rowen.
\newblock Semiring systems arising from hyperrings.
\newblock {\em Journal of Pure and Applied Algebra}, 228(6):107584, 2024.

\bibitem{AGT:23}
Marianne Akian, St\'ephane Gaubert, and Hanieh Tavakolipour.
\newblock Factorization of polynomials over the symmetrized tropical semiring
  and {D}escartes' rule of sign over ordered valued fields.
\newblock {\em arXiv preprint arXiv:2301.05483}, 2023.

\bibitem{AllamigeonGaubertSkomra:20}
Xavier Allamigeon, St\'{e}phane Gaubert, and Mateusz Skomra.
\newblock Tropical spectrahedra.
\newblock {\em Discrete Comput. Geom.}, 63(3):507--548, 2020.

\bibitem{Ameri+Eyvazi+Hoskova-Mayerova:2019}
Reza Ameri, Mansour Eyvazi, and Sarka Hoskova-Mayerova.
\newblock Superring of polynomials over a hyperring.
\newblock {\em Mathematics}, 7(10), 2019.

\bibitem{LAnderson}
Laura Anderson.
\newblock Vectors of matroids over tracts.
\newblock {\em J. Combin. Theory Ser. A}, 161:236--270, 2019.

\bibitem{AndersonDavis:19}
Laura Anderson and James~F Davis.
\newblock Hyperfield {G}rassmannians.
\newblock {\em Advances in Mathematics}, 341:336--366, 2019.

\bibitem{Baker+Bowler:18}
Matt Baker and Nathan Bowler.
\newblock Matroids over hyperfields.
\newblock In {\em AIP Conference Proceedings}, volume 1978, page 340010. AIP
  Publishing LLC, 2018.

\bibitem{Baker+Bowler:19}
Matthew Baker and Nathan Bowler.
\newblock Matroids over partial hyperstructures.
\newblock {\em Advances in Mathematics}, 343:821--863, 2019.

\bibitem{Baker+Lorscheid:18}
Matthew Baker and Oliver Lorscheid.
\newblock Descartes' rule of signs, {N}ewton polygons, and polynomials over
  hyperfields.
\newblock {\em J. Algebra}, 569:416--441, 2021.

\bibitem{MB}
Milo Bogaard.
\newblock {\em Introduction to amoebas and tropical geometry}.
\newblock PhD thesis, U. Amsterdam, 2015.

\bibitem{BowlerSu:21}
Nathan Bowler and Ting Su.
\newblock Classification of doubly distributive skew hyperfields and stringent
  hypergroups.
\newblock {\em Journal of Algebra}, 574:669--698, 2021.

\bibitem{CLO}
David Cox, John Little, and Donal O'Shea.
\newblock {\em Ideals, varieties, and algorithms: an introduction to
  computational algebraic geometry and commutative algebra}.
\newblock Springer Science \& Business Media, 2013.

\bibitem{deWolff:17}
Timo de~Wolff.
\newblock Amoebas and their tropicalizations -- a survey.
\newblock {\em Analysis Meets Geometry: The Mikael Passare Memorial Volume},
  pages 157--190, 2017.

\bibitem{Dress:86}
Andreas W.~M. Dress.
\newblock Duality theory for finite and infinite matroids with coefficients.
\newblock {\em Adv. in Math.}, 59(2):97--123, 1986.

\bibitem{Forsgaard:15}
Jens Forsg{\aa}rd.
\newblock {\em Tropical aspects of real polynomials and hypergeometric
  functions}.
\newblock PhD thesis, Stockholm University, 2015.

\bibitem{Forsgaard:21}
Jens Forsg{\aa}rd.
\newblock Discriminant amoebas and lopsidedness.
\newblock {\em J. Commut. Algebra}, 13(1):41--60, 2021.

\bibitem{Gaubert:92}
St\'ephane Gaubert.
\newblock {\em Theory of Linear Systems over Dioids}.
\newblock PhD thesis, Thèse de l'Ecole Nationale Supérieure des Mines de
  Paris, 1992.

\bibitem{GG:2016}
Jeffrey Giansiracusa and Noah Giansiracusa.
\newblock Equations of tropical varieties.
\newblock {\em Duke Math. J.}, 165(18):3379--3433, 2016.

\bibitem{Gian+Jun+Lor:17}
Jeffrey Giansiracusa, Jaiung Jun, and Oliver Lorscheid.
\newblock On the relation between hyperrings and fuzzy rings.
\newblock {\em Beitr{\"a}ge zur Algebra und Geometrie/Contributions to Algebra
  and Geometry}, 58:735--764, 2017.

\bibitem{GrossGunn:23}
Andreas Gross and Trevor Gunn.
\newblock Factoring multivariate polynomials over hyperfields and the
  multivariable {D}escartes' problem.
\newblock {\em arXiv preprint arXiv:2307.09400}, 2023.

\bibitem{Gunn:22}
Trevor Gunn.
\newblock Tropical extensions and {B}aker-{L}orscheid multiplicities for
  idylls.
\newblock {\em Communications in Algebra}, pages 1--27, 2024.

\bibitem{Huber+Sturmfels:95}
Birkett Huber and Bernd Sturmfels.
\newblock A polyhedral method for solving sparse polynomial systems.
\newblock {\em Math. Comp.}, 64(212):1541--1555, 1995.

\bibitem{IZH:09}
Zur Izhakian.
\newblock Tropical arithmetic and matrix algebra.
\newblock {\em Communications in Algebra}, 37(4):1445--1468, 2009.

\bibitem{IzhakianKnebuschRowen:14}
Zur Izhakian, Manfred Knebusch, and Louis Rowen.
\newblock Layered tropical mathematics.
\newblock {\em Journal of Algebra}, 416:200--273, 2014.

\bibitem{JSY:22}
Philipp Jell, Claus Scheiderer, and Josephine Yu.
\newblock Real tropicalization and analytification of semialgebraic sets.
\newblock {\em International Mathematics Research Notices}, 2022(2):928--958,
  2022.

\bibitem{Joswig+Smith:23}
Michael Joswig and Ben Smith.
\newblock Convergent {H}ahn series and tropical geometry of higher rank.
\newblock {\em Journal of the London Mathematical Society}, 107(4):1450--1481,
  2023.

\bibitem{JJun}
Jaiung Jun.
\newblock Algebraic geometry over hyperrings.
\newblock {\em Adv. Math.}, 323:142--192, 2018.

\bibitem{Jun:21}
Jaiung Jun.
\newblock Geometry of hyperfields.
\newblock {\em Journal of Algebra}, 569:220--257, 2021.

\bibitem{Kra:56}
Marc Krasner.
\newblock Approximation des corps valu{\'e}s complets de caract{\'e}ristique p=
  0 par ceux de caract{\'e}ristique 0.
\newblock In {\em Colloque d’alg{\`e}bre sup{\'e}rieure, tenu {\`a} Bruxelles
  du}, volume~19, pages 129--206, 1956.

\bibitem{Kra:83}
Marc Krasner.
\newblock A class of hyperrings and hyperfields.
\newblock {\em International Journal of Mathematics and Mathematical Sciences},
  6(2):307--311, 1983.

\bibitem{KLS}
Katarzyna Kuhlmann, Alessandro Linzi, and Hanna Stoja{\l}owska.
\newblock Orderings and valuations in hyperfields.
\newblock {\em J. Algebra}, 611:399--421, 2022.

\bibitem{Leykin+Yu:19}
Anton Leykin and Josephine Yu.
\newblock Beyond polyhedral homotopies.
\newblock {\em J. Symbolic Comput.}, 91:173--180, 2019.

\bibitem{Linzi:23}
Alessandro Linzi.
\newblock Notes on valuation theory for {K}rasner hyperfields.
\newblock {\em arXiv preprint arXiv:2301.08639}, 2023.

\bibitem{Liu:20}
Ziqi Liu.
\newblock Examples on the sharpness of an inequality about multiplicities over
  hyperfields.
\newblock {\em arXiv preprint arXiv:2010.09492}, 2020.

\bibitem{Lorscheid:12}
Oliver Lorscheid.
\newblock The geometry of blueprints: Part {I}: Algebraic background and scheme
  theory.
\newblock {\em Advances in Mathematics}, 229(3):1804--1846, 2012.

\bibitem{Lorscheid:22}
Oliver Lorscheid.
\newblock Tropical geometry over the tropical hyperfield.
\newblock {\em Rocky Mountain Journal of Mathematics}, 52(1):189--222, 2022.

\bibitem{MR:18}
Diane Maclagan and Felipe Rinc\'{o}n.
\newblock Tropical ideals.
\newblock {\em Compos. Math.}, 154(3):640--670, 2018.

\bibitem{MR:22}
Diane Maclagan and Felipe Rinc\'{o}n.
\newblock Varieties of tropical ideals are balanced.
\newblock {\em Adv. Math.}, 410:Paper No. 108713, 44, 2022.

\bibitem{MS:21}
Diane Maclagan and Bernd Sturmfels.
\newblock {\em Introduction to tropical geometry}, volume 161.
\newblock American Mathematical Society, 2021.

\bibitem{Maxwell:Thesis}
James Maxwell.
\newblock {\em Generalisations of tropical geometry over hyperfields}.
\newblock PhD thesis, Swansea University, 2022.

\bibitem{Maxwell:21}
James Maxwell.
\newblock Generalising {K}apranov's theorem for tropical geometry over
  hyperfields.
\newblock {\em Journal of Algebra}, 638:441--464, 2024.

\bibitem{Mikhalkin:05}
Grigory Mikhalkin.
\newblock Enumerative tropical algebraic geometry in {$\Bbb R^2$}.
\newblock {\em J. Amer. Math. Soc.}, 18(2):313--377, 2005.

\bibitem{Nisse+Passare:17}
Mounir Nisse and Mikael Passare.
\newblock Amoebas and coamoebas of linear spaces.
\newblock {\em Analysis Meets Geometry: The Mikael Passare Memorial Volume},
  pages 63--80, 2017.

\bibitem{MaxPlus:90}
Max Plus.
\newblock Linear systems in (max,+) algebra.
\newblock In {\em 29th IEEE Conference on decision and control}, pages
  151--156. IEEE, 1990.

\bibitem{Purbhoo:08}
Kevin Purbhoo.
\newblock A nullstellensatz for amoebas.
\newblock {\em Duke Math. J.}, 141(1):407--445, 2008.

\bibitem{Row:21}
Louis~Halle Rowen.
\newblock Algebras with a negation map.
\newblock {\em European Journal of Mathematics}, pages 1--77, 2021.

\bibitem{Sommese+Wampler:2005}
Andrew~John. Sommese and Charles~Weldon Wampler.
\newblock {\em The numerical solution of systems of polynomials arising in
  engineering and science}.
\newblock World Scientific, Hackensack, NJ, 2005.

\bibitem{Viro:84}
O.~Ya. Viro.
\newblock Gluing of plane real algebraic curves and constructions of curves of
  degrees {$6$} and {$7$}.
\newblock In {\em Topology ({L}eningrad, 1982)}, volume 1060 of {\em Lecture
  Notes in Math.}, pages 187--200. Springer, Berlin, 1984.

\bibitem{Viro:10}
Oleg Viro.
\newblock Hyperfields for tropical geometry {I}. hyperfields and
  dequantization.
\newblock {\em arXiv preprint arXiv:1006.3034}, 2010.

\bibitem{Viro:11}
Oleg Viro.
\newblock On basic concepts of tropical geometry.
\newblock {\em Proceedings of the Steklov Institute of Mathematics},
  273(1):252--282, 2011.

\end{thebibliography}

\end{document}